\newtheorem{theorem}{Theorem}
\theoremstyle{plain}
\newtheorem{acknowledgement}{Acknowledgement}
\newtheorem{corollary}{Corollary}
\newtheorem{definition}{Definition}
\newtheorem{lemma}{Lemma}
\newtheorem{proposition}{Proposition}
\newtheorem{remark}{Remark}
\DeclareMathOperator{\Div}{div}
 \numberwithin{equation}{section}
\begin{document}
\title[Homogenization of reaction-diffusion equation]{Periodic
Homogenization of strongly nonlinear reaction-diffusion equations with large
reaction terms}
\author{Nils Svanstedt}
\address{N. Svanstedt, Department of Mathematical Sciences, G\"{o}teborg
University, SE-412 96 G\"{o}teborg, Sweden}
\email{nilss@math.chalmers.se}
\author{Jean Louis Woukeng}
\address{Jean Louis Woukeng, Department of Mathematics and Computer Science,
University of Dschang, P.O. Box 67, Dschang, Cameroon}
\curraddr{Jean Louis Woukeng, Department of Mathematics and Applied
Mathematics, University of Pretoria, Pretoria 0002, South Africa}
\email{jwoukeng@yahoo.fr}
\date{October, 2011}
\subjclass[2000]{35B27, 76M50}
\keywords{Homogenization, reaction-diffusion, nonlinear}

\begin{abstract}
We study in this paper the periodic homogenization problem related to a
strongly nonlinear reaction-diffusion equation. Owing to the large reaction
term, the homogenized equation has a rather quite different form \ which
puts together both the reaction and convection effects. We show in a special
case that, the homogenized equation is exactly of a convection-diffusion
type. The study relies on a suitable version of the well-known two-scale
convergence method.
\end{abstract}

\maketitle

\section{Introduction}

The aim of this work is the study of the asymptotic behavior of the
solutions of an initial boundary value problem for a strongly nonlinear
reaction-diffusion equation with a large reaction term, in a cylinder. The
equation reads as:
\begin{eqnarray}
\rho \left( \frac{x}{\varepsilon }\right) \frac{\partial u_{\varepsilon }}{%
\partial t} &=&\Div a\left( x,t,\frac{x}{\varepsilon },\frac{t}{\varepsilon
^{k}},Du_{\varepsilon }\right) +\frac{1}{\varepsilon }g\left( \frac{x}{%
\varepsilon },\frac{t}{\varepsilon ^{k}},u_{\varepsilon }\right) \text{\ in }%
Q_{T}  \label{1.1} \\
u_{\varepsilon } &=&0\text{\ on }\partial Q\times \left( 0,T\right)  \notag
\\
u_{\varepsilon }(x,0) &=&u^{0}(x)\in L^{2}(Q)  \notag
\end{eqnarray}%
where $Q_{T}=Q\times \left( 0,T\right) $ is our cylinder and $k$ is a given
positive parameter. The motivation of this study comes essentially from the
applicability of the preceding model. In fact, when the function $%
a(x,t,y,\tau ,\lambda )$ is linear with respect to $\lambda $, that is, $%
a(x,t,y,\tau ,\lambda )=b(x,t,y,\tau )\cdot \lambda $, the unknown $%
u_{\varepsilon }$ may be viewed as the concentration of some chemical
products diffusing in a porous medium of porosity $\rho (y)$, with varying
diffusivity $b(x,t,y,\tau )$ and reacting with the background medium by
absorption/desorption through the term $g(y,\tau ,r)$ \cite{AllPiat1}. The
fact that the diffusivity depends on the macroscopic variables $(x,t)$ means
that the concentration varies locally (and not uniformly) in the medium.
When the diffusivity is nonlinear as it is the case in (\ref{1.1}) and has
the specific form $a(x,t,y,\tau ,\lambda )=b(x,t,y,\tau )\left\vert \lambda
\right\vert ^{p-2}\lambda $, we obtain some model equations of porous media
\cite{Amaziane} (see also \cite{Antontsev}); here $u_{\varepsilon }$ is the
density of the fluid, $\rho (y)$ and $b(x,t,y,\tau )$ are respectively the
porosity and the permeability of the medium.\medskip

To proceed with the study of our model, we apply general ideas of
homogenization \cite{BLP, Jikov} and specifically the framework of two-scale
convergence introduced in \cite{Nguetseng} and developed in \cite{Allaire}.
Although the homogenization process is standard, it has still some
difficulties in our situation. In fact, the diffusion term is nonlinear, and
the lower order term $\frac{1}{\varepsilon }g(x/\varepsilon ,t/\varepsilon
^{k},u_{\varepsilon })$ is large because of the presence of the factor $%
1/\varepsilon $. To avoid obtaining a resulting homogenized equation of
stochastic's type, we assume a centering type condition on the function $g$,
that is the periodic function $(y,\tau )\mapsto g(y,\tau ,r)$ has zero mean
value with respect to the variable $y$, which then allows us to express $g$
as the gradient of a regular function. We also use this condition in both
the a priori estimates and the passage to the limit. This produces a limit
problem of a completely different type, which puts together both the
reaction and convection effects; see Proposition \ref{p5.3}. To be more
precise, here is the main result of the paper (the assumptions are to be
specified later).

\begin{theorem}
Let $2\leq p<\infty $. Assume hypotheses \textbf{A1}-\textbf{A5} hold. For
each $\varepsilon >0$ let $u_{\varepsilon }$ be the unique solution to \emph{%
(\ref{1.1})}. Then there exists a subsequence of $\varepsilon $ not
relabeled such that $u_{\varepsilon }\rightarrow u_{0}$ in $L^{2}(Q_{T})$
where $u_{0}\in L^{p}(0,T;W_{0}^{1,p}(Q))$ is solution to the following
boundary value problem:
\begin{equation}
\left\{
\begin{array}{l}
\frac{\partial u_{0}}{\partial t}=\Div q(\cdot ,\cdot
,u_{0},Du_{0})+q_{0}(\cdot ,\cdot ,u_{0},Du_{0})\text{\ in }Q_{T} \\
u_{0}=0\text{\ on }\partial Q\times (0,T) \\
u_{0}(x,0)=u^{0}(x)\text{\ in }Q\text{.}%
\end{array}%
\right.  \label{1.2}
\end{equation}
\end{theorem}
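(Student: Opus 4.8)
The plan is to follow the classical two-scale convergence strategy, but with the twist needed to handle the large reaction term $\frac1\varepsilon g(x/\varepsilon, t/\varepsilon^k, u_\varepsilon)$.

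First, I would establish uniform a priori estimates. Multiplying \eqref{1.1} by $u_\varepsilon$ and integrating over $Q_T$, the diffusion term gives control of $\|Du_\varepsilon\|_{L^p(Q_T)}$ via the coercivity in \textbf{A?}; the parabolic term gives control of $\sup_t \|\sqrt{\rho(x/\varepsilon)}\,u_\varepsilon(t)\|_{L^2(Q)}$. The delicate point is the term $\frac1\varepsilon\int g(x/\varepsilon,t/\varepsilon^k,u_\varepsilon)u_\varepsilon\,dx\,dt$: here I would use the centering hypothesis $\int_Y g(y,\tau,r)\,dy=0$ to write $g(y,\tau,r)=\operatorname{div}_y G(y,\tau,r)$ for a suitably regular $Y$-periodic $G$, so that $\frac1\varepsilon g(x/\varepsilon,t/\varepsilon^k,u_\varepsilon) = \operatorname{div}_x\!\big[G(x/\varepsilon,t/\varepsilon^k,u_\varepsilon)\big] - G(x/\varepsilon,t/\varepsilon^k,u_\varepsilon)\cdot \partial_r(\cdot)\,Du_\varepsilon$ after a chain-rule computation, turning the singular factor into an $O(1)$ convection-type term that can be absorbed (for $\varepsilon$ small) into the coercive part of the diffusion using Young's inequality and the growth bound on $G$. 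This yields $u_\varepsilon$ bounded in $L^p(0,T;W^{1,p}_0(Q))\cap L^\infty(0,T;L^2(Q))$ and, via the equation, a bound on $\partial_t u_\varepsilon$ in a negative-order space.

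Second, I would extract (along a subsequence) the two-scale limits: $u_\varepsilon \to u_0$ strongly in $L^2(Q_T)$ (by an Aubin–Lions / compensated-compactness argument exploiting the $\partial_t$ bound and the fact that $\rho$ is bounded below), with $u_0$ independent of the fast variables and $u_0\in L^p(0,T;W^{1,p}_0(Q))$; and $Du_\varepsilon$ two-scale converging to $Du_0(x,t) + D_y u_1(x,t,y,\tau)$ for some corrector $u_1\in L^p(Q_T; L^p_\tau(W^{1,p}_{\#}(Y)/\mathbb{R}))$. Strong convergence of $u_\varepsilon$ is what makes the nonlinear terms $a(x,t,y,\tau,\cdot)$ and $g(y,\tau,\cdot)$, $G(y,\tau,\cdot)$ pass to the two-scale limit with $u_\varepsilon$ replaced by $u_0$ inside the nonlinearity; the monotonicity of $\lambda\mapsto a(\cdot,\lambda)$ is then used in a Minty–Browder argument to identify the two-scale limit of $a(x,t,x/\varepsilon,t/\varepsilon^k,Du_\varepsilon)$ with $a(x,t,y,\tau,Du_0+D_yu_1)$.

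Third, I would pass to the limit in the weak formulation tested against $\varphi(x,t)+\varepsilon\varphi_1(x,t,x/\varepsilon,t/\varepsilon^k)$. This produces the two-scale (global plus cell) problem; solving the cell problem for $u_1$ in terms of $(u_0,Du_0)$ and substituting back gives the homogenized coefficients $q(\cdot,\cdot,u_0,Du_0)=\int\!\!\int_{Y\times(0,1)} a(x,t,y,\tau,Du_0+D_yu_1)\,dy\,d\tau$ and the extra zeroth-order term $q_0(\cdot,\cdot,u_0,Du_0)= -\int\!\!\int G(y,\tau,u_0)\cdot \partial_r(\cdots)\,(Du_0+D_yu_1)\,dy\,d\tau$ coming from the reaction term, which is exactly the convection/reaction contribution announced in \eqref{1.2} and Proposition \ref{p5.3}; here the role of the parameter $k$ (whether the fast time $\tau$ is seen, and in what regime) determines the precise averaging in $\tau$. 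The initial condition survives because $\rho(x/\varepsilon)\rightharpoonup \overline\rho:=\int_Y\rho$, after normalizing, and the limit $u_0$ inherits $u^0$ by lower-semicontinuity/continuity in time.

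The main obstacle I expect is the rigorous treatment of the large reaction term: both producing an $\varepsilon$-uniform estimate despite the $1/\varepsilon$ factor (which forces the chain-rule rewriting via the potential $G$ and a careful absorption argument, and relies genuinely on $2\le p$ so that the quadratic test function is admissible), and then correctly tracking how $G$ contributes a non-divergence zeroth-order term $q_0$ to the homogenized equation — in particular checking that the cross terms between the corrector $u_1$ and $G$ do not destroy the monotone structure needed for the Minty argument. A secondary technical point is the scale interaction in time: the hypotheses \textbf{A1}–\textbf{A5} must be strong enough (joint continuity / measurability and periodicity in $\tau$) that two-scale convergence in the $(y,\tau)$ variables is available for the chosen $k$, and the strong $L^2$-compactness in $x,t$ is not spoiled by the fast time oscillation.
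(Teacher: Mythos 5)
Your plan follows essentially the same route as the paper: the centering condition is used to write $g=\Div_{y}G$ so that $\tfrac{1}{\varepsilon}g^{\varepsilon}(u_{\varepsilon})=\Div G^{\varepsilon}(u_{\varepsilon})-\partial_{r}G^{\varepsilon}(u_{\varepsilon})\cdot Du_{\varepsilon}$ is absorbed into the coercive term for the a priori bounds, strong $L^{2}$-compactness comes from a weighted (Amar--Dall'Aglio--Paronetto) version of Aubin--Lions, and the limit is identified by testing with $\psi_{0}+\varepsilon\psi_{1}$, a monotonicity (Minty) argument, and substitution of the cell-problem solution $u_{1}=\pi(\cdot,u_{0},Du_{0})$ to produce $q$ and $q_{0}$. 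The only points the paper makes explicit that your sketch leaves implicit are the normalization of the corrector by the $\rho$-weighted mean (i.e.\ $u_{1}\in W_{\#\rho}^{1,p}(Y)$ rather than the quotient by constants) and the three separate regimes $0<k<2$, $k=2$, $k>2$, where for $k=2$ the cell problem acquires the extra term $\rho\,\partial u_{1}/\partial\tau$.
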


The main issue in getting (\ref{1.2}) lies at the level that the derivative
with respect to time $\partial u_{\varepsilon }/\partial t$ involves a
weight function represented by $\rho (x/\varepsilon )$. Indeed, with the
presence of $\rho (x/\varepsilon )$ the usual Aubin-Lions compactness result
\cite[Chap. 1, p. 58]{Lions} does not apply to our situation, and we use an
appropriate one due to Amar et al. \cite[Theorem 2.3]{Amar} and generalizing
the former. Also, because of $\rho (x/\varepsilon )$, the space of test
functions in the homogenization process is strongly modified. In the
framework of the usual two-scale convergence, the test functions are usually
taken in a space of the type $\mathcal{C}_{\text{per}}^{\infty }(Y)$. Here,
because of the function $\rho $, this space is reduced to those functions $u$
in $\mathcal{C}_{\text{per}}^{\infty }(Y)$ satisfying the additional \textit{%
normalized condition} $\int_{Y}\rho (y)u(y)dy=0$. This condition plays a
crucial role in the choice of the correction term $u_{1}$, which must then
satisfy the same assumption itself. Another consequence of this choice is
that one must prove the density of the space $\{u\in \mathcal{C}_{\text{per}%
}^{\infty }(Y):\int_{Y}\rho (y)u(y)dy=0\}$ in the space $\{u\in W_{\text{per}%
}^{1,p}(Y):\int_{Y}\rho (y)u(y)dy=0\}$. Also, due to the form of the
homogenized problem (which might be degenerate) there is no general
uniqueness result for the homogenized equation (\ref{1.2}). However, we show
that in some cases, there is uniqueness of the solution to the said problem.

\medskip

There is a variety of papers dealing with homogenization of operators of the
same type as (\ref{1.1}) but with a linear diffusion term which is not
depending on the macroscopic variables $(x,t)$. Without any pretension of
exhaustiveness we refer to \cite{AllPiat1} (for the case when $k=2$), to
\cite{PardouxPiat} (in which $\rho \equiv 1$ and $k=2$, but the behavior in
the microscopic time variable being with respect to some ergodic diffusion
process $\xi _{t/\varepsilon ^{2}}$) and to \cite{DiopPardoux} in which the
following operator is considered:
\begin{equation*}
\frac{\partial u_{\varepsilon }}{\partial t}-\Div\left( a\left( \frac{x}{%
\varepsilon },\frac{t}{\varepsilon ^{k}}\right) Du_{\varepsilon }\right) +%
\frac{1}{\varepsilon ^{\max (1,k/2)}}g\left( \frac{x}{\varepsilon },\frac{t}{%
\varepsilon ^{k}},u_{\varepsilon }\right) +h\left( \frac{x}{\varepsilon },%
\frac{t}{\varepsilon ^{k}},u_{\varepsilon }\right)
\end{equation*}%
with the same assumptions as in \cite{PardouxPiat}.\medskip

The paper is organized as follows. In Section 2, we recall the concept of
two-scale convergence. We adapt it to the situation of the problem (\ref{1.1}%
). Section 3 deals with a priori estimates of the solution of the problem (%
\ref{1.1}). In Section 4, we give some preliminary results that will be used
in the next section. Finally, Section 5 deals with the homogenization
results for (\ref{1.1}). We also study there a particular case when the
homogenized problem possesses a unique solution, and show that the whole
sequence converges in that case to the solution of a problem of
convection-diffusion type.\medskip

We end this section with some notations. All functions are assumed real
valued and all function spaces are considered over $\mathbb{R}$. Let $%
Y=\left( 0,1\right) ^{N}$ and let $F(\mathbb{R}^{N})$ be a given function
space. We denote by $F_{\text{per}}(Y)$ the space of functions in $F_{\text{%
loc}}(\mathbb{R}^{N})$ (when it makes sense) that are $Y$-periodic. Given a $%
Y$-periodic function $\rho $, we denote by $F_{\#\rho }(Y)$ the subspace of $%
F_{\text{per}}(Y)$ consisting of functions $u$ for which $\rho u$ has mean
value zero: $\int_{Y}\rho (y)u(y)dy=0$. As special cases, $\mathcal{D}_{%
\text{per}}(Y)$ denotes the space $\mathcal{C}_{\text{per}}^{\infty }(Y)$
while $\mathcal{D}_{\#\rho }(Y)$ stands for the space of those functions $u$
in $\mathcal{D}_{\text{per}}(Y)$ for which $\rho u$ has mean value zero. $%
\mathcal{D}_{\text{per}}^{\prime }(Y)$ stands for the topological dual of $%
\mathcal{D}_{\text{per}}(Y)$ which can be identified to the space of
periodic distributions in $\mathcal{D}^{\prime }(\mathbb{R}^{N})$.

\section{Two-scale convergence}

We recall the notion of two-scale convergence \cite{Allaire, Nguetseng}. We
adapt it to our framework and get the following

\begin{definition}
\label{d2.1}\emph{A sequence }$(u_{\varepsilon })_{\varepsilon >0}\subset
L^{p}(Q_{T})$\emph{\ (}$1\leq p<\infty $\emph{) is said to two-scale
converge towards }$u_{0}\in L^{p}(Q_{T}\times Y\times Z)$\emph{\ (}$Z=(0,1)$%
\emph{) if, as }$\varepsilon \rightarrow 0$\emph{, }%
\begin{equation}
\int_{Q_{T}}u_{\varepsilon }(x,t)f\left( x,t,\frac{x}{\varepsilon },\frac{t}{%
\varepsilon ^{k}}\right) dxdt\rightarrow \iint_{Q_{T}\times Y\times
Z}u_{0}(x,t,y,\tau )f(x,t,y,\tau )dxdtdyd\tau  \label{2.1}
\end{equation}%
\emph{for all }$f\in L^{p^{\prime }}(Q_{T};\mathcal{C}_{\text{\emph{per}}%
}(Y\times Z))$\emph{. We denote it by }$u_{\varepsilon }\rightarrow u_{0}$%
\emph{\ in }$L^{p}(Q_{T})$\emph{-2s.}
\end{definition}

The following two compactness results are well-known in the literature; see
e.g. \cite{NgWou} for the exact situation considered here.

\begin{theorem}
\label{t2.1}Let $1<p<\infty $. Then any bounded sequence in $L^{p}(Q_{T})$
admits a two-scale convergent subsequence.
\end{theorem}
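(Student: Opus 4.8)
The plan is to run the classical functional-analytic proof of two-scale compactness, the only nonstandard point being the two different microscopic scalings $x/\varepsilon$ and $t/\varepsilon ^{k}$. Fix a bounded sequence $(u_{\varepsilon })$ in $L^{p}(Q_{T})$, say $\left\Vert u_{\varepsilon }\right\Vert _{L^{p}(Q_{T})}\leq C$, and put $E=L^{p^{\prime }}(Q_{T};\mathcal{C}_{\text{per}}(Y\times Z))$. To each $\varepsilon$ associate the linear form $L_{\varepsilon }$ on $E$ given by $L_{\varepsilon }(f)=\int_{Q_{T}}u_{\varepsilon }(x,t)f\left( x,t,x/\varepsilon ,t/\varepsilon ^{k}\right) dxdt$. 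The first ingredient is the elementary bound $\left\Vert f(\cdot ,\cdot ,\cdot /\varepsilon ,\cdot /\varepsilon ^{k})\right\Vert _{L^{p^{\prime }}(Q_{T})}\leq \left\Vert f\right\Vert _{E}$ together with the oscillation (mean value) property $\left\Vert f(\cdot ,\cdot ,\cdot /\varepsilon ,\cdot /\varepsilon ^{k})\right\Vert _{L^{p^{\prime }}(Q_{T})}\rightarrow \left\Vert f\right\Vert _{L^{p^{\prime }}(Q_{T}\times Y\times Z)}$ as $\varepsilon \rightarrow 0$. Granting this, Hölder's inequality yields $\left\vert L_{\varepsilon }(f)\right\vert \leq C\left\Vert f\right\Vert _{L^{p^{\prime }}(Q_{T}\times Y\times Z)}+o(1)$, so the family $(L_{\varepsilon })$ is uniformly bounded in $E^{\prime }$.

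Next, since $Q_{T}$ carries a separable measure and $\mathcal{C}_{\text{per}}(Y\times Z)$ is a separable Banach space, the space $E$ is separable; hence bounded subsets of $E^{\prime }$ are sequentially weak-$\ast$ compact. Extract a subsequence, not relabeled, and $L\in E^{\prime }$ with $L_{\varepsilon }\rightarrow L$ weak-$\ast$ in $E^{\prime }$. Passing to the limit in the estimate above gives $\left\vert L(f)\right\vert \leq C\left\Vert f\right\Vert _{L^{p^{\prime }}(Q_{T}\times Y\times Z)}$ for every $f\in E$. Because $E$ is dense in $L^{p^{\prime }}(Q_{T}\times Y\times Z)$, the functional $L$ extends uniquely to a bounded linear functional on $L^{p^{\prime }}(Q_{T}\times Y\times Z)$; as $1<p<\infty$ so that $1<p^{\prime }<\infty$, the dual of $L^{p^{\prime }}(Q_{T}\times Y\times Z)$ is $L^{p}(Q_{T}\times Y\times Z)$, and therefore there is $u_{0}\in L^{p}(Q_{T}\times Y\times Z)$ with $L(f)=\iint_{Q_{T}\times Y\times Z}u_{0}f\,dxdtdyd\tau$ for all $f\in E$. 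Unwinding the definition of $L_{\varepsilon }$ gives precisely \eqref{2.1}, i.e. $u_{\varepsilon }\rightarrow u_{0}$ in $L^{p}(Q_{T})$-2s.

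The main obstacle, and the only step where the specific scalings matter, is the oscillation lemma in the first paragraph: one must check that unfolding at the two scales $\varepsilon$ and $\varepsilon ^{k}$ loses no mass. I would first establish it for $f$ of product form $\varphi (x,t)\psi (y)\theta (\tau )$, with $\varphi$ continuous on $\overline{Q}\times [0,T]$, $\psi \in \mathcal{C}_{\text{per}}(Y)$, $\theta \in \mathcal{C}_{\text{per}}(Z)$, where by Fubini it reduces to the one-scale mean value convergence $\int_{Q_{T}}\varphi (x,t)\psi (x/\varepsilon )\theta (t/\varepsilon ^{k})\,dxdt\rightarrow \bigl(\iint_{Q_{T}}\varphi \bigr)\bigl(\int_{Y}\psi \bigr)\bigl(\int_{Z}\theta \bigr)$: integrate in $x$ for frozen $t$, then in $t$, using that the two periodic scalings act on disjoint groups of variables. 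Finite sums of such products are dense in $E$, and combined with the uniform bound $\left\Vert f(\cdot ,\cdot ,\cdot /\varepsilon ,\cdot /\varepsilon ^{k})\right\Vert _{L^{p^{\prime }}(Q_{T})}\leq \left\Vert f\right\Vert _{E}$ a standard three-$\epsilon$ argument upgrades the convergence of norms to all $f\in E$. This is exactly the computation carried out in \cite{NgWou} for the present setting, so I would cite it and only sketch the short argument. The remaining steps (Banach--Alaoglu for separable preduals, density of $E$ in $L^{p^{\prime }}(Q_{T}\times Y\times Z)$, Riesz representation on $L^{p^{\prime }}$) are routine.
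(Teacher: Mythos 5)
Your proof is correct, and it is essentially the canonical argument: the paper itself offers no proof of Theorem \ref{t2.1}, stating only that the result is well known and citing \cite{NgWou} for the exact setting, and the functional-analytic scheme you lay out (uniform bound for $L_{\varepsilon}$ on $E=L^{p^{\prime}}(Q_{T};\mathcal{C}_{\text{per}}(Y\times Z))$, separability and weak-$\ast$ sequential compactness, the mean-value/oscillation lemma handling the two scalings $x/\varepsilon$ and $t/\varepsilon^{k}$ on disjoint groups of variables, then density and Riesz representation in $L^{p^{\prime}}(Q_{T}\times Y\times Z)$) is precisely the argument carried out in that reference. Your treatment of the only delicate point, the norm convergence $\Vert f^{\varepsilon}\Vert_{L^{p^{\prime}}(Q_{T})}\rightarrow\Vert f\Vert_{L^{p^{\prime}}(Q_{T}\times Y\times Z)}$ via product test functions and a density argument, is sound, so nothing is missing.
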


\begin{theorem}
\label{t2.2}Let $1<p<\infty $. Let $(u_{\varepsilon })_{\varepsilon \in E}$
(where $E$ is an ordinary sequence of real numbers converging to zero with $%
\varepsilon $) be a bounded sequence in $L^{p}(0,T;W_{0}^{1,p}(Q))$. There
exist a subsequence of $E$ denoted by $E^{\prime }$, and a couple $%
(u_{0},u_{1})\in L^{p}(0,T;W_{0}^{1,p}(Q))\times L^{p}(Q_{T}\times Z;W_{%
\text{\emph{per}}}^{1,p}(Y))$ such that, as $E^{\prime }\ni \varepsilon
\rightarrow 0$,
\begin{equation*}
u_{\varepsilon }\rightarrow u_{0}\text{ in }L^{p}(0,T;W_{0}^{1,p}(Q))\text{%
-weak}
\end{equation*}%
and
\begin{equation*}
\frac{\partial u_{\varepsilon }}{\partial x_{j}}\rightarrow \frac{\partial
u_{0}}{\partial x_{j}}+\frac{\partial u_{1}}{\partial y_{j}}\text{ in }%
L^{p}(Q_{T})\text{-2s }(1\leq j\leq N).
\end{equation*}
\end{theorem}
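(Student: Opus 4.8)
The plan is to combine the standard two-scale compactness machinery (Theorem \ref{t2.1}) with the Poincaré–Wirtinger inequality and a careful choice of test functions. First I would use the bound on $(u_\varepsilon)$ in $L^p(0,T;W_0^{1,p}(Q))$ together with the reflexivity of this space (for $1<p<\infty$) to extract a subsequence with $u_\varepsilon\to u_0$ weakly in $L^p(0,T;W_0^{1,p}(Q))$ for some $u_0$ in that space. Simultaneously, the sequence $(\partial u_\varepsilon/\partial x_j)$ is bounded in $L^p(Q_T)$ for each $j$, so by Theorem \ref{t2.1} we may pass to a further subsequence so that, for every $j$, $\partial u_\varepsilon/\partial x_j$ two-scale converges to some $\chi_j\in L^p(Q_T\times Y\times Z)$, and also $u_\varepsilon$ itself two-scale converges to $u_0$ (the two-scale limit of $u_\varepsilon$ coincides with its weak $L^p$ limit $u_0$ since the latter is independent of the microscopic variables, which follows by testing against oscillating functions with zero mean in $(y,\tau)$).

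The heart of the argument is to identify the structure $\chi_j = \partial u_0/\partial x_j + \partial u_1/\partial y_j$. Here I would proceed in two stages. \emph{Stage one}: test $\partial u_\varepsilon/\partial x_j$ against a function of the form $\varepsilon\,\psi(x,t)\,\varphi_1(x/\varepsilon)\,\varphi_2(t/\varepsilon^k)$ with $\psi\in\mathcal D(Q_T)$, $\varphi_1\in\mathcal C^\infty_{\mathrm{per}}(Y)$, $\varphi_2\in\mathcal C^\infty_{\mathrm{per}}(Z)$, integrate by parts moving the $x_j$-derivative onto the test function, and note that the $O(\varepsilon)$ prefactor kills every term except the one where the derivative hits $\varphi_1(x/\varepsilon)$, which carries a compensating $1/\varepsilon$. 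Passing to the two-scale limit yields $\iint_{Q_T\times Y\times Z}\chi_j\,\psi\,\varphi_1\,\varphi_2 = -\iint u_0\,\psi\,(\partial\varphi_1/\partial y_j)\,\varphi_2$; since $u_0$ is independent of $(y,\tau)$ the right side is zero, so $\int_Y\chi_j\,\partial\varphi_1/\partial y_j\,dy=0$ for a.e.\ $(x,t,\tau)$, which means $\chi_j - c_j(x,t,\tau)$ is a $\partial/\partial y_j$ of a $Y$-periodic function; a de Rham / potential-type argument (the $\chi_j$ are compatible as $j$ varies) produces $u_1\in L^p(Q_T\times Z;W^{1,p}_{\mathrm{per}}(Y)/\mathbb R)$ with $\chi_j = c_j + \partial u_1/\partial y_j$. \emph{Stage two}: test against $\psi(x,t)\varphi_1(x/\varepsilon)\varphi_2(t/\varepsilon^k)$ \emph{without} the $\varepsilon$ prefactor but now integrating by parts to move the derivative off $u_\varepsilon$; the two-scale limit then identifies $c_j$ with $\partial u_0/\partial x_j$.

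The step I expect to be the main obstacle is the passage from the relation $\int_Y \chi_j\,\partial_{y_j}\varphi\,dy=0$ (for all admissible periodic $\varphi$) to the existence of a single $W^{1,p}_{\mathrm{per}}(Y)$-function $u_1$ with the prescribed gradient — i.e.\ the closed-range / orthogonality argument showing that the $L^p$-closure of periodic gradients is exactly the annihilator of divergence-free periodic fields with zero mean. In the Hilbert case $p=2$ this is a routine orthogonal decomposition, but for general $p$ one needs the duality between $L^p$ and $L^{p'}$ together with the fact that the space of periodic gradients is closed in $L^p(Y)^N$ (a consequence of the Poincaré–Wirtinger inequality on the torus). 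I would also need to track measurability of $u_1$ in the parameters $(x,t,\tau)$, which follows from a standard measurable-selection argument once the fibrewise construction is in place. Everything else — extraction of subsequences, the integration by parts, and taking two-scale limits — is routine given Theorems \ref{t2.1} and the density results the paper sets up.
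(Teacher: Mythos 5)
First, a point of reference: the paper does not actually prove Theorem \ref{t2.2} --- it is quoted as a known compactness result with a pointer to \cite{NgWou} --- so your proposal can only be measured against the standard literature argument (Nguetseng--Allaire, adapted to the two microscopic scales $x/\varepsilon$, $t/\varepsilon^{k}$). Your overall strategy is the right one, and your identification of the de Rham/closed-range step producing $u_{1}$ as a technical core is reasonable (for periodic gradients in $L^{p}$ this is indeed standard, via Poincar\'e--Wirtinger and $L^{p}$--$L^{p'}$ duality). But the execution has concrete errors. In Stage one, the quantity you test, $\varepsilon \int_{Q_{T}}\partial _{j}u_{\varepsilon }\,\psi \varphi _{1}^{\varepsilon }\varphi _{2}^{\varepsilon }$, tends to $0$ (it is $\varepsilon $ times a convergent sequence), so the limit identity is not the one you display but rather $0=-\iint u^{\ast }\psi \,(\partial \varphi _{1}/\partial y_{j})\varphi _{2}$, where $u^{\ast }$ is the two-scale limit of $u_{\varepsilon }$ itself; this proves that $u^{\ast }$ is independent of $y$ and says nothing about $\chi _{j}$. (Taken at face value, your displayed identity together with the vanishing of its right-hand side would force $\chi _{j}\equiv 0$.) The orthogonality relation you actually need --- that $\chi -\int_{Y}\chi \,dy$ is a $y$-gradient --- is obtained by testing the full vector $(\partial _{j}u_{\varepsilon })_{j}$ against $y$-divergence-free periodic vector fields $\Psi $ with $\int_{Y}\Psi \,dy=0$ and no $\varepsilon $ prefactor: the $\frac{1}{\varepsilon }\int u_{\varepsilon }(\Div_{y}\Psi )^{\varepsilon }$ term produced by the integration by parts vanishes only because $\Div_{y}\Psi =0$. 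Your Stage two, which integrates by parts componentwise against a generic $\varphi _{1}(x/\varepsilon )$ without the prefactor, leaves an uncontrolled $\frac{1}{\varepsilon }\int u_{\varepsilon }\psi (\partial _{y_{j}}\varphi _{1})^{\varepsilon }\varphi _{2}^{\varepsilon }$ and does not go through as written.

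The more serious gap is the parenthetical claim that the two-scale limit of $u_{\varepsilon }$ coincides with its weak limit $u_{0}$ ``since the latter is independent of the microscopic variables, which follows by testing against oscillating functions with zero mean.'' That argument only shows that $u_{0}$ is the $(y,\tau )$-average of the two-scale limit $u^{\ast }$, not that $u^{\ast }$ is independent of $(y,\tau )$. Independence of $y$ does follow from the gradient bound, via the $\varepsilon $-scaled test functions above; independence of $\tau $ does \emph{not} follow from boundedness in $L^{p}(0,T;W_{0}^{1,p}(Q))$. Take $u_{\varepsilon }(x,t)=w(x)\sin (2\pi t/\varepsilon ^{k})$ with $w\in W_{0}^{1,p}(Q)$: then $u^{\ast }=w(x)\sin (2\pi \tau )$, $u_{0}=0$, and $\partial _{j}u_{\varepsilon }$ two-scale converges to $\partial _{j}w(x)\sin (2\pi \tau )$, whose $Y$-average depends on $\tau $ and therefore cannot equal $\partial _{j}u_{0}+\partial _{y_{j}}u_{1}$ with $u_{1}(x,t,\cdot ,\tau )$ $Y$-periodic. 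So the identification $c_{j}=\partial _{j}u_{0}$ in your Stage two genuinely fails without extra input controlling temporal oscillations. This is exactly why the version the paper actually uses, Theorem \ref{t2.3}, adds the hypothesis that $u_{\varepsilon }\rightarrow u_{0}$ strongly in $L^{2}(Q_{T})$ (which forces $u^{\ast }=u_{0}$), a hypothesis supplied in the application by the bound on $\rho ^{\varepsilon }\partial u_{\varepsilon }/\partial t$ and the compactness of Proposition \ref{p3.1}. Any complete proof must invoke such additional information at this point; your proposal elides the one step that is specific to the parabolic, two-time-scale situation.
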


In Theorem \ref{t2.2} the function $u_{1}$ is unique up to an additive
function of variables $x,t,\tau $. We need to fix its choice in accordance
with the needs in the sequel. For that, let us recall the definition of the
space $W_{\#\rho }^{1,p}(Y)$ for a given positive function $\rho \in L_{%
\text{per}}^{\infty }(Y)$ with non zero mean value:
\begin{equation*}
W_{\#\rho }^{1,p}(Y)=\left\{ u\in W_{\text{per}}^{1,p}(Y):\int_{Y}\rho
(y)u(y)dy=0\right\} .
\end{equation*}%
$W_{\#\rho }^{1,p}(Y)$ is a closed subspace of $W_{\text{per}}^{1,p}(Y)$
since it is the kernel of the continuous linear functional $u\mapsto
\int_{Y}\rho (y)u(y)dy$ defined on $W_{\text{per}}^{1,p}(Y)$. The following
version of Theorem \ref{t2.2} will be used in the sequel.

\begin{theorem}
\label{t2.3}Assumptions are those of Theorem \emph{\ref{t2.2}}. Assume
moreover that $p\geq 2$ and that there exists a function $u_{0}\in
L^{p}(0,T;W_{0}^{1,p}(Q))$ such that $u_{\varepsilon }\rightarrow u_{0}$ in $%
L^{2}(Q_{T})$ as $E\ni \varepsilon \rightarrow 0$. Then there exists a
subsequence $E^{\prime }$ of $E$ and a function $u_{1}\in L^{p}(Q_{T}\times
Z;W_{\#\rho }^{1,p}(Y))$ such that, as $E^{\prime }\ni \varepsilon
\rightarrow 0$,
\begin{equation}
\frac{\partial u_{\varepsilon }}{\partial x_{j}}\rightarrow \frac{\partial
u_{0}}{\partial x_{j}}+\frac{\partial u_{1}}{\partial y_{j}}\text{ in }%
L^{p}(Q_{T})\text{-2s }(1\leq j\leq N).  \label{2.2}
\end{equation}
\end{theorem}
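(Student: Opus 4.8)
The plan is to deduce Theorem \ref{t2.3} from Theorem \ref{t2.2} by adjusting the corrector $u_1$ through an additive term that depends only on $(x,t,\tau)$ so as to enforce the normalization $\int_Y \rho(y)u_1(x,t,y,\tau)\,dy = 0$. First I would apply Theorem \ref{t2.2} to the bounded sequence $(u_\varepsilon)_{\varepsilon\in E}$ in $L^p(0,T;W_0^{1,p}(Q))$ to extract a subsequence $E'$ and a couple $(v_0,v_1)\in L^p(0,T;W_0^{1,p}(Q))\times L^p(Q_T\times Z;W_{\text{per}}^{1,p}(Y))$ with $u_\varepsilon\to v_0$ weakly and $\partial u_\varepsilon/\partial x_j \to \partial v_0/\partial x_j + \partial v_1/\partial y_j$ in the two-scale sense. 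Since by hypothesis $u_\varepsilon \to u_0$ in $L^2(Q_T)$ and also (after passing to a further subsequence if needed, using Theorem \ref{t2.1}) $u_\varepsilon$ two-scale converges, the strong $L^2$ convergence forces the two-scale limit to be the $(y,\tau)$-independent function $u_0$; comparing weak limits in $L^p(0,T;W_0^{1,p}(Q))$ gives $v_0 = u_0$. Thus $\partial u_\varepsilon/\partial x_j \to \partial u_0/\partial x_j + \partial v_1/\partial y_j$ in $L^p(Q_T)$-2s.

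Next I would perform the normalization. Define
\begin{equation*}
c(x,t,\tau) = \frac{1}{\int_Y \rho(y)\,dy}\int_Y \rho(y)\,v_1(x,t,y,\tau)\,dy ,
\end{equation*}
which is well defined since $\rho$ has nonzero mean, is positive and bounded, and $v_1(x,t,\cdot,\tau)\in W_{\text{per}}^{1,p}(Y)\subset L^1(Y)$ for a.e. $(x,t,\tau)$. One checks by Fubini and the bound $\|\rho\|_{L^\infty(Y)}$ that $c\in L^p(Q_T\times Z)$. Now set $u_1(x,t,y,\tau) = v_1(x,t,y,\tau) - c(x,t,\tau)$. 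Then $u_1\in L^p(Q_T\times Z; W_{\text{per}}^{1,p}(Y))$, and for a.e. $(x,t,\tau)$ we have $\int_Y \rho(y)u_1(x,t,y,\tau)\,dy = \int_Y \rho(y)v_1\,dy - c(x,t,\tau)\int_Y\rho(y)\,dy = 0$, so $u_1(x,t,\cdot,\tau)\in W_{\#\rho}^{1,p}(Y)$ for a.e. $(x,t,\tau)$, i.e. $u_1\in L^p(Q_T\times Z;W_{\#\rho}^{1,p}(Y))$. Since $c$ does not depend on $y$, we have $\partial u_1/\partial y_j = \partial v_1/\partial y_j$ for every $j$, and hence (\ref{2.2}) holds with this $u_1$ along $E'$.

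The verification steps are all routine once the right corrector is written down; the only point that requires a little care is making sure the subtracted function $c$ lies in the correct Lebesgue space and that measurability in $(x,t,\tau)$ of the $y$-average is not an issue, both of which follow from Fubini's theorem together with $\rho\in L^\infty_{\text{per}}(Y)$ being bounded away from zero in mean. So the main (very mild) obstacle is the bookkeeping needed to identify $v_0$ with $u_0$: one must invoke Theorem \ref{t2.1} to get a two-scale limit of $(u_\varepsilon)$ itself, observe that strong $L^2$ convergence pins that limit down to $u_0$ independent of the fast variables, and reconcile this with the weak $L^p(0,T;W_0^{1,p}(Q))$ limit from Theorem \ref{t2.2}. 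The hypothesis $p\ge 2$ is exactly what makes the $L^2(Q_T)$ convergence of $(u_\varepsilon)$ compatible with, and usable alongside, the $L^p$-two-scale framework. With $v_0=u_0$ established, the normalization argument above finishes the proof.
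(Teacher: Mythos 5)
Your proposal is correct and follows essentially the same route as the paper: apply Theorem \ref{t2.2} and then replace the corrector by its $\rho$-normalized version $u_1 = v_1 - \bigl(\int_Y\rho\,dy\bigr)^{-1}\int_Y\rho\,v_1\,dy$, which leaves the $y$-gradient unchanged. The only difference is that you spell out the identification of the weak limit with $u_0$ via the strong $L^2$ convergence, a point the paper leaves implicit.
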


\begin{proof}
Let $u_{1}^{\#}\in L^{p}(Q_{T}\times Z;W_{\text{per}}^{1,p}(Y))$ be such
that Theorem \ref{t2.2} holds with $u_{1}^{\#}$ in place of $u_{1}$ in that
theorem. Set
\begin{equation*}
u_{1}(x,t,y,\tau )=u_{1}^{\#}(x,t,y,\tau )-\frac{1}{\int_{Y}\rho (y)dy}%
\int_{Y}\rho (y)u_{1}^{\#}(x,t,y,\tau )dy
\end{equation*}%
for $(x,t,y,\tau )\in Q_{T}\times Y\times Z$. Then $u_{1}\in
L^{p}(Q_{T}\times Z;W_{\#\rho }^{1,p}(Y))$ and moreover $\partial
u_{1}/\partial y_{i}=\partial u_{1}^{\#}/\partial y_{i}$ ($1\leq i\leq N$),
so that (\ref{2.2}) holds.
\end{proof}

\begin{remark}
\label{r2.1}\emph{In case }$\rho \equiv 1$\emph{, we retrieve the result of
\cite{NgWou} since in that case }$W_{\#\rho
}^{1,p}(Y)=W_{\#}^{1,p}(Y):=\{u\in W_{\text{\emph{per}}}^{1,p}(Y):%
\int_{Y}udy=0\}$\emph{. Throughout the rest of the paper, we assume without
lost of generality that }$\int_{Y}\rho dy=1$\emph{.}
\end{remark}

\section{Statement of the problem: A priori estimates and compactness result
for the solution}

\subsection{Problem setting}

Let $Q$ be a bounded Lipschitz domain of $\mathbb{R}^{N}$ and $T$ a positive
real number. By $Q_{T}$ we denote the cylinder $Q\times (0,T)$. Our aim is
to study the asymptotic behavior of the sequence of solutions to (\ref{1.1}%
). We begin this section by setting the necessary conditions under which
such a study can be made possible. For instance, we assume that the
coefficients of (\ref{1.1}) are constrained as follows:

\begin{itemize}
\item[\textbf{A1}] The function $a:(x,t,y,\tau ,\lambda )\mapsto
a(x,t,y,\tau ,\lambda )$ from $\overline{Q}_{T}\times \mathbb{R}^{N}\times
\mathbb{R}\times \mathbb{R}^{N}$ into $\mathbb{R}^{N}$ satisfies the
properties that:
\begin{equation}
\text{For each fixed }(x,t)\in \overline{Q}_{T}\text{ and }\lambda \in
\mathbb{R}^{N}\text{, }a(x,t,\cdot ,\cdot ,\lambda )\text{ is measurable}
\label{3.1}
\end{equation}%
\begin{equation}
\begin{array}{l}
a(x,t,y,\tau ,0)=0\text{ almost everywhere (a.e.) in }(y,\tau )\in \mathbb{R}%
^{N}\times \mathbb{R} \\
\text{and for all }(x,t)\in \overline{Q}_{T}\text{.}%
\end{array}
\label{3.2}
\end{equation}%
\begin{equation}
\begin{array}{l}
\text{There are three constants }c_{0},\,c_{1},\,c_{2}>0\text{ and a
continuity modulus }\omega \\
\text{(i.e., a nondecreasing continuous function on }[0,+\infty )\text{ such
that } \\
\omega (0)=0,\omega (r)>0\text{\ if }r>0\text{, and }\omega (r)=1\text{ if }%
r>1\text{) such that a.e. in} \\
(y,\tau )\in \mathbb{R}^{N}\times \mathbb{R}\text{,} \\
\text{(i) }\left( a(x,t,y,\tau ,\lambda )-a(x,t,y,\tau ,\lambda ^{\prime
})\right) \cdot \left( \lambda -\lambda ^{\prime }\right) \geq c_{1}\left|
\lambda -\lambda ^{\prime }\right| ^{p} \\
\text{(ii) }\left| a(x,t,y,\tau ,\lambda )\right| \leq c_{2}(1+\left|
\lambda \right| ^{p-1}) \\
\text{(iii) }\left| a(x,t,y,\tau ,\lambda )-a(x^{\prime },t^{\prime },y,\tau
,\lambda ^{\prime })\right| \\
\;\;\;\;\leq \omega (\left| x-x^{\prime }\right| +\left| t-t^{\prime
}\right| )(1+\left| \lambda \right| ^{p-1}+\left| \lambda ^{\prime }\right|
^{p-1})+c_{0}\left( 1+\left| \lambda \right| +\left| \lambda ^{\prime
}\right| \right) ^{p-2}\left| \lambda -\lambda ^{\prime }\right| \\
\text{for all }(x,t),(x^{\prime },t^{\prime })\in \overline{Q}_{T}\text{ and
all }\lambda ,\lambda ^{\prime }\in \mathbb{R}^{N}\text{, where the dot} \\
\text{denotes the usual Euclidean inner product in }\mathbb{R}^{N}\text{ and
}\left| \cdot \right| \text{ the associated} \\
\text{norm.}%
\end{array}
\label{3.3}
\end{equation}

\item[\textbf{A2}] \textbf{Lipschitz continuity}. The function $g$ is
continuous on $\mathbb{R}^{N}\times \mathbb{R}\times \mathbb{R}$ and there
is $C>0$ such that for any $(y,\tau )\in \mathbb{R}^{N+1}$ and $r$, $r_{1}$,
$r_{2}\in \mathbb{R}$
\begin{equation*}
\begin{array}{l}
\left| \partial _{r}g(y,\tau ,r)\right| \leq C \\
\left| \partial _{r}g(y,\tau ,r_{1})-\partial _{r}g(y,\tau ,r_{2})\right|
\leq C\left| r_{1}-r_{2}\right| (1+\left| r_{1}\right| +\left| r_{2}\right|
)^{-1}\text{.}%
\end{array}%
\end{equation*}

\item[\textbf{A3}] \textbf{Equilibrium condition}. We assume that $0$ is a
possible equilibrium solution of (\ref{1.1}), that is, $g(y,\tau ,0)=0$ for
any $(y,\tau )\in \mathbb{R}^{N+1}$.

\item[\textbf{A4}] \textbf{Positivity}. The density function $\rho \in
L^{\infty }(\mathbb{R}^{N})$ and there exists $\Lambda >0$ such that
\begin{equation*}
\Lambda ^{-1}\leq \rho (y)\leq \Lambda \text{\ for a.e. }y\in \mathbb{R}^{N}.
\end{equation*}%
We also assume without loss of generality that
\begin{equation*}
\int_{Y}\rho (y)dy=1.
\end{equation*}

\item[\textbf{A5}] \textbf{Periodicity hypothesis}. The density function $%
\rho $ is $Y$-periodic, the function $(y,\tau )\mapsto a(x,t,y,\tau ,\lambda
)$ is $Y\times Z$-periodic for any fixed $x,t,\lambda $. We assume also that
$g(\cdot ,\cdot ,r)\in \mathcal{C}_{\text{per}}(Y\times Z)$ for any $r\in
\mathbb{R}$ with $\int_{Y}g(y,\tau ,r)dy=0$ for all $(\tau ,r)\in \mathbb{R}%
^{2}$. We easily infer from the Fredholm alternative the existence of a
unique $R(\cdot ,\cdot ,r)\in \mathcal{C}_{\text{per}}(Y\times Z)$ such that
$\Delta _{y}R(\cdot ,\cdot ,r)=g(\cdot ,\cdot ,r)$ and $\int_{Y}R(\cdot
,\tau ,r)dy=0$ for all $\tau $, $r\in \mathbb{R}$, where $\Delta _{y}$
stands for the Laplacian with respect to the variable $y$. Moreover $R(\cdot
,\cdot ,r)$ is at least twice differentiable with respect to $y$. Let $%
G(y,\tau ,r)=D_{y}R(y,\tau ,r)$. Thanks to \textbf{A2} and \textbf{A3} we
see that
\begin{equation}
\left| G(y,\tau ,r)\right| \leq C\left| r\right| \text{, }\left| \partial
_{r}G(y,\tau ,r)\right| \leq C\text{,}  \label{3.4}
\end{equation}%
\begin{equation}
\left| \partial _{r}G(y,\tau ,r_{1})-\partial _{r}G(y,\tau ,r_{2})\right|
\leq C\left| r_{1}-r_{2}\right| (1+\left| r_{1}\right| +\left| r_{2}\right|
)^{-1}  \label{3.5}
\end{equation}%
where $\partial _{r}G$ denotes the partial derivative of $G$ with respect to
$r$.
\end{itemize}

As regards the definition of the trace functions $(x,t)\mapsto
a(x,t,x/\varepsilon ,t/\varepsilon ^{k},Du_{\varepsilon }(x,t))$, $%
(x,t)\mapsto g(x/\varepsilon ,t/\varepsilon ^{k},u_{\varepsilon }(x,t))$ and
$x\mapsto \rho (x/\varepsilon )$ here denoted respectively by $%
a^{\varepsilon }(\cdot ,Du_{\varepsilon })$, $g^{\varepsilon
}(u_{\varepsilon })$ and $\rho ^{\varepsilon }$, this has been extensively
discussed in many papers (see e.g. \cite{EJDE, AMPA}). These functions are
well-defined and satisfy properties of the same type as in \textbf{A1}-%
\textbf{A4}. Due to both the positivity assumption on the density function $%
\rho $ and the Lipschitzity of the function $g(y,\tau ,\cdot )$, one can
show in a\ standard fashion that the problem (\ref{1.1}) admits a unique
solution $u_{\varepsilon }$, which moreover belongs to the space $%
L^{p}(0,T;W_{0}^{1,p}(Q))\cap \mathcal{C}(0,T;L^{2}(Q))$; see e.g., \cite%
{Alt, Paronetto}.

\subsection{A priori estimates and compactness}

We will denote by $\left( \cdot ,\cdot \right) $ the duality pairing between
$W_{0}^{1,p}(Q)$ and its topological dual $W^{-1,p^{\prime }}(Q)$. The
symbols $\left\vert \cdot \right\vert _{L^{p}}$ and $\left\Vert \cdot
\right\Vert $ will stand for the respective norms of $L^{p}(Q)$ and $%
W_{0}^{1,p}(Q)$. Throughout $C$ will denote a generic constant independent
of $\varepsilon $. The following uniform a priori estimates hold.

\begin{lemma}
\label{l3.1}Under assumptions \textbf{A1}-\textbf{A5} the following
estimates hold true for $2\leq p<\infty $:
\begin{equation}
\sup_{0\leq t\leq T}\left\vert u_{\varepsilon }(t)\right\vert
_{L^{2}}^{2}\leq C,  \label{3.6}
\end{equation}%
\begin{equation}
\int_{0}^{T}\left\Vert u_{\varepsilon }(t)\right\Vert ^{p}dt\leq C
\label{3.7}
\end{equation}%
where $C$ is a positive constant which does not depend on $\varepsilon $.
\end{lemma}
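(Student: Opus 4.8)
The plan is to obtain the two estimates \eqref{3.6}--\eqref{3.7} in the standard energy way: test the equation \eqref{1.1} against $u_\varepsilon$ itself and integrate over $Q\times(0,t)$. The subtleties lie entirely in handling the weight $\rho^\varepsilon$ in the time-derivative term and in taming the large reaction term $\tfrac1\varepsilon g^\varepsilon(u_\varepsilon)$; every other term is treated by the usual coercivity/growth bounds \eqref{3.3}(i)--(ii) and Young's inequality.

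First I would justify the formal computation. Since $u_\varepsilon\in L^p(0,T;W_0^{1,p}(Q))\cap \mathcal C(0,T;L^2(Q))$ solves \eqref{1.1} in the weak sense, one has $\rho^\varepsilon\partial_t u_\varepsilon\in L^{p'}(0,T;W^{-1,p'}(Q))$, and because $\rho$ is bounded above and below by \textbf{A4}, the product rule $\langle\rho^\varepsilon\partial_t u_\varepsilon,u_\varepsilon\rangle=\tfrac12\tfrac{d}{dt}\int_Q\rho^\varepsilon|u_\varepsilon|^2\,dx$ holds in the sense of distributions on $(0,T)$ (this is exactly the kind of weighted integration-by-parts available from the Alt--Luckhaus / Paronetto framework cited in the paper, \cite{Alt, Paronetto}). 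Integrating \eqref{1.1} tested against $u_\varepsilon$ from $0$ to $t$ then gives
\begin{equation*}
\frac12\int_Q\rho^\varepsilon|u_\varepsilon(t)|^2\,dx+\int_0^t\!\!\int_Q a^\varepsilon(\cdot,Du_\varepsilon)\cdot Du_\varepsilon\,dx\,ds
=\frac12\int_Q\rho^\varepsilon|u^0|^2\,dx+\frac1\varepsilon\int_0^t\!\!\int_Q g^\varepsilon(u_\varepsilon)u_\varepsilon\,dx\,ds.
\end{equation*}
By \eqref{3.2} and \eqref{3.3}(i) (with $\lambda'=0$) the diffusion term is bounded below by $c_1\int_0^t\|u_\varepsilon(s)\|^p\,ds$; by \textbf{A4} the left side also dominates $\tfrac1{2\Lambda}|u_\varepsilon(t)|_{L^2}^2$, and the initial term is $\le\tfrac{\Lambda}{2}|u^0|_{L^2}^2\le C$.

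The main obstacle is the large reaction term on the right. Here I would use the key structural device of the paper: by \textbf{A5} the map $y\mapsto g(y,\tau,r)$ has zero $Y$-mean, so $g(y,\tau,r)=\Delta_y R(y,\tau,r)=\Div_y G(y,\tau,r)$ with $G=D_yR$ satisfying the bounds \eqref{3.4}. Writing $g^\varepsilon(u_\varepsilon)=\varepsilon\,\Div\!\bigl(G(\tfrac x\varepsilon,\tfrac t{\varepsilon^k},u_\varepsilon)\bigr)-\varepsilon\,(\Div_x G)(\cdots)$ — but note $G$ depends on $x$ only through $u_\varepsilon$, so the chain rule gives $\Div_x\bigl[G^\varepsilon(u_\varepsilon)\bigr]=\tfrac1\varepsilon(\Div_y G)^\varepsilon(u_\varepsilon)+(\partial_rG)^\varepsilon(u_\varepsilon)\cdot Du_\varepsilon$ — one finds
$\tfrac1\varepsilon g^\varepsilon(u_\varepsilon)=\Div\!\bigl[G^\varepsilon(u_\varepsilon)\bigr]-(\partial_rG)^\varepsilon(u_\varepsilon)\cdot Du_\varepsilon$. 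Integrating this against $u_\varepsilon$ and moving the divergence onto $u_\varepsilon$ (which vanishes on $\partial Q$) yields
\begin{equation*}
\frac1\varepsilon\int_Q g^\varepsilon(u_\varepsilon)u_\varepsilon\,dx
=-\int_Q G^\varepsilon(u_\varepsilon)\cdot Du_\varepsilon\,dx-\int_Q\bigl[(\partial_rG)^\varepsilon(u_\varepsilon)\cdot Du_\varepsilon\bigr]u_\varepsilon\,dx,
\end{equation*}
and both terms are now harmless: by \eqref{3.4}, $|G^\varepsilon(u_\varepsilon)|\le C|u_\varepsilon|$ and $|(\partial_rG)^\varepsilon(u_\varepsilon)|\le C$, so each integrand is bounded by $C|u_\varepsilon||Du_\varepsilon|$, hence by Young's inequality (splitting off a factor $|Du_\varepsilon|$ against $|u_\varepsilon|$, using $p\ge2$ so that $|Du_\varepsilon|\le\eta|Du_\varepsilon|^p+C_\eta$) by $\eta\|u_\varepsilon(s)\|^p+C_\eta|u_\varepsilon(s)|_{L^2}^2+C_\eta$. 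Choosing $\eta$ small enough to absorb the $\|u_\varepsilon\|^p$ part into the coercive diffusion term, we arrive at
\begin{equation*}
\frac1{2\Lambda}|u_\varepsilon(t)|_{L^2}^2+\frac{c_1}{2}\int_0^t\|u_\varepsilon(s)\|^p\,ds\le C+C\int_0^t|u_\varepsilon(s)|_{L^2}^2\,ds.
\end{equation*}

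Finally, Gronwall's inequality applied to $\phi(t):=|u_\varepsilon(t)|_{L^2}^2$ gives $\sup_{0\le t\le T}|u_\varepsilon(t)|_{L^2}^2\le C e^{CT}$, which is \eqref{3.6} with a constant independent of $\varepsilon$; feeding this bound back into the displayed inequality (taking $t=T$) produces $\int_0^T\|u_\varepsilon(s)\|^p\,ds\le C$, which is \eqref{3.7}. I expect the genuinely delicate point to be the rigorous justification of the weighted chain rule $\langle\rho^\varepsilon\partial_t u_\varepsilon,u_\varepsilon\rangle=\tfrac12\tfrac{d}{dt}\int_Q\rho^\varepsilon|u_\varepsilon|^2$ at the level of regularity the solution possesses, together with a clean statement of the identity $\tfrac1\varepsilon g^\varepsilon(u_\varepsilon)=\Div[G^\varepsilon(u_\varepsilon)]-(\partial_rG)^\varepsilon(u_\varepsilon)\cdot Du_\varepsilon$ in $W^{-1,p'}(Q)$; once those two are in place, the estimates follow by the routine Young--Gronwall argument above.
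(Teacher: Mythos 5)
Your proposal is correct and follows essentially the same route as the paper: the energy identity tested against $u_{\varepsilon}$, the decomposition $\tfrac{1}{\varepsilon}g^{\varepsilon}(u_{\varepsilon})=\Div G^{\varepsilon}(u_{\varepsilon})-\partial _{r}G^{\varepsilon}(u_{\varepsilon})\cdot Du_{\varepsilon}$ from \textbf{A5}, the bounds (\ref{3.4}), Young's inequality to absorb the gradient contribution into the coercive diffusion term, and Gronwall. The only cosmetic difference is the form of Young's inequality used (the paper works directly with the exponents $p,p^{\prime}$ and the fact that $p^{\prime}\leq 2$, whereas you pass through $|u_{\varepsilon}||Du_{\varepsilon}|\leq C_{\eta}|u_{\varepsilon}|^{2}+\eta|Du_{\varepsilon}|^{p}+C_{\eta}$), which is immaterial.
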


\begin{proof}
We have $u_{\varepsilon }\in L^{p}(0,T;W_{0}^{1,p}(Q))\cap \mathcal{C}%
(0,T;L^{2}(Q))$ and the following energy equation holds:
\begin{eqnarray}
&&\left| (\rho ^{\varepsilon })^{\frac{1}{2}}u_{\varepsilon }(t)\right|
_{L^{2}}^{2}-\left| (\rho ^{\varepsilon })^{\frac{1}{2}}u^{0}\right|
_{L^{2}}^{2}+2\int_{0}^{t}\int_{Q}a^{\varepsilon }(\cdot ,Du_{\varepsilon
}(s))\cdot Du_{\varepsilon }(s)dxds  \label{3.8} \\
&=&2\int_{0}^{t}\int_{Q}\frac{1}{\varepsilon }g^{\varepsilon
}(u_{\varepsilon }(s))u_{\varepsilon }(s)dxds.  \notag
\end{eqnarray}%
But using the representation $G(y,\tau ,r)=D_{y}R(y,\tau ,r)$ obtained
above, we get
\begin{equation*}
\frac{1}{\varepsilon }g\left( \frac{x}{\varepsilon },\frac{t}{\varepsilon
^{k}},u_{\varepsilon }\right) =\Div G\left( \frac{x}{\varepsilon },\frac{t}{%
\varepsilon ^{k}},u_{\varepsilon }\right) -\partial _{r}G\left( \frac{x}{%
\varepsilon },\frac{t}{\varepsilon ^{k}},u_{\varepsilon }\right) \cdot
Du_{\varepsilon }.
\end{equation*}%
Putting the above expression in (\ref{3.8}) we obtain
\begin{eqnarray*}
&&\left| (\rho ^{\varepsilon })^{\frac{1}{2}}u_{\varepsilon }(t)\right|
_{L^{2}}^{2}+2\int_{0}^{t}\int_{Q}a^{\varepsilon }(\cdot ,Du_{\varepsilon
})\cdot Du_{\varepsilon }dxds \\
&=&-2\int_{0}^{t}\int_{Q}G^{\varepsilon }(u_{\varepsilon })\cdot
Du_{\varepsilon }dxds-2\int_{0}^{t}\int_{Q}(\partial _{r}G^{\varepsilon
}(u_{\varepsilon })\cdot Du_{\varepsilon })u_{\varepsilon }dxds+\left| (\rho
^{\varepsilon })^{\frac{1}{2}}u^{0}\right| _{L^{2}}^{2},
\end{eqnarray*}%
where $G^{\varepsilon }(u_{\varepsilon })$ and $\partial _{r}G^{\varepsilon
}(u_{\varepsilon })$ are defined exactly as $g^{\varepsilon }(u_{\varepsilon
})$. Thanks to Assumptions \textbf{A1}-\textbf{A5} one can easily see that
\begin{eqnarray*}
\Lambda ^{-1}\left| u_{\varepsilon }(t)\right|
_{L^{2}}^{2}+2c_{1}\int_{0}^{t}\int_{Q}\left| Du_{\varepsilon }\right|
^{p}dxds &\leq &2C\int_{0}^{t}\int_{Q}\left| u_{\varepsilon }\right| \left|
Du_{\varepsilon }\right| dxds \\
&&+2C\int_{0}^{t}\int_{Q}\left| u_{\varepsilon }\right| \left|
Du_{\varepsilon }\right| dxds+\Lambda \left| u^{0}\right| _{L^{2}}^{2}.
\end{eqnarray*}%
But by Young's inequality, we have, for any positive $\delta $,
\begin{equation*}
4C\int_{0}^{t}\int_{Q}\left| u_{\varepsilon }\right| \left| Du_{\varepsilon
}\right| dxds\leq \int_{0}^{t}\int_{Q}\left( \frac{4C\delta ^{-p^{\prime }}}{%
p^{\prime }}\left| u_{\varepsilon }\right| ^{p^{\prime }}+\frac{4C\delta ^{p}%
}{p}\left| Du_{\varepsilon }\right| ^{p}\right) dxds.
\end{equation*}%
Choosing $\delta $ in such a way that $\frac{4C\delta ^{p}}{p}=c_{1}$ we get
\begin{equation*}
\Lambda ^{-1}\left| u_{\varepsilon }(t)\right|
_{L^{2}}^{2}+c_{1}\int_{0}^{t}\int_{Q}\left| Du_{\varepsilon }\right|
^{p}dxds\leq C\int_{0}^{t}\left| u_{\varepsilon }\right| _{L^{p^{\prime
}}}^{p^{\prime }}ds+K
\end{equation*}%
where $K=\Lambda \left| u^{0}\right| _{L^{2}}^{2}$. But as $p^{\prime }\leq
2 $, we have $\left| u_{\varepsilon }\right| _{L^{p^{\prime }}}^{p^{\prime
}}\leq C\left| u_{\varepsilon }\right| _{L^{2}}^{p^{\prime }}$, thus
\begin{equation}
\Lambda ^{-1}\left| u_{\varepsilon }(t)\right|
_{L^{2}}^{2}+c_{1}\int_{0}^{t}\int_{Q}\left| Du_{\varepsilon }\right|
^{p}dxds\leq C\int_{0}^{t}\left| u_{\varepsilon }\right| _{L^{2}}^{p^{\prime
}}ds,  \label{3.9}
\end{equation}%
hence
\begin{equation*}
\Lambda ^{-1}\left| u_{\varepsilon }(t)\right| _{L^{2}}^{2}\leq
C\int_{0}^{t}\left| u_{\varepsilon }\right| _{L^{2}}^{p^{\prime }}ds.
\end{equation*}%
But, since $p^{\prime }\leq 2$, there is a positive constant $k_{1}$
independent of $\varepsilon $ such that $\left| u_{\varepsilon }(t)\right|
_{L^{2}}^{p^{\prime }}\leq k_{1}(1+\left| u_{\varepsilon }(t)\right|
_{L^{2}}^{2})$. Thus
\begin{equation*}
\left| u_{\varepsilon }(t)\right| _{L^{2}}^{2}\leq C+C\int_{0}^{t}\left|
u_{\varepsilon }(s)\right| _{L^{2}}^{2}ds
\end{equation*}%
where here, $C=C(T)>0$. By the application of Gronwall inequality we get at
once (\ref{3.6}). We then deduce (\ref{3.7}) from (\ref{3.9}).
\end{proof}

The next result should be of capital interest for the sequel.

\begin{proposition}
\label{p3.1}The family $(u_{\varepsilon })_{\varepsilon >0}$ is relatively
compact in the space $L^{2}(Q_{T})$.
\end{proposition}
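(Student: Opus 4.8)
The strategy is to invoke a compactness theorem adapted to the weighted time derivative, namely the Amar–Andreucci–Bisegna–Gianni result \cite[Theorem 2.3]{Amar} alluded to in the introduction, which generalizes the classical Aubin–Lions lemma to evolution equations of the form $\rho^{\varepsilon}\partial_t u_{\varepsilon}=\cdots$ with a uniformly elliptic weight $\rho^{\varepsilon}$. To apply it, I would first re-read equation (\ref{1.1}) as an equation in $L^{p'}(0,T;W^{-1,p'}(Q))$ for the quantity $\partial_t(\rho^{\varepsilon}u_{\varepsilon})$, or equivalently estimate $\rho^{\varepsilon}\partial_t u_{\varepsilon}$ in a negative-order space. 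Using the representation $\tfrac{1}{\varepsilon}g^{\varepsilon}(u_{\varepsilon})=\Div G^{\varepsilon}(u_{\varepsilon})-\partial_r G^{\varepsilon}(u_{\varepsilon})\cdot Du_{\varepsilon}$ established in the proof of Lemma \ref{l3.1}, the right-hand side of (\ref{1.1}) becomes
\[
\Div\bigl(a^{\varepsilon}(\cdot,Du_{\varepsilon})+G^{\varepsilon}(u_{\varepsilon})\bigr)-\partial_r G^{\varepsilon}(u_{\varepsilon})\cdot Du_{\varepsilon}.
\]
By growth condition \textbf{A1}(ii), $a^{\varepsilon}(\cdot,Du_{\varepsilon})$ is bounded in $L^{p'}(Q_T)$ because $Du_{\varepsilon}$ is bounded in $L^p(Q_T)$ by (\ref{3.7}); by (\ref{3.4}), $G^{\varepsilon}(u_{\varepsilon})$ is controlled by $|u_{\varepsilon}|$, hence bounded in $L^2(Q_T)\subset L^{p'}(Q_T)$ using (\ref{3.6}); and $\partial_r G^{\varepsilon}(u_{\varepsilon})\cdot Du_{\varepsilon}$ is bounded in $L^p(Q_T)$ (again by (\ref{3.4}) and (\ref{3.7})), hence a fortiori in $L^{p'}(Q_T)$ since $p'\le 2\le p$ and $Q_T$ is bounded. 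Therefore the whole right-hand side is bounded in $L^{p'}(0,T;W^{-1,p'}(Q))$, which gives a uniform bound on $\rho^{\varepsilon}\partial_t u_{\varepsilon}$ in that space.

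With the two ingredients in hand — $(u_{\varepsilon})$ bounded in $L^{p}(0,T;W_0^{1,p}(Q))$ by (\ref{3.7}) (and in $L^{\infty}(0,T;L^2(Q))$ by (\ref{3.6})), and $(\rho^{\varepsilon}\partial_t u_{\varepsilon})$ bounded in $L^{p'}(0,T;W^{-1,p'}(Q))$ — together with the uniform ellipticity $\Lambda^{-1}\le\rho^{\varepsilon}\le\Lambda$ from \textbf{A4}, the hypotheses of \cite[Theorem 2.3]{Amar} are met. That theorem yields relative compactness of $(u_{\varepsilon})$ in $L^{p}(0,T;L^{p}(Q))$, and since $p\ge 2$ and $Q_T$ has finite measure this embeds into $L^{2}(Q_T)$; alternatively one interpolates the $L^{\infty}(0,T;L^2)$ bound against the compactness in a weaker space to land directly in $L^2(Q_T)$. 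The weight $\rho^{\varepsilon}$ does not disturb the argument precisely because it is bounded above and below uniformly in $\varepsilon$, so it can be absorbed: the point of citing the generalized lemma rather than classical Aubin–Lions is that $\rho^{\varepsilon}$ oscillates and does not converge strongly, so one cannot simply divide it out.

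The main obstacle is the verification that the nonlinear lower-order term does not destroy the uniform $W^{-1,p'}$ bound on the time derivative — this is exactly where the centering hypothesis \textbf{A5} (zero $y$-mean of $g$) earns its keep, since without rewriting $\tfrac1\varepsilon g^{\varepsilon}$ as $\Div G^{\varepsilon}-\partial_r G^{\varepsilon}\cdot Du_{\varepsilon}$ the factor $1/\varepsilon$ would blow up. Once the singular term is traded for a divergence of an $L^{p'}$-bounded field plus a genuinely bounded term, the estimate is routine. A secondary technical point is to make sure the cited compactness theorem is stated for the exponent range and boundary conditions at hand ($2\le p<\infty$, homogeneous Dirichlet, Lipschitz domain $Q$); if the statement in \cite{Amar} is given for Hilbert triples only, one would instead record a direct proof along the lines of the classical Aubin–Lions–Simon argument, extracting a subsequence, using the Fréchet–Kolmogorov criterion in space via the $W_0^{1,p}$ bound and in time via an Arzelà–Ascoli-type equicontinuity of $t\mapsto\rho^{\varepsilon}u_{\varepsilon}(t)$ in $W^{-1,p'}(Q)$, and then transferring equicontinuity from $\rho^{\varepsilon}u_{\varepsilon}$ to $u_{\varepsilon}$ using $\Lambda^{-1}\le\rho^{\varepsilon}\le\Lambda$.
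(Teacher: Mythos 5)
Your proposal follows essentially the same route as the paper: rewrite $\tfrac{1}{\varepsilon}g^{\varepsilon}(u_{\varepsilon})=\Div G^{\varepsilon}(u_{\varepsilon})-\partial _{r}G^{\varepsilon}(u_{\varepsilon})\cdot Du_{\varepsilon}$ to obtain a uniform bound on $\rho ^{\varepsilon}\partial _{t}u_{\varepsilon}$ in $L^{p^{\prime }}(0,T;W^{-1,p^{\prime }}(Q))$ from the a priori estimates (\ref{3.6})--(\ref{3.7}), and then invoke \cite[Theorem 2.3]{Amar} to conclude relative compactness in $L^{2}(Q_{T})$. The only cosmetic difference is that you bound the flux fields in $L^{p^{\prime }}(Q_{T})$ before taking divergences, whereas the paper estimates the $W^{-1,p^{\prime }}(Q)$ norms directly by duality; the substance is identical.
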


\begin{proof}
It follows from Eq. (\ref{1.1}) that
\begin{eqnarray*}
\left\Vert \rho ^{\varepsilon }\frac{\partial u_{\varepsilon }}{\partial t}%
\right\Vert _{L^{p^{\prime }}(0,T;W^{-1,p^{\prime }}(Q))}^{p^{\prime }}
&\leq &C\int_{0}^{T}\left\Vert \Div a^{\varepsilon }(\cdot ,Du_{\varepsilon
})\right\Vert _{W^{-1,p^{\prime }}(Q)}^{p^{\prime }}dt \\
&&+C\int_{0}^{T}\left\Vert \frac{1}{\varepsilon }g^{\varepsilon
}(u_{\varepsilon })\right\Vert _{W^{-1,p^{\prime }}(Q)}^{p^{\prime }}dt,
\end{eqnarray*}%
and, thanks to (\ref{3.7}) we easily get
\begin{equation}
\int_{0}^{T}\left\Vert \Div a^{\varepsilon }(\cdot ,Du_{\varepsilon
})\right\Vert _{W^{-1,p^{\prime }}(Q)}^{p^{\prime }}dt\leq C.  \label{3.11}
\end{equation}%
Next
\begin{equation*}
\left\Vert \frac{1}{\varepsilon }g^{\varepsilon }(u_{\varepsilon
})\right\Vert _{W^{-1,p^{\prime }}(Q)}=\sup_{\substack{ \phi \in
W_{0}^{1,p}(Q)  \\ \left\Vert \phi \right\Vert =1}}\left\vert
\int_{Q}G^{\varepsilon }(u_{\varepsilon })\cdot D\phi dx+\int_{Q}(\partial
_{r}G^{\varepsilon }(u_{\varepsilon })\cdot Du_{\varepsilon })\phi
dx\right\vert .
\end{equation*}%
By using condition \textbf{A5} (see especially (\ref{3.4})) associated to
the Poincar\'{e}'s inequality we get that
\begin{eqnarray*}
\left\Vert \frac{1}{\varepsilon }g^{\varepsilon }(u_{\varepsilon
})\right\Vert _{W^{-1,p^{\prime }}(Q)} &\leq &\sup_{\phi \in
W_{0}^{1,p}(Q),\left\Vert \phi \right\Vert =1}(C\left\vert u_{\varepsilon
}(s)\right\vert _{L^{2}}+C\left\Vert u_{\varepsilon }(s)\right\Vert
\left\vert \phi \right\vert ) \\
&\leq &C\left\vert u_{\varepsilon }(s)\right\vert _{L^{2}}+C\left\Vert
u_{\varepsilon }(s)\right\Vert .
\end{eqnarray*}%
It therefore follows from the estimates (\ref{3.6})-(\ref{3.7}) and the H%
\"{o}lder's inequality that
\begin{equation}
\int_{0}^{T}\left\Vert \frac{1}{\varepsilon }g^{\varepsilon }(u_{\varepsilon
})\right\Vert _{W^{-1,p^{\prime }}(Q)}^{p^{\prime }}dt\leq C.  \label{3.12}
\end{equation}%
Thus we infer from (\ref{3.11})-(\ref{3.12}) that
\begin{equation*}
\left\Vert \rho ^{\varepsilon }\frac{\partial u_{\varepsilon }}{\partial t}%
\right\Vert _{L^{p^{\prime }}(0,T;W^{-1,p^{\prime }}(Q))}\leq C.
\end{equation*}%
Hence, setting
\begin{equation*}
\mathcal{W}_{\varepsilon }=\{u\in L^{p}(0,T;W_{0}^{1,p}(Q)):(\rho
^{\varepsilon }u)^{\prime }\in L^{p^{\prime }}(0,T;W^{-1,p^{\prime }}(Q))\}
\end{equation*}%
which is a Banach space under the norm
\begin{equation*}
\left\Vert u\right\Vert _{\mathcal{W}_{\varepsilon }}=\left\Vert
u\right\Vert _{L^{p}(0,T;W_{0}^{1,p}(Q))}+\left\Vert (\rho ^{\varepsilon
}u)^{\prime }\right\Vert _{L^{p^{\prime }}(0,T;W^{-1,p^{\prime }}(Q))},
\end{equation*}%
we have that
\begin{equation*}
\left\Vert u_{\varepsilon }\right\Vert _{\mathcal{W}_{\varepsilon }}\leq C%
\text{ for any }\varepsilon >0
\end{equation*}%
where $C$ is independent of $\varepsilon $. Since $\int_{Y}\rho dy\neq 0$,
we therefore deduce from \cite[Theorem 2.3]{Amar} that $(u_{\varepsilon
})_{\varepsilon >0}$ is relatively compact in $L^{2}(Q_{T})$.
\end{proof}

\section{Preliminary results}

Let $2\leq p<\infty $. The following Gelfand triplet
\begin{equation*}
W_{\#\rho }^{1,p}(Y)\subset L_{\#\rho }^{2}(Y)\subset (W_{\#\rho
}^{1,p}(Y))^{\prime }
\end{equation*}%
holds, with continuous embeddings, $(W_{\#\rho }^{1,p}(Y))^{\prime }$ being
the topological dual of $W_{\#\rho }^{1,p}(Y)$; this can be seen by showing
that the space $W_{\#\rho }^{1,p}(Y)$ is densely embedded in $L_{\#\rho
}^{2}(Y)$ (this follows by repeating the proof of the forthcoming Lemma \ref%
{l4.2}). It is also a fact that the topological dual of $L_{\text{per}%
}^{p}(Z;W_{\#\rho }^{1,p}(Y))$ is $L_{\text{per}}^{p^{\prime }}(Z;[W_{\#\rho
}^{1,p}(Y)]^{\prime })$; this can be easily seen from the fact that $%
W_{\#\rho }^{1,p}(Y)$ is reflexive and $L_{\text{per}}^{p}(Z;W_{\#\rho
}^{1,p}(Y))$ is isometrically isomorphic to $L^{p}(\mathbb{T};W_{\#\rho
}^{1,p}(Y))$ where $\mathbb{T}$ is the 1-dimensional torus. We denote by $%
\left( ,\right) $ (resp. $[,]$) the duality pairing between $W_{\#\rho
}^{1,p}(Y)$ (resp. $L_{\text{per}}^{p}(Z;W_{\#\rho }^{1,p}(Y))$) and $%
[W_{\#\rho }^{1,p}(Y)]^{\prime }$ (resp. $L_{\text{per}}^{p^{\prime
}}(Z;[W_{\#\rho }^{1,p}(Y)]^{\prime })$). For the above reasons we have,
\begin{equation*}
\left[ u,v\right] =\int_{0}^{1}\left( u(\tau ),v(\tau )\right) d\tau
\end{equation*}%
for $u\in L_{\text{per}}^{p^{\prime }}(Z;[W_{\#\rho }^{1,p}(Y)]^{\prime })$
and $v\in L_{\text{per}}^{p}(Z;W_{\#\rho }^{1,p}(Y))$, and
\begin{equation*}
\left( u,\varphi \right) =\int_{Y}u(y)\varphi (y)dy
\end{equation*}%
for all $u\in L_{\#\rho }^{2}(Y)$ and $\varphi \in W_{\#\rho }^{1,p}(Y)$.

The following important density result will be used throughout the paper.

\begin{lemma}
\label{l4.2}The space
\begin{equation*}
\mathcal{D}_{\#\rho }(Y)=\left\{ u\in \mathcal{D}_{\text{\emph{per}}%
}(Y):\int_{Y}\rho udy=0\right\}
\end{equation*}%
is dense in $W_{\#\rho }^{1,p}(Y)$.
\end{lemma}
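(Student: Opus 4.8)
The plan is to deduce the statement from the classical density of $\mathcal{D}_{\text{per}}(Y)$ in $W_{\text{per}}^{1,p}(Y)$, correcting the mean-value constraint by a rank-one modification. First I would record two elementary facts. On one hand, $\mathcal{D}_{\text{per}}(Y)$ is dense in $W_{\text{per}}^{1,p}(Y)$, which is standard (mollify a $Y$-periodic $W^{1,p}$ function). On the other hand, the linear functional $L(v)=\int_Y\rho(y)v(y)\,dy$ is continuous on $W_{\text{per}}^{1,p}(Y)$, since $\rho\in L_{\text{per}}^{\infty}(Y)$ and $Y$ is bounded give $|L(v)|\leq\left\|\rho\right\|_{L^{\infty}}|Y|^{1/p^{\prime}}\left\|v\right\|_{L^{p}(Y)}$.

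Next I would fix a corrector $\theta\in\mathcal{D}_{\text{per}}(Y)$ with $L(\theta)=1$; by the normalization $\int_Y\rho\,dy=1$ one may simply take $\theta\equiv1$, and in any case such a $\theta$ exists because $\rho$ is a.e.\ positive (hypothesis \textbf{A4}) and the constants belong to $\mathcal{D}_{\text{per}}(Y)$. Then, given an arbitrary $u\in W_{\#\rho}^{1,p}(Y)$, I choose $\varphi_{n}\in\mathcal{D}_{\text{per}}(Y)$ with $\varphi_{n}\to u$ in $W_{\text{per}}^{1,p}(Y)$ and set $u_{n}=\varphi_{n}-L(\varphi_{n})\theta$. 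Each $u_{n}$ belongs to $\mathcal{D}_{\text{per}}(Y)$ and satisfies $L(u_{n})=L(\varphi_{n})-L(\varphi_{n})L(\theta)=0$, so $u_{n}\in\mathcal{D}_{\#\rho}(Y)$.

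Finally, since $L$ is continuous and $L(u)=0$, we get $L(\varphi_{n})\to L(u)=0$, whence $u_{n}=\varphi_{n}-L(\varphi_{n})\theta\to u-0\cdot\theta=u$ in $W_{\text{per}}^{1,p}(Y)$, i.e.\ in the norm that $W_{\#\rho}^{1,p}(Y)$ inherits from $W_{\text{per}}^{1,p}(Y)$. As $u_{n}\in\mathcal{D}_{\#\rho}(Y)\subset W_{\#\rho}^{1,p}(Y)$, this establishes the claimed density. I do not expect a genuine obstacle: the whole content is the classical periodic density result together with continuity of the single functional $L$; the only point worth recording explicitly is the existence of the smooth periodic corrector $\theta$ with $L(\theta)\neq0$, which is immediate from the positivity of $\rho$.
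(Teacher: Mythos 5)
Your proof is correct, but it follows a genuinely different route from the paper's. The paper argues by duality: it takes a continuous linear functional $L$ on $W_{\#\rho }^{1,p}(Y)$ vanishing on $\mathcal{D}_{\#\rho }(Y)$, extends it by Hahn--Banach to $W_{\text{per}}^{1,p}(Y)$, represents the extension through functions $(u_{i})_{0\leq i\leq N}\subset (L_{\text{per}}^{p^{\prime }}(Y))^{N+1}$, and then uses the decomposition $v-M_{y}(\rho v)$ together with the classical density of $\mathcal{D}_{\text{per}}(Y)$ in $W_{\text{per}}^{1,p}(Y)$ to conclude that $L$ vanishes identically on $W_{\#\rho }^{1,p}(Y)$. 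You instead give a direct construction: approximate $u$ by $\varphi _{n}\in \mathcal{D}_{\text{per}}(Y)$ and project back onto the constraint by subtracting $L(\varphi _{n})\theta $ with $\theta \equiv 1$, using the continuity of $v\mapsto \int_{Y}\rho v\,dy$ to see that the correction tends to zero. Both proofs ultimately rest on the same two ingredients (density of $\mathcal{D}_{\text{per}}(Y)$ in $W_{\text{per}}^{1,p}(Y)$ and continuity of the single functional $L$), and in fact the corrected sequence $\varphi _{n}-L(\varphi _{n})\theta $ is exactly the decomposition $v-M_{y}(\rho v)$ that the paper feeds into its dual argument. Your version is more elementary --- it avoids Hahn--Banach and the representation of functionals on $W_{\text{per}}^{1,p}(Y)$, and it produces an explicit approximating sequence --- while the paper's annihilator argument has the minor advantage of transferring verbatim to other pairs of spaces where an explicit corrector is less obvious (the paper reuses the same scheme to show that $\mathcal{E}$ is dense in $\mathcal{W}$ and that $W_{\#\rho }^{1,p}(Y)$ is dense in $L_{\#\rho }^{2}(Y)$). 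Either way, your observation that $\theta $ exists because $\rho $ is positive (so $L(\theta )\neq 0$ for $\theta \equiv 1$) is the only point requiring the hypotheses beyond the classical density result, and you have recorded it correctly.
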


\begin{proof}
Let $L$ be a continuous linear functional on $W_{\#\rho }^{1,p}(Y)$
verifying $L(v)=0$ for all $v\in \mathcal{D}_{\#\rho }(Y)$. We need to check
that $L(v)=0$ for all $v\in W_{\#\rho }^{1,p}(Y)$. By the Hahn-Banach
theorem, $L$ extends to a (possibly non unique!) continuous linear
functional $\widetilde{L}$ on $W_{\text{per}}^{1,p}(Y)$ and so, there exists
$(u_{i})_{0\leq i\leq N}\subset (L_{\text{per}}^{p^{\prime }}(Y))^{N+1}$
such that
\begin{equation*}
\widetilde{L}(v)=\int_{Y}u_{0}vdy+\sum_{i=1}^{N}\int_{Y}u_{i}\frac{\partial v%
}{\partial y_{i}}dy\text{\ for all }v\in W_{\text{per}}^{1,p}(Y).
\end{equation*}%
Let $v\in \mathcal{D}_{\text{per}}(Y)$; since $v-M_{y}(\rho v)\in \mathcal{D}%
_{\#\rho }(Y)$ (where $M_{y}(\rho v)=\int_{Y}\rho vdy$), we have $\widetilde{%
L}(v-M_{y}(\rho v))=L(v-M_{y}(\rho v))=0$, or equivalently,
\begin{equation}
L(v)=M_{y}(\rho v)\int_{Y}u_{0}dy\text{\ for all }v\in \mathcal{D}_{\text{per%
}}(Y).  \label{4.1}
\end{equation}%
But the linear functional $v\mapsto M_{y}(\rho v)$ defined on $\mathcal{D}_{%
\text{per}}(Y)$, is continuous on $\mathcal{D}_{\text{per}}(Y)$ endowed with
the $W_{\text{per}}^{1,p}(Y)$-norm, so that (\ref{4.1}) still holds true for
$v\in W_{\text{per}}^{1,p}(Y)$ (by the density of $\mathcal{D}_{\text{per}%
}(Y)$ in $W_{\text{per}}^{1,p}(Y)$). Therefore taking $v\in W_{\#\rho
}^{1,p}(Y)$ we get $L(v)=0$. This completes the proof.
\end{proof}

The following obvious result will be used in the sequel.

\begin{lemma}
\label{l4.1}Let $u\in \mathcal{D}_{\text{\emph{per}}}^{\prime }(Y\times Z)$.
We still write $u$ for $\left. u\right\vert _{\mathcal{D}_{\text{\emph{per}}%
}(Z)\otimes \lbrack \mathcal{D}_{\#\rho }(Y)]}$ (the restriction of $u$ to $%
\mathcal{D}_{\text{\emph{per}}}(Z)\otimes \lbrack \mathcal{D}_{\#\rho }(Y)]$%
). Assume $u$ is continuous on $\mathcal{D}_{\text{\emph{per}}}(Z)\otimes
\lbrack \mathcal{D}_{\#\rho }(Y)]$ under the $L_{\text{\emph{per}}%
}^{p}(Z;W_{\#\rho }^{1,p}(Y))$-norm. Then $u\in L_{\text{\emph{per}}%
}^{p^{\prime }}(Z;[W_{\#\rho }^{1,p}(Y)]^{\prime })$ and further
\begin{equation*}
\left\langle u,\varphi \right\rangle =\int_{0}^{1}\left( u(\tau ),\varphi
(\cdot ,\tau )\right) d\tau \;\;\;\;\;\;\;\;\;\;\;\;\;\;\;
\end{equation*}%
for all $\varphi \in \mathcal{D}_{\text{\emph{per}}}(Z)\otimes \lbrack
\mathcal{D}_{\#\rho }(Y)]$, where $\left\langle ,\right\rangle $ denotes the
duality pairing between $\mathcal{D}_{\text{\emph{per}}}^{\prime }(Y\times
Z) $ and $\mathcal{D}_{\text{\emph{per}}}(Y\times Z)$, whereas the
right-hand side denotes the product of $u$ and $\varphi $ in the duality
between $L_{\text{\emph{per}}}^{p^{\prime }}(Z;[W_{\#\rho
}^{1,p}(Y)]^{\prime })$ and $L_{\text{\emph{per}}}^{p}(Z;W_{\#\rho
}^{1,p}(Y))$ as stated above.
\end{lemma}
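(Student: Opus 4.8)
The plan is to argue by density: one extends $u$ from the tensor-product test space to all of $L_{\text{per}}^{p}(Z;W_{\#\rho}^{1,p}(Y))$, and then reads off the required representation from the duality identification recalled just before the statement, namely that the topological dual of $L_{\text{per}}^{p}(Z;W_{\#\rho}^{1,p}(Y))$ is $L_{\text{per}}^{p^{\prime}}(Z;[W_{\#\rho}^{1,p}(Y)]^{\prime})$ (valid because $W_{\#\rho}^{1,p}(Y)$ is reflexive and $L_{\text{per}}^{p}(Z;W_{\#\rho}^{1,p}(Y))$ is isometrically isomorphic to $L^{p}(\mathbb{T};W_{\#\rho}^{1,p}(Y))$).

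\emph{Density step.} First I would show that $\mathcal{D}_{\text{per}}(Z)\otimes[\mathcal{D}_{\#\rho}(Y)]$ is dense in $L_{\text{per}}^{p}(Z;W_{\#\rho}^{1,p}(Y))$. Simple (finite-rank) functions $\sum_{j}\mathbf{1}_{A_{j}}\phi_{j}$, with $A_{j}\subset Z$ measurable and $\phi_{j}\in W_{\#\rho}^{1,p}(Y)$, are dense in this Bochner space; each $\mathbf{1}_{A_{j}}$ can be approximated in $L^{p}(Z)$ by some $\psi_{j}\in\mathcal{D}_{\text{per}}(Z)$, and, by Lemma \ref{l4.2}, each $\phi_{j}$ can be approximated in the $W^{1,p}(Y)$-norm by an element of $\mathcal{D}_{\#\rho}(Y)$ (note that the approximants, being finite sums of such products, still lie in $\mathcal{D}_{\text{per}}(Z)\otimes[\mathcal{D}_{\#\rho}(Y)]$); a triangle-inequality estimate for the products $\psi_{j}\otimes\phi_{j}$ then gives the claim.

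\emph{Extension and representation.} By hypothesis $u$ is continuous on the dense subspace $\mathcal{D}_{\text{per}}(Z)\otimes[\mathcal{D}_{\#\rho}(Y)]$ for the $L_{\text{per}}^{p}(Z;W_{\#\rho}^{1,p}(Y))$-norm, hence it extends uniquely to a continuous linear functional $\widetilde{u}$ on $L_{\text{per}}^{p}(Z;W_{\#\rho}^{1,p}(Y))$. By the duality recalled above there is a unique $\widetilde{u}\in L_{\text{per}}^{p^{\prime}}(Z;[W_{\#\rho}^{1,p}(Y)]^{\prime})$ representing it, with $[\widetilde{u},v]=\int_{0}^{1}(\widetilde{u}(\tau),v(\tau))\,d\tau$ for all $v\in L_{\text{per}}^{p}(Z;W_{\#\rho}^{1,p}(Y))$. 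Identifying $u$ with $\widetilde{u}$, which is legitimate since the extension is unique, yields $u\in L_{\text{per}}^{p^{\prime}}(Z;[W_{\#\rho}^{1,p}(Y)]^{\prime})$.

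\emph{The formula, and where the difficulty lies.} For $\varphi\in\mathcal{D}_{\text{per}}(Z)\otimes[\mathcal{D}_{\#\rho}(Y)]$ the distributional pairing $\langle u,\varphi\rangle$ coincides with $[\widetilde{u},\varphi]$ by construction of the extension, and the latter equals $\int_{0}^{1}(\widetilde{u}(\tau),\varphi(\cdot,\tau))\,d\tau$ by the previous display; this is exactly the asserted identity. The only step that is not purely formal --- and the one on which the whole identification hinges, since otherwise the extension of $u$ would not be unique --- is the density claim; but that reduces at once to Lemma \ref{l4.2} together with the standard density of step functions in Bochner spaces, so no genuine obstacle remains.
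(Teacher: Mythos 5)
The paper states this lemma without proof (it is introduced as an ``obvious result''), and your density--extension--duality argument is exactly the standard way to fill it in; it is correct and complete. The only non-formal ingredient, the density of $\mathcal{D}_{\text{per}}(Z)\otimes \lbrack \mathcal{D}_{\#\rho }(Y)]$ in $L_{\text{per}}^{p}(Z;W_{\#\rho }^{1,p}(Y))$, does reduce as you say to Lemma \ref{l4.2} combined with the density of simple functions in the Bochner space, after which the unique extension and the duality identification recalled before the statement give both the membership $u\in L_{\text{per}}^{p^{\prime }}(Z;[W_{\#\rho }^{1,p}(Y)]^{\prime })$ and the pairing formula.
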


The next result will prove very efficient in the homogenization process in
the case when $k=2$.

\begin{lemma}
\label{l4.3}Let $\psi \in \mathcal{C}_{0}^{\infty }(Q_{T})\otimes (\mathcal{D%
}_{\text{\emph{per}}}(Z)\otimes \lbrack \mathcal{D}_{\#\rho }(Y)])$. Let $%
(u_{\varepsilon })_{\varepsilon \in E}$, $E^{\prime }$ and $(u_{0},u_{1})$
be as in Theorem \emph{\ref{t2.3}}. Then, as $E^{\prime }\ni \varepsilon
\rightarrow 0$
\begin{equation*}
\int_{Q_{T}}\frac{1}{\varepsilon }u_{\varepsilon }\rho ^{\varepsilon }\psi
^{\varepsilon }dxdt\rightarrow \int_{Q_{T}}\left[ \rho u_{1}(x,t),\psi (x,t)%
\right] dxdt
\end{equation*}%
where $\psi ^{\varepsilon }(x,t)=\psi (x,t,x/\varepsilon ,t/\varepsilon
^{k}) $ for $(x,t)\in Q_{T}$.
\end{lemma}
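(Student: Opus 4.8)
The plan is to exploit the normalized condition $\int_{Y}\rho(y)\psi(x,t,y,\tau)\,dy=0$ --- valid precisely because $\psi(x,t,\cdot ,\cdot )\in\mathcal{D}_{\text{per}}(Z)\otimes\lbrack\mathcal{D}_{\#\rho}(Y)]$ --- to represent $\varepsilon^{-1}\rho^{\varepsilon}\psi^{\varepsilon}$ as a divergence in $x$, up to a bounded term; integrate by parts so as to transfer the singular factor $1/\varepsilon$ onto $Du_{\varepsilon}$; and then pass to the two-scale limit using Theorem~\ref{t2.3}. As the space of test functions is a finite algebraic tensor product, by linearity it suffices to treat a single tensor $\psi(x,t,y,\tau)=\varphi(x,t)\zeta(\tau)\eta(y)$ with $\varphi\in\mathcal{C}_{0}^{\infty}(Q_{T})$, $\zeta\in\mathcal{D}_{\text{per}}(Z)$ and $\eta\in\mathcal{D}_{\#\rho}(Y)$, so that $\int_{Y}\rho\eta\,dy=0$.

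First I would solve an auxiliary cell problem. By \textbf{A5} the function $\rho\eta$ is $Y$-periodic, by \textbf{A4} it lies in $L_{\text{per}}^{\infty}(Y)$, and it has zero mean value; so, by the Fredholm alternative, there is a unique $\theta_{0}\in W_{\text{per}}^{1,2}(Y)$ with $\int_{Y}\theta_{0}\,dy=0$ and $\Delta_{y}\theta_{0}=\rho\eta$, and by $L^{s}$-elliptic regularity on the torus $\theta_{0}\in W_{\text{per}}^{2,s}(Y)$ for every $s<\infty$. Choosing $s>\max(N,p^{\prime })$ and putting $\Theta_{0}=D_{y}\theta_{0}$, we get $\Theta_{0}\in W_{\text{per}}^{1,s}(Y)^{N}\hookrightarrow\mathcal{C}_{\text{per}}(Y)^{N}$ with $\Div_{y}\Theta_{0}=\rho\eta$. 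Set $\Theta(x,t,y,\tau)=\varphi(x,t)\zeta(\tau)\Theta_{0}(y)$; then $\Theta$ is an admissible two-scale test field (compactly supported in $Q_{T}$, smooth in $(x,t,\tau)$, continuous and $Y$-periodic in $y$) and $\Div_{y}\Theta=\rho\psi$. Writing $\Theta^{\varepsilon}(x,t)=\Theta(x,t,x/\varepsilon,t/\varepsilon^{k})$, the chain rule gives $\Div_{x}\Theta^{\varepsilon}=(\Div_{x}\Theta)^{\varepsilon}+\tfrac{1}{\varepsilon}(\Div_{y}\Theta)^{\varepsilon}=(\Div_{x}\Theta)^{\varepsilon}+\tfrac{1}{\varepsilon}\rho^{\varepsilon}\psi^{\varepsilon}$; hence, multiplying by $u_{\varepsilon}$, integrating over $Q_{T}$, and integrating by parts in $x$ (boundary terms vanish since $\Theta(\cdot ,t)$ has compact support in $Q$),
\begin{equation*}
\int_{Q_{T}}\tfrac{1}{\varepsilon}u_{\varepsilon}\rho^{\varepsilon}\psi^{\varepsilon}\,dxdt=-\int_{Q_{T}}Du_{\varepsilon}\cdot\Theta^{\varepsilon}\,dxdt-\int_{Q_{T}}u_{\varepsilon}(\Div_{x}\Theta)^{\varepsilon}\,dxdt .
\end{equation*}

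Next I would pass to the limit along $E^{\prime }$. By Theorem~\ref{t2.3}, $\partial u_{\varepsilon}/\partial x_{j}\to\partial u_{0}/\partial x_{j}+\partial u_{1}/\partial y_{j}$ in $L^{p}(Q_{T})$-2s, so the first integral on the right converges to $-\iint_{Q_{T}\times Y\times Z}(Du_{0}+D_{y}u_{1})\cdot\Theta\,dxdtdyd\tau$; and since $u_{\varepsilon}\to u_{0}$ strongly in $L^{2}(Q_{T})$ (a standing hypothesis of Theorem~\ref{t2.3}) while $(\Div_{x}\Theta)^{\varepsilon}$ is bounded in $L^{\infty}(Q_{T})$ and converges weakly-$\ast$ to its mean value $\iint_{Y\times Z}\Div_{x}\Theta\,dyd\tau$, the second integral converges to $-\iint_{Q_{T}\times Y\times Z}u_{0}\,\Div_{x}\Theta\,dxdtdyd\tau$. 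Now $\Theta(x,t,\cdot ,\tau )$ is the $y$-gradient of a $Y$-periodic function, so $\int_{Y}\Theta(x,t,y,\tau)\,dy=0$ for every $(x,t,\tau)$; this kills the $Du_{0}$-contribution directly, and it kills the $u_{0}$-contribution because then $\iint_{Y\times Z}\Div_{x}\Theta\,dyd\tau=\Div_{x}\!\iint_{Y\times Z}\Theta\,dyd\tau=0$. Hence the limit reduces to $-\iint_{Q_{T}\times Y\times Z}D_{y}u_{1}\cdot\Theta\,dxdtdyd\tau$, and an integration by parts in $y$ (no boundary term, by $Y$-periodicity) together with $\Div_{y}\Theta=\rho\psi$ turns this into $\iint_{Q_{T}\times Y\times Z}\rho\,u_{1}\psi\,dxdtdyd\tau$, which, by the duality conventions fixed just before the lemma, is $\int_{Q_{T}}[\rho u_{1}(x,t),\psi(x,t)]\,dxdt$. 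This is the assertion.

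The step needing genuine care is the construction of $\Theta$: since $\rho$ is merely bounded, the cell problem has a non-smooth right-hand side, so one must invoke $L^{s}$-elliptic regularity on the torus together with a Sobolev embedding to guarantee that $\Theta_{0}$ is at least \emph{continuous} in $y$ --- this continuity is what makes $\Theta^{\varepsilon}$ an admissible test function in Definition~\ref{d2.1} and hence legitimizes the two-scale passage to the limit. Everything else --- the chain rule, the two integrations by parts, and the vanishing of the macroscopic terms --- is routine. Note, incidentally, that the value of $k$ is irrelevant here, since $\Div_{x}$ does not act on the time variable.
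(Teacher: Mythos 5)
Your proof is correct, but it takes a different route from the paper's: the paper disposes of this lemma in two lines by observing that $\rho\psi$ has zero mean over $Y$ and then citing Lemma~3.4 of the reference [NgWou] (the $\Sigma$-convergence paper of Nguetseng--Woukeng), which is precisely the statement that $\frac{1}{\varepsilon}\int_{Q_T}u_\varepsilon\phi^\varepsilon\,dxdt$ converges to the corresponding pairing with $u_1$ whenever $\phi$ has vanishing $Y$-mean. What you have done is reprove that cited lemma from scratch in the weighted setting: reduce to an elementary tensor, solve the auxiliary cell problem $\Delta_y\theta_0=\rho\eta$, write $\frac{1}{\varepsilon}\rho^\varepsilon\psi^\varepsilon$ as $\Div_x\Theta^\varepsilon-(\Div_x\Theta)^\varepsilon$, integrate by parts, and pass to the two-scale limit using Theorem~\ref{t2.3}, with the macroscopic contributions killed by $\int_Y\Theta_0\,dy=0$. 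All the steps check out (the reduction by linearity is legitimate for an algebraic tensor product; the integration by parts in $x$ is valid since $u_\varepsilon(\cdot,t)\in W^{1,p}_0(Q)$ and $\Theta^\varepsilon(\cdot,t)\in W^{1,s}$; the final integration by parts in $y$ on the torus carries no boundary term). Your approach is longer but arguably more honest on one point: the paper asserts that $\rho\psi$ belongs to $\mathcal{C}_0^\infty(Q_T)\otimes(\mathcal{D}_{\text{per}}(Z)\otimes\mathcal{D}_{\text{per}}(Y))$, which is loose since $\rho$ is only in $L^\infty_{\text{per}}(Y)$, so the cited lemma must in fact be applied to a test function that is merely bounded and measurable in $y$; your detour through $L^s$-elliptic regularity to secure a \emph{continuous} vector potential $\Theta_0$ addresses exactly the regularity gap that the citation glosses over. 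The trade-off is self-containedness and control of the weight $\rho$ versus the paper's brevity.
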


\begin{proof}
We recall that the space $\mathcal{D}_{\#\rho }(Y)$ consists of those
functions $\psi $ in $\mathcal{D}_{\text{per}}(Y)$ with the property $%
\int_{Y}\rho \psi dy=0$. With this in mind, let $\psi $ be as in the
statement of the lemma. Since $\rho \psi $ is in $\mathcal{C}_{0}^{\infty
}(Q_{T})\otimes (\mathcal{D}_{\text{per}}(Z)\otimes \lbrack \mathcal{D}_{%
\text{per}}(Y)])$ and verifies $\int_{Y}\rho \psi dy=0$, the result follows
at once by the application of \cite[Lemma 3.4]{NgWou}.
\end{proof}

Let $\mathcal{R}:L_{\text{per}}^{2}(Y)\rightarrow L_{\text{per}}^{2}(Y)$ be
defined by $\mathcal{R}u=\rho u$. Then $\mathcal{R}$ is a non-negative and
bounded linear self-adjoint operator. By the positivity of $\rho $, its
kernel is reduced to $0$. We denote by $L_{\rho }^{2}(Y)$ the completion of $%
L_{\text{per}}^{2}(Y)$ with respect to the norm $\left\Vert u\right\Vert
_{+}=\left\Vert \rho ^{1/2}u\right\Vert _{L^{2}(Y)}$.

Now, for $u\in L_{\text{per}}^{2}(Z;L_{\text{per}}^{2}(Y))$ we define $%
\mathcal{R}u$ as follows:
\begin{equation*}
\mathcal{R}u(\tau )=\mathcal{R}(u(\tau ))\text{\ for a.e. }\tau \in Z=(0,1),
\end{equation*}%
and we get an operator $\mathcal{R}:L_{\text{per}}^{2}(Z;L_{\text{per}%
}^{2}(Y))\rightarrow L_{\text{per}}^{2}(Z;L_{\text{per}}^{2}(Y))$. Finally
let $\mathcal{V}=L_{\text{per}}^{p}(Z;W_{\#\rho }^{1,p}(Y))$ and its
topological dual $\mathcal{V}^{\prime }=L_{\text{per}}^{p^{\prime
}}(Z;[W_{\#\rho }^{1,p}(Y)]^{\prime })$. Viewing $(\mathcal{R})^{\prime
}\equiv \rho \partial /\partial \tau $ as an unbounded operator defined from
$\mathcal{V}$ into $\mathcal{V}^{\prime }$, its domain is
\begin{equation*}
\mathcal{W}=\left\{ v\in \mathcal{V}:\rho \frac{\partial v}{\partial \tau }%
\in \mathcal{V}^{\prime }\right\} .
\end{equation*}%
With the norm $\left\| v\right\| _{\mathcal{W}}=\left\| v\right\| _{\mathcal{%
V}}+\left\| \rho \frac{\partial v}{\partial \tau }\right\| _{\mathcal{V}%
^{\prime }}$, $\mathcal{W}$ is a Banach space and the following result holds.

\begin{proposition}
\label{p4.1}The space $\mathcal{W}$ is continuously embedded into $\mathcal{C%
}([0,1];L_{\rho }^{2}(Y))$, that is, there is a positive constant $c$ such
that
\begin{equation*}
\sup_{0\leq \tau \leq 1}\left\| \rho ^{\frac{1}{2}}u(\tau )\right\|
_{L^{2}(Y)}\leq c\left\| u\right\| _{\mathcal{W}}
\end{equation*}%
for all $u\in \mathcal{W}$. Moreover
\begin{equation}
\left[ \rho \frac{\partial u}{\partial \tau },v\right] =-\left[ \rho \frac{%
\partial v}{\partial \tau },u\right]  \label{4.2}
\end{equation}%
for all $u,v\in \mathcal{W}$.
\end{proposition}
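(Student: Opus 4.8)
The goal is to prove Proposition \ref{p4.1}: that $\mathcal{W}\hookrightarrow \mathcal{C}([0,1];L^2_\rho(Y))$ continuously and that the integration-by-parts formula (\ref{4.2}) holds. This is a weighted version of the classical Lions–Magenes result that $\{v\in L^p(0,T;V): v'\in L^{p'}(0,T;V')\}\hookrightarrow \mathcal{C}([0,T];H)$ for a Gelfand triple $V\subset H\subset V'$. The plan is to adapt that proof, tracking the weight $\rho$ throughout, and using the Gelfand triple $W^{1,p}_{\#\rho}(Y)\subset L^2_{\#\rho}(Y)\subset [W^{1,p}_{\#\rho}(Y)]'$ established just before the proposition (whose density ingredient comes from Lemma \ref{l4.2}).

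First I would establish the key formal identity for smooth-in-$\tau$ functions: if $u,v\in \mathcal{V}$ with $\rho\,\partial_\tau u,\ \rho\,\partial_\tau v\in\mathcal{V}'$, then on $[\sigma,\tau]\subset[0,1]$
\begin{equation*}
\left\|\rho^{1/2}u(\tau)\right\|_{L^2(Y)}^2-\left\|\rho^{1/2}u(\sigma)\right\|_{L^2(Y)}^2
=2\int_\sigma^\tau \left(\rho\,\tfrac{\partial u}{\partial s}(s),u(s)\right)ds .
\end{equation*}
The natural route is to prove this first for $u$ belonging to a dense subclass — e.g. finite sums $\sum \varphi_i(\tau)w_i(y)$ with $\varphi_i\in\mathcal{D}_{\mathrm{per}}(Z)$ (or smooth on $[0,1]$) and $w_i\in \mathcal{D}_{\#\rho}(Y)$ — where everything is classical since $\tfrac{d}{ds}\int_Y \rho\,|u(s)|^2\,dy = 2\int_Y \rho\,\partial_s u\,u\,dy$ pointwise, and then pass to the limit. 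The density of such functions in $\mathcal{W}$ is what must be invoked here; since $L^p_{\mathrm{per}}(Z;W^{1,p}_{\#\rho}(Y))\cong L^p(\mathbb{T};W^{1,p}_{\#\rho}(Y))$ and $W^{1,p}_{\#\rho}(Y)$ is reflexive and separable, one can mollify in $\tau$ along the torus and use Lemma \ref{l4.2} to approximate in $y$. From this identity, $\tau\mapsto \|\rho^{1/2}u(\tau)\|_{L^2(Y)}^2$ is absolutely continuous, hence $u\in \mathcal{C}([0,1];L^2_\rho(Y))$; and taking the sup over $\tau$ together with an integration in $\sigma$ over $[0,1]$ (to start from a finite value) and Hölder's inequality $|\int_0^1(\rho\,\partial_s u,u)\,ds|\le \|\rho\,\partial_\tau u\|_{\mathcal{V}'}\|u\|_{\mathcal{V}}$ yields the bound $\sup_\tau\|\rho^{1/2}u(\tau)\|_{L^2(Y)}\le c\|u\|_{\mathcal{W}}$, with $c$ depending only on $\Lambda$ and $p$ (the lower bound $\rho\ge\Lambda^{-1}$ is needed to control $\|u(\sigma)\|_{L^2}$ by $\|\rho^{1/2}u(\sigma)\|_{L^2}$).

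For (\ref{4.2}), I would apply the quadratic identity to $u+v$ and to $u,v$ separately and subtract, or equivalently polarize, to obtain
\begin{equation*}
\left(\rho^{1/2}u(\tau),\rho^{1/2}v(\tau)\right)_{L^2(Y)}-\left(\rho^{1/2}u(0),\rho^{1/2}v(0)\right)_{L^2(Y)}
=\int_0^1\!\left(\rho\,\tfrac{\partial u}{\partial \tau},v\right)d\tau+\int_0^1\!\left(\rho\,\tfrac{\partial v}{\partial \tau},u\right)d\tau .
\end{equation*}
Since both $u$ and $v$ lie in $\mathcal{W}\subset\mathcal{C}([0,1];L^2_\rho(Y))$ and, crucially, are $Z$-periodic — meaning $u(0)=u(1)$ and $v(0)=v(1)$ in $L^2_\rho(Y)$ — the boundary terms at $\tau=0$ and $\tau=1$ cancel, leaving exactly $[\rho\,\partial_\tau u,v]+[\rho\,\partial_\tau v,u]=0$, which is (\ref{4.2}) (recalling $[\,\cdot\,,\cdot\,]=\int_0^1(\cdot,\cdot)\,d\tau$ from the Gelfand-triple discussion and Lemma \ref{l4.1}).

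The main obstacle is the density step underlying the quadratic identity: one must carefully justify that smooth (or at least nicely factored) functions are dense in $\mathcal{W}$ in the graph norm $\|\cdot\|_{\mathcal{V}}+\|\rho\,\partial_\tau(\cdot)\|_{\mathcal{V}'}$, so that the pointwise calculus can be transferred by passage to the limit, and that the operation $u\mapsto \rho\,\partial_\tau u$ commutes appropriately with the $\tau$-mollification on the torus (this is where periodicity in $Z$ is genuinely used, avoiding boundary contributions). A secondary technical point is that the pairing $(\rho\,\partial_s u(s),u(s))$ makes sense for a.e.\ $s$ as the $[W^{1,p}_{\#\rho}]'$–$W^{1,p}_{\#\rho}$ duality and agrees with $\int_Y\rho\,\partial_s u\,u\,dy$ when $u(s)$ is smooth; once the identity is secured on the dense class, everything else is a routine limiting argument and an application of Hölder's and Young's inequalities.
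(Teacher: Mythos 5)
Your proposal is correct, and its decisive final step coincides exactly with the paper's: polarize the energy identity to get $\left[ \rho \frac{\partial u}{\partial \tau },v\right] +\left[ \rho \frac{\partial v}{\partial \tau },u\right] =\int_{Y}\rho u(1)v(1)dy-\int_{Y}\rho u(0)v(0)dy$ and kill the boundary terms by $Z$-periodicity. The difference is one of self-containedness rather than of method: the paper simply cites \cite{Pankov} and \cite[Proposition 4.1]{Paronetto} for both the continuous embedding $\mathcal{W}\hookrightarrow \mathcal{C}([0,1];L_{\rho }^{2}(Y))$ and the integration-by-parts formula with boundary terms, whereas you reconstruct these from the classical Lions--Magenes argument adapted to the weight $\rho$. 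Your reconstruction is sound; in particular the quantity whose $\tau$-derivative equals $2\left( \rho \partial _{\tau }u,u\right) $ is indeed the weighted energy $\int_{Y}\rho \left\vert u\right\vert ^{2}dy$, consistent with the unweighted pairing $(u,\varphi )=\int_{Y}u\varphi \,dy$ fixed in Section 4, and the density of $\mathcal{E}=\mathcal{D}_{\text{per}}(Z)\otimes \lbrack \mathcal{D}_{\#\rho }(Y)]$ in $\mathcal{W}$, which you correctly identify as the main technical obstacle, is exactly what the paper asserts immediately after the proposition (your mollification-in-$\tau$ argument works precisely because $\rho $ is independent of $\tau $, so $\rho \partial _{\tau }$ commutes with convolution on the torus). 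Two minor quibbles: for the a priori bound it is the upper bound $\rho \leq \Lambda $ (together with $p\geq 2$ and the boundedness of $Y$) that controls $\int_{0}^{1}\left\Vert \rho ^{1/2}u(\sigma )\right\Vert _{L^{2}(Y)}^{2}d\sigma $ by $\left\Vert u\right\Vert _{\mathcal{V}}^{2}$, not the lower bound as you state; and the gain of the paper's route is brevity at the cost of opacity, while yours makes visible where periodicity and the structure of the Gelfand triple $W_{\#\rho }^{1,p}(Y)\subset L_{\#\rho }^{2}(Y)\subset (W_{\#\rho }^{1,p}(Y))^{\prime }$ actually enter.
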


\begin{proof}
The fact that $\mathcal{W}$ embeds continuously into $\mathcal{C}%
([0,1];L_{\rho }^{2}(Y))$ follows from \cite{Pankov}; see also \cite[%
Proposition 4.1]{Paronetto}. Still from the same references, we have that,
for $u,v\in \mathcal{W}$,
\begin{equation*}
\left[ \rho \frac{\partial u}{\partial \tau },v\right] +\left[ \rho \frac{%
\partial v}{\partial \tau },u\right] =\int_{Y}\rho u(1)v(1)dy-\int_{Y}\rho
u(0)v(0)dy,
\end{equation*}%
and by the $Z$-periodicity of $u$ and $v$, it follows that the right-hand
side of the above equality is zero, hence (\ref{4.2}).
\end{proof}

By repeating the proof of Lemma \ref{l4.2} one can show that the space $%
\mathcal{E}=\mathcal{D}_{\text{per}}(Z)\otimes \lbrack \mathcal{D}_{\#\rho
}(Y)]$ is dense in $\mathcal{W}$. The operator $\rho \partial /\partial \tau
$ will be useful in the homogenization process for the case $k=2$. This
being so, the Lemma \ref{l4.3} has a crucial corollary.

\begin{corollary}
\label{c4.1}Let the hypotheses be those of Lemma \emph{\ref{l4.3}}. Assume
moreover that $u_{1}\in \mathcal{W}$ and that $k=2$. Then, as $E^{\prime
}\ni \varepsilon \rightarrow 0$,%
\begin{equation*}
\int_{Q_{T}}\varepsilon u_{\varepsilon }\rho ^{\varepsilon }\frac{\partial
\psi ^{\varepsilon }}{\partial t}dxdt\rightarrow -\int_{Q_{T}}\left[ \rho
\frac{\partial u_{1}}{\partial \tau }(x,t),\psi (x,t)\right] dxdt.
\end{equation*}
\end{corollary}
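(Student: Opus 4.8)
The plan is to integrate by parts in the time variable $t$ and then invoke Lemma \ref{l4.3}. Start from the integral $\int_{Q_{T}}\varepsilon u_{\varepsilon}\rho^{\varepsilon}\frac{\partial\psi^{\varepsilon}}{\partial t}\,dxdt$. The key observation is the chain rule: since $\psi^{\varepsilon}(x,t)=\psi(x,t,x/\varepsilon,t/\varepsilon^{2})$ and $k=2$, we have
\begin{equation*}
\frac{\partial\psi^{\varepsilon}}{\partial t}(x,t)=\left(\frac{\partial\psi}{\partial t}\right)^{\varepsilon}(x,t)+\frac{1}{\varepsilon^{2}}\left(\frac{\partial\psi}{\partial\tau}\right)^{\varepsilon}(x,t),
\end{equation*}
where the superscript $\varepsilon$ denotes evaluation at $(x,t,x/\varepsilon,t/\varepsilon^{2})$. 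Multiplying by $\varepsilon$ gives
\begin{equation*}
\varepsilon\frac{\partial\psi^{\varepsilon}}{\partial t}=\varepsilon\left(\frac{\partial\psi}{\partial t}\right)^{\varepsilon}+\frac{1}{\varepsilon}\left(\frac{\partial\psi}{\partial\tau}\right)^{\varepsilon}.
\end{equation*}

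Next I would treat the two resulting terms separately. For the first, $\int_{Q_{T}}\varepsilon u_{\varepsilon}\rho^{\varepsilon}\bigl(\partial\psi/\partial t\bigr)^{\varepsilon}\,dxdt$, the factor $\varepsilon$ together with the uniform bound on $u_{\varepsilon}$ in $L^{2}(Q_{T})$ (Lemma \ref{l3.1}, or the compactness in Proposition \ref{p3.1}) and the boundedness of $\rho$ and of $\partial\psi/\partial t$ forces this term to $0$ by Cauchy--Schwarz. For the second term, note that $\partial\psi/\partial\tau$ again belongs to $\mathcal{C}_{0}^{\infty}(Q_{T})\otimes(\mathcal{D}_{\text{per}}(Z)\otimes[\mathcal{D}_{\#\rho}(Y)])$ — crucially it stays in $\mathcal{D}_{\#\rho}(Y)$ in the $y$-variable, since differentiating in $\tau$ does not affect the condition $\int_{Y}\rho\,(\cdot)\,dy=0$ — so Lemma \ref{l4.3} applies verbatim with $\psi$ replaced by $\partial\psi/\partial\tau$, yielding
\begin{equation*}
\int_{Q_{T}}\frac{1}{\varepsilon}u_{\varepsilon}\rho^{\varepsilon}\left(\frac{\partial\psi}{\partial\tau}\right)^{\varepsilon}dxdt\rightarrow\int_{Q_{T}}\left[\rho u_{1}(x,t),\frac{\partial\psi}{\partial\tau}(x,t)\right]dxdt.
\end{equation*}

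Finally I would rewrite the limit in the desired form. Since $u_{1}\in\mathcal{W}$ and $\partial\psi/\partial\tau$ lies in $\mathcal{E}=\mathcal{D}_{\text{per}}(Z)\otimes[\mathcal{D}_{\#\rho}(Y)]$ (in the microscopic variables, for a.e. $(x,t)$), I can apply the integration-by-parts identity \eqref{4.2} of Proposition \ref{p4.1} in the duality between $\mathcal{V}$ and $\mathcal{V}'$: for a.e. $(x,t)$,
\begin{equation*}
\left[\rho u_{1}(x,t),\frac{\partial\psi}{\partial\tau}(x,t)\right]=-\left[\rho\frac{\partial u_{1}}{\partial\tau}(x,t),\psi(x,t)\right].
\end{equation*}
Integrating over $Q_{T}$ and combining with the vanishing of the first term gives the claim. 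The main obstacle I anticipate is a bookkeeping one rather than a deep one: making sure that $\partial\psi/\partial\tau$ genuinely still satisfies the normalization $\int_{Y}\rho\,(\cdot)\,dy=0$ so that Lemma \ref{l4.3} is applicable, and that the pairing $[\rho u_{1},\partial\psi/\partial\tau]$ is meaningful pointwise in $(x,t)$ and measurable/integrable there, so that Fubini and the identity \eqref{4.2} can be used under the integral sign; both follow from $u_{1}\in\mathcal{W}$ and the product structure of $\psi$, but they are the points that need care.
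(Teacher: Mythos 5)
Your proposal is correct and follows essentially the same route as the paper: the same chain-rule decomposition $\varepsilon\,\partial_t\psi^{\varepsilon}=\varepsilon(\partial_t\psi)^{\varepsilon}+\varepsilon^{-1}(\partial_\tau\psi)^{\varepsilon}$, the application of Lemma \ref{l4.3} to $\partial\psi/\partial\tau$ (which indeed remains in $\mathcal{C}_{0}^{\infty}(Q_{T})\otimes\mathcal{E}$), and the integration by parts via identity (\ref{4.2}) of Proposition \ref{p4.1}. The only difference is that you spell out explicitly why the $\varepsilon(\partial_t\psi)^{\varepsilon}$ term vanishes, which the paper leaves implicit.
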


\begin{proof}
We have
\begin{equation*}
\int_{Q_{T}}\varepsilon u_{\varepsilon }\rho ^{\varepsilon }\frac{\partial
\psi ^{\varepsilon }}{\partial t}dxdt=\varepsilon \int_{Q_{T}}u_{\varepsilon
}\rho ^{\varepsilon }\left( \frac{\partial \psi }{\partial t}\right)
^{\varepsilon }dxdt+\frac{1}{\varepsilon }\int_{Q_{T}}u_{\varepsilon }\rho
^{\varepsilon }\left( \frac{\partial \psi }{\partial \tau }\right)
^{\varepsilon }dxdt.
\end{equation*}%
Since $\frac{\partial \psi }{\partial \tau }$ is in $\mathcal{C}_{0}^{\infty
}(Q_{T})\otimes (\mathcal{D}_{\text{per}}(Z)\otimes \lbrack \mathcal{D}%
_{\#\rho }(Y)])$, we infer from Lemma \ref{l4.3} that, as $E^{\prime }\ni
\varepsilon \rightarrow 0$,
\begin{equation*}
\int_{Q_{T}}\varepsilon u_{\varepsilon }\rho ^{\varepsilon }\frac{\partial
\psi ^{\varepsilon }}{\partial t}dxdt\rightarrow \int_{Q_{T}}\left[
\int_{Z}\left( \rho u_{1}(x,t,\cdot ,\tau ),\frac{\partial \psi }{\partial
\tau }(x,t,\cdot ,\tau ))\right) d\tau \right] dxdt.
\end{equation*}%
But
\begin{eqnarray*}
\int_{Z}\left( \rho u_{1}(x,t,\cdot ,\tau ),\frac{\partial \psi }{\partial
\tau }(x,t,\cdot ,\tau ))\right) d\tau &=&\left[ \rho u_{1}(x,t,\cdot ,\cdot
),\frac{\partial \psi }{\partial \tau }(x,t,\cdot ,\cdot ))\right] \\
&=&-\left[ \rho \frac{\partial u_{1}}{\partial \tau }(x,t,\cdot ,\cdot
),\psi (x,t,\cdot ,\cdot ))\right] ,
\end{eqnarray*}%
where in the last equality, we have used (\ref{4.2}) (see Proposition \ref%
{p4.1}).
\end{proof}

We will also need the following

\begin{lemma}
\label{l4.4}Let $g:\mathbb{R}_{y}^{N}\times \mathbb{R}_{\tau }\times \mathbb{%
R}_{r}\rightarrow \mathbb{R}$ be a function verifying the following
conditions:

\begin{itemize}
\item[(i)] $\left| \partial _{r}g(y,\tau ,u)\right| \leq C$

\item[(ii)] $g(\cdot ,\cdot ,r)\in \mathcal{C}_{\text{\emph{per}}}(Y\times
Z) $.
\end{itemize}

\noindent Let $(u_{\varepsilon })_{\varepsilon }$ be a sequence in $%
L^{2}(Q_{T})$ such that $u_{\varepsilon }\rightarrow u_{0}$ in $L^{2}(Q_{T})$
as $\varepsilon \rightarrow 0$, where $u_{0}\in L^{2}(Q_{T})$. Then, setting
$g^{\varepsilon }(u_{\varepsilon })(x,t)=g(x/\varepsilon ,t/\varepsilon
^{k},u_{\varepsilon }(x,t))$ we have, as $\varepsilon \rightarrow 0$,
\begin{equation*}
g^{\varepsilon }(u_{\varepsilon })\rightarrow g(\cdot ,\cdot ,u_{0})\text{
in }L^{2}(Q_{T})\text{-2s.}
\end{equation*}
\end{lemma}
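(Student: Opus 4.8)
The plan is to exploit the separation of the ``slow'' variable $(x,t)$ in $u_\varepsilon$ from the oscillating variables $(x/\varepsilon,t/\varepsilon^k)$ in the periodic argument of $g$, and to reduce everything to a standard two-scale convergence statement for a fixed continuous periodic test function. Concretely, let $f\in L^2(Q_T;\mathcal{C}_{\mathrm{per}}(Y\times Z))$ be arbitrary; I must show
\begin{equation*}
\int_{Q_T} g^\varepsilon(u_\varepsilon)(x,t)\, f\!\left(x,t,\tfrac{x}{\varepsilon},\tfrac{t}{\varepsilon^k}\right)dxdt \longrightarrow \iint_{Q_T\times Y\times Z} g(y,\tau,u_0(x,t))\, f(x,t,y,\tau)\,dxdtdyd\tau .
\end{equation*}
The first step is to split the difference between the left-hand integrand and $g^\varepsilon(u_0)(x,t) f(x,t,x/\varepsilon,t/\varepsilon^k)$: by the mean value theorem and hypothesis (i), $|g^\varepsilon(u_\varepsilon)-g^\varepsilon(u_0)|\le C|u_\varepsilon-u_0|$ pointwise a.e., so since $f(\cdot,\cdot,x/\varepsilon,t/\varepsilon^k)$ is bounded in $L^2(Q_T)$ (uniformly in $\varepsilon$, because $\|f(x,t,x/\varepsilon,t/\varepsilon^k)\|_{L^2(Q_T)}^2$ converges to $\iint |f|^2$ and is hence bounded), the Cauchy--Schwarz inequality gives
\begin{equation*}
\left|\int_{Q_T}\bigl(g^\varepsilon(u_\varepsilon)-g^\varepsilon(u_0)\bigr) f\!\left(x,t,\tfrac{x}{\varepsilon},\tfrac{t}{\varepsilon^k}\right)dxdt\right| \le C\,\|u_\varepsilon-u_0\|_{L^2(Q_T)}\,\|f(\cdot,\cdot,\tfrac{\cdot}{\varepsilon},\tfrac{\cdot}{\varepsilon^k})\|_{L^2(Q_T)} \to 0 .
\end{equation*}
So it remains to pass to the limit in $\int_{Q_T} g(x/\varepsilon,t/\varepsilon^k,u_0(x,t))\, f(x,t,x/\varepsilon,t/\varepsilon^k)\,dxdt$, where now $u_0$ is fixed.

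For that second step, I would first reduce to the case where $u_0$ is continuous (or even smooth) on $\overline{Q}_T$: given $\eta>0$, pick $v\in \mathcal{C}(\overline{Q}_T)$ with $\|u_0-v\|_{L^2(Q_T)}<\eta$; then $|g(y,\tau,u_0)-g(y,\tau,v)|\le C|u_0-v|$ by (i), and the same Cauchy--Schwarz estimate shows the error made by replacing $u_0$ with $v$ in both the $\varepsilon$-dependent integral and the limit integral is $O(\eta)$, uniformly in $\varepsilon$. With $u_0$ replaced by a continuous $v$, the function $(x,t,y,\tau)\mapsto g(y,\tau,v(x,t))\,f(x,t,y,\tau)$ lies in $L^2(Q_T;\mathcal{C}_{\mathrm{per}}(Y\times Z))$ (indeed $g(\cdot,\cdot,v(x,t))$ is continuous and periodic for each $(x,t)$ by (ii), and jointly it is an admissible test function since $v$ is continuous), so it is an admissible test function in the classical sense and the oscillating integral
\begin{equation*}
\int_{Q_T} g\!\left(\tfrac{x}{\varepsilon},\tfrac{t}{\varepsilon^k},v(x,t)\right) f\!\left(x,t,\tfrac{x}{\varepsilon},\tfrac{t}{\varepsilon^k}\right)dxdt
\end{equation*}
converges to $\iint_{Q_T\times Y\times Z} g(y,\tau,v(x,t))\,f(x,t,y,\tau)\,dxdtdyd\tau$ by the standard mean-value property for periodic functions (i.e. the case $u_\varepsilon\equiv 1$ of two-scale convergence applied to the admissible test function $g(y,\tau,v(x,t))f(x,t,y,\tau)$). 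Letting $\eta\to 0$ in the combined estimate then yields the claim for $u_0$.

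The step I expect to be the main technical obstacle is making precise the assertion that a product of the form $(x,t,y,\tau)\mapsto g(y,\tau,v(x,t))f(x,t,y,\tau)$ is a legitimate two-scale test function and that $g(x/\varepsilon,t/\varepsilon^k,v(x,t))f(x,t,x/\varepsilon,t/\varepsilon^k)$ two-scale converges to it --- this requires the continuity of $v$ to control the joint dependence (a merely $L^2$ function composed inside $g$ need not give an admissible test function), which is exactly why the density reduction to continuous $v$ is needed before invoking the classical result. Everything else --- the Lipschitz-in-$r$ bound from (i), Cauchy--Schwarz, and the $L^2$ density of $\mathcal{C}(\overline{Q}_T)$ in $L^2(Q_T)$ --- is routine. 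Note the argument never uses the zero-mean or Fredholm structure of \textbf{A5}, only the uniform Lipschitz bound (i) and the periodic continuity (ii), consistent with the weaker hypotheses stated in the lemma.
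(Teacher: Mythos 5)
Your proof is correct, and its first half --- the splitting of $g^{\varepsilon}(u_{\varepsilon})-g^{\varepsilon}(u_{0})$ via the Lipschitz bound from (i) and Cauchy--Schwarz against the bounded family $f^{\varepsilon}$ --- is exactly the paper's argument. Where you diverge is in the remaining step, the convergence of $\int_{Q_{T}}g^{\varepsilon}(u_{0})f^{\varepsilon}dxdt$: the paper disposes of this in one line by observing that the composite $(x,t,y,\tau )\mapsto g(y,\tau ,u_{0}(x,t))$ already lies in $L^{2}(Q_{T};\mathcal{C}_{\text{per}}(Y\times Z))$ (the Lipschitz bound makes $r\mapsto g(\cdot ,\cdot ,r)$ Lipschitz from $\mathbb{R}$ into $\mathcal{C}_{\text{per}}(Y\times Z)$, which gives both the measurability of the Bochner-valued composition and the $L^{2}$ bound $\Vert g(\cdot ,\cdot ,u_{0}(x,t))\Vert _{\infty }\leq C(1+|u_{0}(x,t)|)$), so that $g^{\varepsilon}(u_{0})$ two-scale converges to $g(\cdot ,\cdot ,u_{0})$ as the trace of an admissible test function. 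You instead insert an extra approximation of $u_{0}$ by a continuous $v$ and invoke the mean-value property only for the continuous composite $g(y,\tau ,v(x,t))f(x,t,y,\tau )$. Both routes are valid; yours is more elementary in that it sidesteps the measurability question for the $\mathcal{C}_{\text{per}}(Y\times Z)$-valued map $(x,t)\mapsto g(\cdot ,\cdot ,u_{0}(x,t))$, at the cost of an additional $\eta$-layer of approximation, while the paper's observation is sharper and is the same device it reuses elsewhere (e.g.\ for $\partial _{r}G$). Your closing remark that only (i) and (ii) are used, and none of the mean-zero or Fredholm structure of \textbf{A5}, matches the paper.
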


\begin{proof}
Assumption (i) implies the Lipschitz condition
\begin{equation}
\left| g(y,\tau ,r_{1})-g(y,\tau ,r_{2})\right| \leq C\left|
r_{1}-r_{2}\right| \text{\ for all }y,\tau ,r_{1},r_{2}.  \label{4.3}
\end{equation}%
Next, observe that from (ii) and (\ref{4.3}), the function $(x,t,y,\tau
)\mapsto g(y,\tau ,u_{0}(x,t))$ lies in $L^{2}(Q_{T};\mathcal{C}_{\text{per}%
}(Y\times Z))$, so that we have $g^{\varepsilon }(u_{0})\rightarrow g(\cdot
,\cdot ,u_{0})$ in $L^{2}(Q_{T})$-2s as $\varepsilon \rightarrow 0$. Now,
for $f\in L^{2}(Q_{T};\mathcal{C}_{\text{per}}(Y\times Z))$,
\begin{eqnarray*}
&&\int_{Q_{T}}g^{\varepsilon }(u_{\varepsilon })f^{\varepsilon
}dxdt-\iint_{Q_{T}\times Y\times Z}g(\cdot ,\cdot ,u_{0})fdxdtdyd\tau \\
&=&\int_{Q_{T}}(g^{\varepsilon }(u_{\varepsilon })-g^{\varepsilon
}(u_{0}))f^{\varepsilon }dxdt+\int_{Q_{T}}g^{\varepsilon
}(u_{0})f^{\varepsilon }dxdt \\
&&-\iint_{Q_{T}\times Y\times Z}g(\cdot ,\cdot ,u_{0})fdxdtdyd\tau .
\end{eqnarray*}%
Using the inequality
\begin{equation*}
\left| \int_{Q_{T}}(g^{\varepsilon }(u_{\varepsilon })-g^{\varepsilon
}(u_{0}))f^{\varepsilon }dxdt\right| \leq C\left\| u_{\varepsilon
}-u_{0}\right\| _{L^{2}(Q_{T})}\left\| f^{\varepsilon }\right\|
_{L^{2}(Q_{T})}
\end{equation*}%
in conjunction with the above convergence results we get readily the result.
\end{proof}

\begin{remark}
\label{r4.1}\emph{From the Lipschitz property of the function }$g$\emph{\
above we may get more information on the limit of the sequence }$%
g^{\varepsilon }(u_{\varepsilon })$\emph{. Indeed, since }$\left\vert
g^{\varepsilon }(u_{\varepsilon })-g^{\varepsilon }(u_{0})\right\vert \leq
C\left\vert u_{\varepsilon }-u_{0}\right\vert $\emph{, we deduce the
following convergence result: }%
\begin{equation*}
g^{\varepsilon }(u_{\varepsilon })\rightarrow \widetilde{g}(u_{0})\text{\ in
}L^{2}(Q_{T})\text{ as }\varepsilon \rightarrow 0
\end{equation*}%
\emph{where }$\widetilde{g}(u_{0})(x,t)=\int_{Y\times Z}g(y,\tau
,u_{0}(x,t))dyd\tau $\emph{.}
\end{remark}

\bigskip We will need the following spaces:
\begin{equation*}
\mathbb{F}_{0}^{1,p}=L^{p}(0,T;W_{0}^{1,p}(Q))\times L^{p}(Q_{T};\mathcal{X})
\end{equation*}%
(where $\mathcal{X}$ is either $\mathcal{V}$\ or $\mathcal{W}$) and
\begin{equation*}
\mathcal{F}_{0}^{\infty }=\mathcal{C}_{0}^{\infty }(Q_{T})\times \lbrack
\mathcal{C}_{0}^{\infty }(Q_{T})\otimes \mathcal{E}]
\end{equation*}%
where we recall that $\mathcal{W}=\{v\in \mathcal{V}:\rho \partial
v/\partial \tau \in \mathcal{V}^{\prime }\}$ with $\mathcal{V}=L_{\text{per}%
}^{p}(Z;W_{\#\rho }^{1,p}(Y))$, and $\mathcal{E}=\mathcal{D}_{\text{per}%
}(Z)\otimes \lbrack \mathcal{D}_{\#\rho }(Y)]$. $\mathbb{F}_{0}^{1,p}$ is a
Banach space under the norm
\begin{equation*}
\left\| (u_{0},u_{1})\right\| _{\mathbb{F}_{0}^{1,p}}=\left\| u_{0}\right\|
_{L^{p}(\left( 0,T\right) ;W_{0}^{1,p}(Q))}+\left\| u_{1}\right\|
_{L^{p}(Q_{T};\mathcal{X})}
\end{equation*}%
with the further property that $\mathcal{F}_{0}^{\infty }$ is dense in $%
\mathbb{F}_{0}^{1,p}$; this obviously follows from Lemma \ref{l4.2}.

\section{Homogenization results}

\subsection{Global homogenized problem}

For a function $u\in L^{p}(0,T;W_{0}^{1,p}(Q))$, we shall denote by $%
u^{\prime }$ the partial derivative $\partial u/\partial t$ defined in a
distributional sense on $\mathcal{D}^{\prime }(Q_{T})$. Let $E$ be an
ordinary sequence of positive real numbers $\varepsilon $ converging to $0$
with $\varepsilon $. We assume throughout this section that $p\geq 2$. By
the strong relative compactness of the family $(u_{\varepsilon
})_{\varepsilon >0}$ (see Proposition \ref{p3.1}), there exist a subsequence
$E^{\prime }$ from $E$ and a function $u_{0}\in L^{p}(0,T;W_{0}^{1,p}(Q))$
such that, as $E^{\prime }\ni \varepsilon \rightarrow 0$,
\begin{equation}
u_{\varepsilon }\rightarrow u_{0}\text{ in }L^{2}(Q_{T}).  \label{5.1}
\end{equation}%
Let $u_{1}\in L^{p}(Q_{T}\times Z;W_{\#\rho }^{1,p}(Y))$ be the function
determined by the Theorem \ref{t2.3} such that, as $E^{\prime }\ni
\varepsilon \rightarrow 0$,
\begin{equation}
\frac{\partial u_{\varepsilon }}{\partial x_{j}}\rightarrow \frac{\partial
u_{0}}{\partial x_{j}}+\frac{\partial u_{1}}{\partial y_{j}}\text{ in }%
L^{p}(Q_{T})\text{-2s (}1\leq j\leq N\text{).}  \label{5.2}
\end{equation}%
The first important result of this section is the following

\begin{theorem}
\label{t5.1}The couple $(u_{0},u_{1})$ determined above by \emph{(\ref{5.1}%
)-(\ref{5.2})} solves the following variational problem:
\begin{equation}
\left\{
\begin{array}{l}
(u_{0},u_{1})\in L^{p}(0,T;W_{0}^{1,p}(Q))\times L^{p}(Q_{T};\mathcal{V}):
\\
\int_{0}^{T}\left( u_{0}^{\prime },v_{0}\right) dxdt+\iint_{Q_{T}\times
Y\times Z}a(\cdot ,Du_{0}+D_{y}u_{1})\cdot (Dv_{0}+D_{y}v_{1})dxdtdyd\tau \\
=\iint_{Q_{T}\times Y\times Z}g(y,\tau ,u_{0})v_{1}dxdtdyd\tau
-\iint_{Q_{T}\times Y\times Z}G(y,\tau ,u_{0})\cdot Dv_{0}dxdtdyd\tau \\
-\iint_{Q_{T}\times Y\times Z}\left( \partial _{r}G(y,\tau ,u_{0})\cdot
(Du_{0}+D_{y}u_{1})\right) v_{0}dxdtdyd\tau \\
\text{for all }(v_{0},v_{1})\in \mathcal{F}_{0}^{\infty }\text{, if }0<k<2;%
\end{array}%
\right.  \label{5.3}
\end{equation}%
\begin{equation}
\left\{
\begin{array}{l}
(u_{0},u_{1})\in L^{p}(0,T;W_{0}^{1,p}(Q))\times L^{p}(Q_{T};\mathcal{W}):
\\
\int_{0}^{T}\left( u_{0}^{\prime },v_{0}\right) dxdt+\int_{Q_{T}}\left[ \rho
\frac{\partial u_{1}}{\partial \tau },v_{1}\right] dxdt \\
=-\iint_{Q_{T}\times Y\times Z}a(\cdot ,Du_{0}+D_{y}u_{1})\cdot
(Dv_{0}+D_{y}v_{1})dxdtdyd\tau \\
+\iint_{Q_{T}\times Y\times Z}g(y,\tau ,u_{0})v_{1}dxdtdyd\tau
-\iint_{Q_{T}\times Y\times Z}G(y,\tau ,u_{0})\cdot Dv_{0}dxdtdyd\tau \\
-\iint_{Q_{T}\times Y\times Z}\left( \partial _{r}G(y,\tau ,u_{0})\cdot
(Du_{0}+D_{y}u_{1})\right) v_{0}dxdtdyd\tau \\
\text{for all }(v_{0},v_{1})\in \mathcal{F}_{0}^{\infty }\text{, if }k=2;%
\end{array}%
\right.  \label{5.4}
\end{equation}%
and
\begin{equation}
\left\{
\begin{array}{l}
(u_{0},u_{1})\in L^{p}(0,T;W_{0}^{1,p}(Q))\times L^{p}(Q_{T};W_{\#\rho
}^{1,p}(Y)): \\
\int_{0}^{T}\left( u_{0}^{\prime },v_{0}\right) dxdt+\iint_{Q_{T}\times Y}%
\overline{a}(\cdot ,Du_{0}+D_{y}u_{1})\cdot (Dv_{0}+D_{y}v_{1})dxdtdy \\
=\iint_{Q_{T}\times Y\times }\overline{g}(y,u_{0})v_{1}dxdtdy-\iint_{Q_{T}%
\times Y}\overline{G}(y,u_{0})\cdot Dv_{0}dxdtdy \\
-\iint_{Q_{T}\times Y}\left( \overline{\partial _{r}G}(y,u_{0})\cdot
(Du_{0}+D_{y}u_{1})\right) v_{0}dxdtdy \\
\text{for all }(v_{0},v_{1})\in \mathcal{C}_{0}^{\infty }(Q_{T})\times (%
\mathcal{C}_{0}^{\infty }(Q_{T})\otimes \mathcal{D}_{\#\rho }(Y))\text{, if }%
k>2\text{,}%
\end{array}%
\right.  \label{5.5}
\end{equation}%
where $\overline{a}(\cdot ,Du_{0}+D_{y}u_{1})=\int_{0}^{1}a(\cdot
,Du_{0}+D_{y}u_{1})d\tau $, $\overline{g}(y,u_{0})=\int_{0}^{1}g(y,\tau
,u_{0})d\tau $ (and a similar definition for $\overline{G}(y,u_{0})$ and $%
\overline{\partial _{r}G}(y,u_{0})$).
\end{theorem}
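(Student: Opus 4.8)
The plan is to pass to the two-scale limit in the weak formulation of the $\varepsilon$-problem, testing against oscillating functions built from $\mathcal{F}_0^\infty$. Concretely, for $(v_0,v_1)\in\mathcal{F}_0^\infty$ I would use, for $k=2$, the test function $\phi_\varepsilon(x,t)=v_0(x,t)+\varepsilon v_1(x,t,x/\varepsilon,t/\varepsilon^2)$, and similarly (with the appropriate power of $\varepsilon$ in the time slot) for $0<k<2$ and $k>2$. Plugging $\phi_\varepsilon$ into the variational identity satisfied by $u_\varepsilon$, namely
\begin{equation*}
-\int_{Q_T}\rho^\varepsilon u_\varepsilon\,\partial_t\phi_\varepsilon\,dxdt+\int_{Q_T}a^\varepsilon(\cdot,Du_\varepsilon)\cdot D\phi_\varepsilon\,dxdt=\int_{Q_T}\tfrac1\varepsilon g^\varepsilon(u_\varepsilon)\phi_\varepsilon\,dxdt+\int_Q\rho^\varepsilon u^0\phi_\varepsilon(0)\,dx,
\end{equation*}
I would analyze each term as $E'\ni\varepsilon\to0$. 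The gradient term is handled by the a priori bound (\ref{3.7}), hypothesis \textbf{A1}(ii) giving boundedness of $a^\varepsilon(\cdot,Du_\varepsilon)$ in $L^{p'}(Q_T)^N$, two-scale convergence (Theorem \ref{t2.1}) of a subsequence to some $\chi\in L^{p'}(Q_T\times Y\times Z)^N$, and (\ref{5.2}) for the test gradients $D\phi_\varepsilon\to Dv_0+D_yv_1$; so this term converges to $\iint\chi\cdot(Dv_0+D_yv_1)$.

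Next I would treat the large reaction term. Using the representation $g(y,\tau,r)=\Div_y G(y,\tau,r)$ with $G=D_yR$ (hypothesis \textbf{A5}), one rewrites
\begin{equation*}
\tfrac1\varepsilon g^\varepsilon(u_\varepsilon)=\Div G^\varepsilon(u_\varepsilon)-\partial_rG^\varepsilon(u_\varepsilon)\cdot Du_\varepsilon,
\end{equation*}
exactly as in the proof of Lemma \ref{l3.1}. Then $\int_{Q_T}\tfrac1\varepsilon g^\varepsilon(u_\varepsilon)\phi_\varepsilon\,dxdt=-\int_{Q_T}G^\varepsilon(u_\varepsilon)\cdot D\phi_\varepsilon\,dxdt-\int_{Q_T}(\partial_rG^\varepsilon(u_\varepsilon)\cdot Du_\varepsilon)\phi_\varepsilon\,dxdt$. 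By (\ref{3.4}) and Remark \ref{r4.1} (applied componentwise to $G$ and to $\partial_rG$, which satisfy the hypotheses of Lemma \ref{l4.4} thanks to (\ref{3.4})--(\ref{3.5})) together with the strong $L^2$ convergence $u_\varepsilon\to u_0$, one gets $G^\varepsilon(u_\varepsilon)\to G(\cdot,\cdot,u_0)$ and $\partial_rG^\varepsilon(u_\varepsilon)\to\partial_rG(\cdot,\cdot,u_0)$ two-scale, so these terms pass to the limit $-\iint G(y,\tau,u_0)\cdot(Dv_0+D_yv_1)-\iint(\partial_rG(y,\tau,u_0)\cdot(Du_0+D_yu_1))v_0$; integrating the first piece by parts in $y$ and using $\int_Y g(y,\tau,r)\,dy=0$ recovers the form $\iint g(y,\tau,u_0)v_1$ appearing in (\ref{5.3})--(\ref{5.4}). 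For the time term, the contribution of $v_0$ gives $-\int_{Q_T}\rho^\varepsilon u_\varepsilon\,\partial_tv_0\,dxdt$; since $u_\varepsilon\to u_0$ strongly in $L^2(Q_T)$ and $\rho^\varepsilon\rightharpoonup\int_Y\rho\,dy=1$ weakly-$*$ (a product-of-strong-and-weak limit, using that $\rho^\varepsilon$ does not oscillate in $t$), this converges to $-\int_{Q_T}u_0\,\partial_tv_0\,dxdt=\int_0^T(u_0',v_0)\,dt-\int_Q u^0v_0(0)$ after integrating by parts, which cancels the initial data term and leaves the parabolic term. The contribution of the $\varepsilon v_1$ part of $\phi_\varepsilon$ is where the case distinction enters: when $k=2$ it is handled by Corollary \ref{c4.1}, producing the term $-\int_{Q_T}[\rho\,\partial_\tau u_1,v_1]\,dxdt$ in (\ref{5.4}); when $0<k<2$ a scaling argument shows it vanishes (the $\partial_\tau$ piece is multiplied by $\varepsilon^{1-k}$ but in fact one tests with $v_1(x,t,x/\varepsilon,t/\varepsilon^k)$ scaled so the relevant term is $\varepsilon^{k-1}$ times a bounded quantity, hence $\to0$), and when $k>2$ one further averages in $\tau$.

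To identify $\chi$ with $a(\cdot,Du_0+D_yu_1)$ I would run the classical Minty–Browder monotonicity argument using \textbf{A1}(i): from the energy identity (\ref{3.8}), rewritten via the $G$-representation as in Lemma \ref{l3.1}, one gets $\limsup_\varepsilon\int_{Q_T}a^\varepsilon(\cdot,Du_\varepsilon)\cdot Du_\varepsilon\,dxdt\le$ (the limit of the right-hand side), and lower-semicontinuity plus the already-established limit equation let one compare with $\iint\chi\cdot(Du_0+D_yu_1)$; then for arbitrary $(v_0,v_1)\in\mathbb{F}_0^{1,p}$ one writes $0\le\iint(a(\cdot,Du_0+D_yu_1+t(Dv_0+D_yv_1))-\chi)\cdot t(Dv_0+D_yv_1)$, divides by $t$, and lets $t\to0^+$ using the continuity \textbf{A1}(iii). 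The density of $\mathcal{F}_0^\infty$ in $\mathbb{F}_0^{1,p}$ (noted after Lemma \ref{l4.2} and Proposition \ref{p4.1}) then upgrades the identity from test functions to all admissible pairs, and in the case $k=2$ the regularity $u_1\in L^p(Q_T;\mathcal{W})$ is obtained a posteriori from the equation since $\rho\,\partial_\tau u_1=$ (a term in $\mathcal{V}'$). \textbf{The main obstacle} I anticipate is the careful treatment of the weighted time derivative: justifying the passage to the limit of $\int_{Q_T}\rho^\varepsilon u_\varepsilon\,\partial_t(\varepsilon v_1^\varepsilon)\,dxdt$ and correctly tracking the powers of $\varepsilon$ across the three regimes $0<k<2$, $k=2$, $k>2$, which is exactly what Lemma \ref{l4.3} and Corollary \ref{c4.1} are designed to handle — getting the duality pairing $[\rho\,\partial_\tau u_1,v_1]$ to appear with the right sign (via the integration-by-parts formula (\ref{4.2})) is the delicate point, together with ensuring the limiting $u_1$ actually lies in $\mathcal{W}$ rather than merely $\mathcal{V}$.
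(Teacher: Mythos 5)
Your strategy for $0<k<2$ and $k=2$ is essentially the paper's: the same corrector test function $\psi_0+\varepsilon \psi_1^{\varepsilon }$, the same rewriting $\frac{1}{\varepsilon }g^{\varepsilon }(u_{\varepsilon })=\Div G^{\varepsilon }(u_{\varepsilon })-\partial _{r}G^{\varepsilon }(u_{\varepsilon })\cdot Du_{\varepsilon }$ followed by Lemma \ref{l4.4} and Remark \ref{r4.1}, the same use of Lemma \ref{l4.3} and Corollary \ref{c4.1} for the weighted time term, and a monotonicity identification of the limit flux (which the paper delegates to \cite{CPAA, AMPA} rather than writing out the Minty argument, and which likewise defers the membership $u_{1}\in L^{p}(Q_{T};\mathcal{W})$ to the cell-problem analysis of the next subsection). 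One bookkeeping remark: in the regime $0<k<2$ the $\partial _{\tau }$ contribution is $\varepsilon ^{2-k}\cdot \frac{1}{\varepsilon }\int_{Q_{T}}\rho ^{\varepsilon }u_{\varepsilon }(\partial _{\tau }\psi _{1})^{\varepsilon }dxdt$, and it is precisely Lemma \ref{l4.3} (not mere boundedness of $\int_{Q_{T}}\rho ^{\varepsilon }u_{\varepsilon }(\partial _{\tau }\psi _{1})^{\varepsilon }dxdt$, whose prefactor $\varepsilon ^{1-k}$ blows up for $k>1$) that makes the bracketed quantity $O(1)$; your parenthetical ``$\varepsilon ^{k-1}$ times a bounded quantity'' has the exponent wrong, though the intended mechanism is the right one.

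The genuine gap is the case $k>2$, which you dispose of with ``one further averages in $\tau $.'' That does not work as stated: for $k>2$ the factor $\varepsilon ^{2-k}$ in front of the (convergent, by Lemma \ref{l4.3}) quantity $\frac{1}{\varepsilon }\int_{Q_{T}}\rho ^{\varepsilon }u_{\varepsilon }(\partial _{\tau }\psi _{1})^{\varepsilon }dxdt$ diverges, so you cannot pass to the limit with a $\tau $-dependent $\psi _{1}$ at all; and averaging the limit equation in $\tau $ does not by itself show that the corrector $u_{1}$ is independent of $\tau $, which is what (\ref{5.5}) asserts ($u_{1}\in L^{p}(Q_{T};W_{\#\rho }^{1,p}(Y))$). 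The paper's Step 3 supplies the missing ingredient: one first tests with the separately scaled function $\psi _{\varepsilon }=\varepsilon ^{k-1}\psi (x,t,x/\varepsilon ,t/\varepsilon ^{k})$, $\psi =\varphi \theta \chi $; for $k>2$ the diffusion and reaction terms then vanish, while the surviving time term $\frac{1}{\varepsilon }\int_{Q_{T}}\rho ^{\varepsilon }u_{\varepsilon }(\partial _{\tau }\psi )^{\varepsilon }dxdt$ converges by Lemma \ref{l4.3} to $\iint_{Q_{T}\times Y\times Z}\rho u_{1}\partial _{\tau }\psi \,dxdtdyd\tau $, forcing $\int_{0}^{1}\rho u_{1}\partial _{\tau }\chi \,d\tau =0$ for all $\chi \in \mathcal{D}_{\text{per}}(Z)$, i.e.\ $u_{1}$ is $\tau $-independent. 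Only then may one legitimately restrict to $\tau $-independent test functions $\psi _{1}$, so that the problematic $\partial _{\tau }$ term never appears and the remaining terms average in $\tau $ to give (\ref{5.5}). You need to add this auxiliary test-function argument.
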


\begin{proof}
The proof will be done in three steps, according to the values of the
parameter $k.\bigskip $

\textit{Step 1:} \textbf{Case where} $0<k<2$. Let $\Phi =(\psi _{0},\psi
_{1})\in \mathcal{F}_{0}^{\infty }$, and define
\begin{equation*}
\Phi _{\varepsilon }(x,t)=\psi _{0}(x,t)+\varepsilon \psi _{1}\left( x,t,%
\frac{x}{\varepsilon },\frac{t}{\varepsilon ^{k}}\right) \text{, }(x,t)\in
Q_{T}\text{.}
\end{equation*}%
We recall that $\psi _{0}\in \mathcal{C}_{0}^{\infty }(Q_{T})$ and $\psi
_{1}\in \mathcal{C}_{0}^{\infty }(Q_{T})\otimes (\mathcal{D}_{\text{per}%
}(Z)\otimes \lbrack \mathcal{D}_{\#\rho }(Y)])$. Then $\Phi _{\varepsilon
}\in \mathcal{C}_{0}^{\infty }(Q_{T})$ and, using it as a test function in
the variational formulation of (\ref{1.1}), we get
\begin{equation}
-\int_{Q_{T}}\rho ^{\varepsilon }u_{\varepsilon }\frac{\partial \Phi
_{\varepsilon }}{\partial t}dxdt+\int_{Q_{T}}a^{\varepsilon }(\cdot
,Du_{\varepsilon })\cdot D\Phi _{\varepsilon }dxdt-\frac{1}{\varepsilon }%
\int_{Q_{T}}g^{\varepsilon }(u_{\varepsilon })\Phi _{\varepsilon }dxdt=0.
\label{5.6}
\end{equation}%
We consider the terms in (\ref{5.6}) respectively.

As regards the first term on the left-hand side of (\ref{5.6}), we have
\begin{eqnarray*}
\int_{Q_{T}}\rho ^{\varepsilon }u_{\varepsilon }\frac{\partial \Phi
_{\varepsilon }}{\partial t}dxdt &=&\int_{Q_{T}}\rho ^{\varepsilon
}u_{\varepsilon }\frac{\partial \psi _{0}}{\partial t}dxdt+\varepsilon
\int_{Q_{T}}\rho ^{\varepsilon }u_{\varepsilon }\left( \frac{\partial \psi
_{1}}{\partial t}\right) ^{\varepsilon }dxdt \\
&&+\varepsilon ^{2-k}\int_{Q_{T}}\frac{1}{\varepsilon }\rho ^{\varepsilon
}u_{\varepsilon }\left( \frac{\partial \psi _{1}}{\partial \tau }\right)
^{\varepsilon }dxdt.
\end{eqnarray*}%
In view of Lemma \ref{l4.3}, the integral $\int_{Q_{T}}\frac{1}{\varepsilon }%
\rho ^{\varepsilon }u_{\varepsilon }\left( \frac{\partial \psi _{1}}{%
\partial \tau }\right) ^{\varepsilon }dxdt$ converges (recall that $%
M_{y}(\rho \frac{\partial \psi _{1}}{\partial \tau })=0$). On the other
hand, since $\rho ^{\varepsilon }\rightarrow \int_{Y}\rho dy=1$ in $L^{2}(Q)$%
-weak and $u_{\varepsilon }\rightarrow u_{0}$ in $L^{2}(Q_{T})$, it is
immediate that
\begin{equation*}
\int_{Q_{T}}\rho ^{\varepsilon }u_{\varepsilon }\frac{\partial \psi _{0}}{%
\partial t}dxdt\rightarrow \int_{Q_{T}}u_{0}\frac{\partial \psi _{0}}{%
\partial t}dxdt\text{ as }E^{\prime }\ni \varepsilon \rightarrow 0\text{.}
\end{equation*}%
We are led to
\begin{equation*}
\int_{Q_{T}}\rho ^{\varepsilon }u_{\varepsilon }\frac{\partial \Phi
_{\varepsilon }}{\partial t}dxdt\rightarrow -\int_{0}^{T}\left(
u_{0}^{\prime }(t),\psi _{0}(\cdot ,t)\right) dt.
\end{equation*}%
As far as the second term on the left-hand side of (\ref{5.6}) is concerned,
due to the monotonicity of the function $a(x,t,y,\tau ,\cdot )$ we can argue
as in \cite{CPAA} (see also \cite{AMPA}) to get
\begin{equation*}
\int_{Q_{T}}a^{\varepsilon }(\cdot ,Du_{\varepsilon })\cdot D\Phi
_{\varepsilon }dxdt\rightarrow \iint_{Q_{T}\times Y\times Z}a(\cdot
,Du_{0}+D_{y}u_{1})\cdot (D\psi _{0}+D_{y}\psi _{1})dxdtdyd\tau .
\end{equation*}%
Finally, for the last term, we have
\begin{equation*}
\frac{1}{\varepsilon }\int_{Q_{T}}g^{\varepsilon }(u_{\varepsilon })\Phi
_{\varepsilon }dxdt=\frac{1}{\varepsilon }\int_{Q_{T}}g^{\varepsilon
}(u_{\varepsilon })\psi _{0}dxdt+\int_{Q_{T}}g^{\varepsilon }(u_{\varepsilon
})\psi _{1}^{\varepsilon }dxdt.
\end{equation*}%
It is immediate that
\begin{equation*}
\int_{Q_{T}}g^{\varepsilon }(u_{\varepsilon })\psi _{1}^{\varepsilon
}dxdt\rightarrow \iint_{Q_{T}\times Y\times Z}g(u_{0})\psi _{1}dxdtdyd\tau .
\end{equation*}%
For $\frac{1}{\varepsilon }\int_{Q_{T}}g^{\varepsilon }(u_{\varepsilon
})\psi _{0}dxdt$, we use the decomposition
\begin{equation*}
\frac{1}{\varepsilon }g^{\varepsilon }(u_{\varepsilon })=\Div G^{\varepsilon
}(u_{\varepsilon })-\partial _{r}G^{\varepsilon }(u_{\varepsilon })\cdot
Du_{\varepsilon }
\end{equation*}%
to get
\begin{eqnarray*}
\frac{1}{\varepsilon }\int_{Q_{T}}g^{\varepsilon }(u_{\varepsilon })\Phi
_{\varepsilon }dxdt &=&-\int_{Q_{T}}G^{\varepsilon }(u_{\varepsilon })\cdot
D\psi _{0}dxdt-\int_{Q_{T}}(\partial _{r}G^{\varepsilon }(u_{\varepsilon
})\cdot Du_{\varepsilon })\psi _{0}dxdt \\
&=&I_{1}+I_{2}.
\end{eqnarray*}%
We infer from Lemma \ref{l4.4} that
\begin{equation*}
I_{1}\rightarrow -\iint_{Q_{T}\times Y\times Z}G(u_{0})\cdot D\psi
_{0}dxdtdyd\tau .
\end{equation*}%
Since the function $\partial _{r}G$ is Lipschitz continuous with respect to $%
r$ and periodic with respect to $y,\tau $, the use of Remark \ref{r4.1}
yields
\begin{equation*}
I_{2}\rightarrow -\iint_{Q_{T}\times Y\times Z}(\partial _{r}G(u_{0})\cdot
(Du_{0}+D_{y}u_{1}))\psi _{0}dxdtdyd\tau ;
\end{equation*}%
indeed, this can be verified by using the definition of the strong two-scale
convergence \cite{Allaire, Zhikov}, noting that in our case, the sequence $%
\partial _{r}G^{\varepsilon }(u_{\varepsilon })$ strongly two-scale
converges towards $\partial _{r}G(u_{0})$.

Putting together all the above facts we are led to (\ref{5.3}).\bigskip

\textit{Step 2:} \textbf{Case where} $k=2$. In this case the procedure is
the same as in the previous one. Thus, as it can be seen from the proof of
the case $0<k<2$, we will only deal with the term $\int_{Q_{T}}\rho
^{\varepsilon }u_{\varepsilon }\frac{\partial \Phi _{\varepsilon }}{\partial
t}dxdt$. However, another important fact is to check that $u_{1}$ belongs to
$L^{p}(Q_{T};\mathcal{W})$. This last part will be accomplished in the next
subsection. Returning to (\ref{5.6}) and considering the first term there,
we pass to the limit in the equality
\begin{equation*}
\int_{Q_{T}}\rho ^{\varepsilon }u_{\varepsilon }\frac{\partial \Phi
_{\varepsilon }}{\partial t}dxdt=\int_{Q_{T}}\rho ^{\varepsilon
}u_{\varepsilon }\frac{\partial \psi _{0}}{\partial t}dxdt+\int_{Q_{T}}%
\varepsilon \rho ^{\varepsilon }u_{\varepsilon }\frac{\partial \psi
_{1}^{\varepsilon }}{\partial t}dxdt
\end{equation*}%
using Corollary \ref{c4.1} and we get
\begin{equation*}
\lim_{E^{\prime }\ni \varepsilon \rightarrow 0}\int_{Q_{T}}\rho
^{\varepsilon }u_{\varepsilon }\frac{\partial \Phi _{\varepsilon }}{\partial
t}dxdt=-\int_{0}^{T}\left( u_{0}^{\prime }(t),\psi _{0}(\cdot ,t)\right)
dt-\int_{Q_{T}}\left[ \rho \frac{\partial u_{1}}{\partial \tau },\psi _{1}%
\right] dxdt,
\end{equation*}%
and we hence derive (\ref{5.4}).\bigskip

\textit{Step 3:} \textbf{Case where} $k>2$. As in the preceding step, we
only need to compute the limit (as $E^{\prime }\ni \varepsilon \rightarrow 0$%
) of the term $\int_{Q_{T}}\rho ^{\varepsilon }u_{\varepsilon }\frac{%
\partial \Phi _{\varepsilon }}{\partial t}dxdt$. Before we can do this, we
must first show that the corrector term $u_{1}$ does not depend on $\tau $.
This will allow us to take the test functions independent of $\tau $, that
is, $\psi _{1}\in \mathcal{C}_{0}^{\infty }(Q_{T})\otimes \lbrack \mathcal{D}%
_{\#\rho }(Y)]$, i.e., $\Phi _{\varepsilon }(x,t)=\psi _{0}(x,t)+\varepsilon
\psi _{1}(x,t,x/\varepsilon )$. This will therefore lead at once to
\begin{equation*}
\int_{Q_{T}}\rho ^{\varepsilon }u_{\varepsilon }\frac{\partial \Phi
_{\varepsilon }}{\partial t}dxdt\rightarrow -\int_{0}^{T}\left(
u_{0}^{\prime }(t),\psi _{0}(\cdot ,t)\right) dt\text{ as }E^{\prime }\ni
\varepsilon \rightarrow 0\text{.}
\end{equation*}%
So, let
\begin{equation*}
\psi _{\varepsilon }(x,t)=\varepsilon ^{k-1}\psi \left( x,t,\frac{x}{%
\varepsilon },\frac{t}{\varepsilon ^{k}}\right) \text{, }(x,t)\in Q_{T}\text{%
,}
\end{equation*}%
where $\psi (x,t,y,\tau )=\varphi (x,t)\theta (y)\chi (\tau )$ with $\varphi
\in \mathcal{C}_{0}^{\infty }(Q_{T})$, $\theta \in \mathcal{D}_{\#\rho }(Y)$
and $\chi \in \mathcal{D}_{\text{per}}(Z)$. Then $\psi _{\varepsilon }\in
\mathcal{C}_{0}^{\infty }(Q_{T})$ and as in (\ref{5.6}) we have
\begin{equation}
-\int_{Q_{T}}\rho ^{\varepsilon }u_{\varepsilon }\frac{\partial \psi
_{\varepsilon }}{\partial t}dxdt+\int_{Q_{T}}a^{\varepsilon }(\cdot
,Du_{\varepsilon })\cdot D\psi _{\varepsilon }dxdt-\frac{1}{\varepsilon }%
\int_{Q_{T}}g^{\varepsilon }(u_{\varepsilon })\psi _{\varepsilon }dxdt=0.
\label{5.7}
\end{equation}%
Because $k>2$, the last two terms in the left-hand side of (\ref{5.7}) go to
zero as $E^{\prime }\ni \varepsilon \rightarrow 0$. For the first one we
have
\begin{equation*}
\int_{Q_{T}}\rho ^{\varepsilon }u_{\varepsilon }\frac{\partial \psi
_{\varepsilon }}{\partial t}dxdt=\varepsilon ^{k}\int_{Q_{T}}\frac{1}{%
\varepsilon }\rho ^{\varepsilon }u_{\varepsilon }\left( \frac{\partial \psi
}{\partial t}\right) ^{\varepsilon }dxdt+\int_{Q_{T}}\frac{1}{\varepsilon }%
\rho ^{\varepsilon }u_{\varepsilon }\left( \frac{\partial \psi }{\partial
\tau }\right) ^{\varepsilon }dxdt.
\end{equation*}%
Passing to the limit in the above equation using Lemma \ref{l4.3} gives in (%
\ref{5.7})
\begin{equation*}
\iint_{Q_{T}\times Y\times Z}\rho u_{1}\frac{\partial \psi }{\partial \tau }%
dxdtdyd\tau =0,
\end{equation*}%
and using the arbitrariness of $\varphi $ and $\theta $, we get
\begin{equation*}
\int_{0}^{1}\rho (y)u_{1}(x,t,y,\tau )\frac{\partial \chi }{\partial \tau }%
(\tau )d\tau =0,
\end{equation*}%
which is equivalent to $u_{1}$ is independent of $\tau $. This ends the
proof of Step 3. We are partially done (since we need to check that $u_{1}$,
in the case $k=2$, lies in $L^{p}(Q_{T};\mathcal{W})$).
\end{proof}

\subsection{Homogenized equation}

In this section we consider each of the Eq. (\ref{5.3})-(\ref{5.5})
separately. Let us first and foremost deal with (\ref{5.3}).

Equation (\ref{5.3}) is equivalent to the following system:

\begin{equation}
\left\{
\begin{array}{l}
\iint_{Q_{T}\times Y\times Z}a(\cdot ,Du_{0}+D_{y}u_{1})\cdot
D_{y}v_{1}dxdtdyd\tau \\
=\iint_{Q_{T}\times Y\times Z}g(y,\tau ,u_{0})v_{1}dxdtdyd\tau \text{\ for
all }v_{1}\in \mathcal{C}_{0}^{\infty }(Q_{T})\otimes \mathcal{E}%
\end{array}%
\right.  \label{5.8}
\end{equation}%
and
\begin{equation}
\left\{
\begin{array}{l}
\int_{0}^{T}\left( u_{0}^{\prime },v_{0}\right) dxdt+\iint_{Q_{T}\times
Y\times Z}a(\cdot ,Du_{0}+D_{y}u_{1})\cdot Dv_{0}dxdtdyd\tau \\
+\iint_{Q_{T}\times Y\times Z}G(y,\tau ,u_{0})\cdot Dv_{0}dxdtdyd\tau \\
+\iint_{Q_{T}\times Y\times Z}\left( \partial _{r}G(y,\tau ,u_{0})\cdot
(Du_{0}+D_{y}u_{1})\right) v_{0}dxdtdyd\tau =0 \\
\text{for all }v_{0}\in \mathcal{C}_{0}^{\infty }(Q_{T})\text{.}%
\end{array}%
\right.  \label{5.9}
\end{equation}%
As far as (\ref{5.8}) is concerned, let $(x,t)\in Q_{T}$ and let $(r,\xi
)\in \mathbb{R}\times \mathbb{R}^{N}$ be freely fixed. Let $\pi (x,t,r,\xi )$
be defined by the so-called cell problem
\begin{equation}
\left\{
\begin{array}{l}
\pi (x,t,r,\xi )\in \mathcal{V}=L_{\text{per}}^{p}(Z;W_{\#\rho }^{1,p}(Y)):
\\
\int_{Y\times Z}a(\cdot ,\xi +D_{y}\pi (x,t,r,\xi ))\cdot D_{y}wdyd\tau
=\int_{Y\times Z}g(y,\tau ,r)wdyd\tau \\
\text{for all }w\in \mathcal{V}.%
\end{array}%
\right.  \label{5.10}
\end{equation}%
Since $g(y,\tau ,r)=\Div_{y}G(y,\tau ,r)$, we have
\begin{equation*}
\int_{Y\times Z}g(y,\tau ,r)wdyd\tau =-\int_{Y\times Z}G(y,\tau ,r)\cdot
D_{y}wdyd\tau ,
\end{equation*}%
from which we deduce that the right-hand side of (\ref{5.10}) is a
continuous linear functional on $\mathcal{V}$. It therefore follows from
classical results that Eq. (\ref{5.10}) admits at least a solution. Moreover
if $\pi _{1}\equiv \pi _{1}(x,t,r,\xi )$ and $\pi _{2}\equiv \pi
_{2}(x,t,r,\xi )$ are two solutions of (\ref{5.10}), then we must have
\begin{equation*}
\int_{Y\times Z}\left( a(\cdot ,r,\xi +D_{y}\pi _{1})-a(\cdot ,r,\xi
+D_{y}\pi _{2})\right) \cdot \left( D_{y}\pi _{1}-D_{y}\pi _{2}\right)
dyd\tau =0,
\end{equation*}%
and so, by [part (i) of] (\ref{3.3}),$\;D_{y}\pi _{1}=D_{y}\pi _{2}$, which
means that $\pi _{1}-\pi _{2}$ is a constant function of $y$. \ But then by
the condition $M_{y}(\rho \pi _{1})=M_{y}(\rho \pi _{2})=0$ (recall that $%
\pi _{1}$ and $\pi _{2}$ are in $\mathcal{V}=L_{\text{per}}^{p}(Z;W_{\#\rho
}^{1,p}(Y))$) we deduce that $\pi _{1}=\pi _{2}$. Next, taking in particular
$r=u_{0}(x,t)$ and $\xi =Du_{0}(x,t)$ with $(x,t)$ arbitrarily chosen in $%
Q_{T}$, and then choosing in (\ref{5.8}) the particular test functions $%
v_{1}(x,t)=\varphi (x,t)w$ ($(x,t)\in Q_{T}$) with $\varphi \in \mathcal{C}%
_{0}^{\infty }(Q_{T})$ and $w\in \mathcal{E}$, and finally comparing the
resulting equation with (\ref{5.10}) (note that $\mathcal{E}$ is dense in $%
\mathcal{V}$), the uniqueness of the solution to (\ref{5.10}) tells us that $%
u_{1}=\pi (\cdot ,u_{0},Du_{0})$, where the right-hand side of the preceding
equality stands for the function $(x,t)\mapsto \pi
(x,t,u_{0}(x,t),Du_{0}(x,t))$ from $Q_{T}$ into $\mathcal{V}$.

We have just proved the

\begin{proposition}
\label{p5.2}The solution of the variational problem \emph{(\ref{5.8})} is
unique.
\end{proposition}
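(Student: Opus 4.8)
The plan is a direct strict-monotonicity argument, using that the right-hand side of (\ref{5.8}) does not involve $u_1$. Suppose $u_1$ and $\bar u_1$ are two elements of $L^p(Q_T;\mathcal{V})$ that both satisfy (\ref{5.8}) with the same $u_0$. Subtracting the two identities, the term carrying $g$ cancels and one is left with
\[
\int_{Q_T\times Y\times Z}\bigl(a(\cdot,Du_0+D_yu_1)-a(\cdot,Du_0+D_y\bar u_1)\bigr)\cdot D_yv_1\,dxdtdyd\tau=0
\]
for every $v_1\in\mathcal{C}_0^\infty(Q_T)\otimes\mathcal{E}$.

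First I would extend this identity to all $v_1\in L^p(Q_T;\mathcal{V})$ by density of $\mathcal{C}_0^\infty(Q_T)\otimes\mathcal{E}$ in $L^p(Q_T;\mathcal{V})$ (the density noted just after the definition of $\mathbb{F}_0^{1,p}$, itself a consequence of Lemma \ref{l4.2}). The extension is legitimate because the left-hand side is a continuous linear functional of $v_1$: by the growth estimate (ii) of (\ref{3.3}), and since $Du_0+D_yu_1$ and $Du_0+D_y\bar u_1$ lie in $L^p(Q_T\times Y\times Z)^N$, the difference $a(\cdot,Du_0+D_yu_1)-a(\cdot,Du_0+D_y\bar u_1)$ belongs to $L^{p'}(Q_T\times Y\times Z)^N$, so its pairing against $D_yv_1\in L^p$ is well controlled. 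Taking then the admissible choice $v_1=u_1-\bar u_1$ gives
\[
\int_{Q_T\times Y\times Z}\bigl(a(\cdot,Du_0+D_yu_1)-a(\cdot,Du_0+D_y\bar u_1)\bigr)\cdot(D_yu_1-D_y\bar u_1)\,dxdtdyd\tau=0.
\]

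Finally, by part (i) of (\ref{3.3}) the integrand is bounded below by $c_1|D_yu_1-D_y\bar u_1|^p\ge 0$ almost everywhere, hence it vanishes a.e., so $D_yu_1=D_y\bar u_1$ a.e. in $Q_T\times Y\times Z$. Consequently $u_1-\bar u_1$ is, for a.e.\ $(x,t,\tau)$, constant in $y$; but $u_1(x,t,\cdot,\tau)$ and $\bar u_1(x,t,\cdot,\tau)$ both belong to $W^{1,p}_{\#\rho}(Y)$, so $\int_Y\rho(y)(u_1-\bar u_1)(x,t,y,\tau)\,dy=0$, and since $\int_Y\rho\,dy=1\ne 0$ this forces $u_1-\bar u_1=0$. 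Thus $u_1=\bar u_1$ in $L^p(Q_T;\mathcal{V})$, consistent with the identification $u_1=\pi(\cdot,u_0,Du_0)$ already made before the statement.

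I do not expect a genuine obstacle: this is the standard uniqueness proof for a strictly monotone operator. The two points deserving a line of care are (a) the density/continuity step that justifies inserting $v_1=u_1-\bar u_1$, and (b) the observation that it is precisely the normalization $\int_Y\rho u_1\,dy=0$ encoded in $\mathcal{V}$ — and not merely the determination of $D_yu_1$ — that promotes ``equal gradients'' to ``equal functions'', which is why $W^{1,p}_{\#\rho}(Y)$ rather than $W^{1,p}_{\text{per}}(Y)$ is the correct space here.
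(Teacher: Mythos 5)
Your argument is correct, and its engine is the same as the paper's: the strict monotonicity in part (i) of (\ref{3.3}) forces the two gradients $D_{y}u_{1}$ and $D_{y}\bar{u}_{1}$ to coincide, and the normalization $\int_{Y}\rho\,u\,dy=0$ built into $W_{\#\rho }^{1,p}(Y)$ then upgrades equality of gradients to equality of functions. The organization differs, though. You subtract the two instances of (\ref{5.8}) globally, extend the resulting identity from $\mathcal{C}_{0}^{\infty }(Q_{T})\otimes \mathcal{E}$ to $L^{p}(Q_{T};\mathcal{V})$ by the density noted after the definition of $\mathbb{F}_{0}^{1,p}$, and test with $v_{1}=u_{1}-\bar{u}_{1}$; this is a clean, self-contained uniqueness proof, and your justification of the density/continuity step via the growth bound (ii) of (\ref{3.3}) is sound. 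The paper instead localizes: it introduces the cell problem (\ref{5.10}) for fixed parameters $(x,t,r,\xi )$, proves existence and uniqueness of its solution $\pi (x,t,r,\xi )$ by exactly the monotonicity-plus-normalization argument you use, and then identifies $u_{1}=\pi (\cdot ,u_{0},Du_{0})$ by comparing (\ref{5.8}) with (\ref{5.10}) on separated test functions $v_{1}=\varphi (x,t)w$. What the paper's detour buys is the representation formula $u_{1}=\pi (\cdot ,u_{0},Du_{0})$ itself, which is not merely a by-product but the ingredient needed immediately afterwards to define the homogenized coefficients $q$ and $q_{0}$ and to derive the limit equation (\ref{5.11}); your route gives uniqueness more directly but would still require the cell-problem construction for that later step.
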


Let us now deal with the variational problem (\ref{5.9}). For that, set
\begin{equation*}
q(x,t,r,\xi )=\int_{Y\times Z}a(x,t,\cdot ,\cdot ,\xi +D_{y}\pi (x,t,r,\xi
))dyd\tau \text{ }
\end{equation*}%
and
\begin{equation*}
q_{0}(x,t,r,\xi )=\int_{Y\times Z}\partial _{r}g(y,\tau ,r)\pi (x,t,r,\xi
)dyd\tau
\end{equation*}%
for $(x,t)\in Q_{T}$ and $(r,\xi )\in \mathbb{R}\times \mathbb{R}^{N}$
arbitrarily fixed. With this in mind, we have following

\begin{proposition}
\label{p5.3}The solution $u_{0}$ to the variational problem \emph{(\ref{5.9})%
} solves the following boundary value problem:
\begin{equation}
\left\{
\begin{array}{l}
\frac{\partial u_{0}}{\partial t}=\Div q(\cdot ,\cdot
,u_{0},Du_{0})+q_{0}(\cdot ,\cdot ,u_{0},Du_{0})\text{\ in }Q_{T} \\
u_{0}=0\text{\ on }\partial Q\times (0,T) \\
u_{0}(x,0)=u^{0}(x)\text{\ in }Q\text{.}%
\end{array}%
\right.  \label{5.11}
\end{equation}%
Moreover any subsequential limit in $L^{2}(Q_{T})$ of the sequence $%
(u_{\varepsilon })_{\varepsilon >0}$ is solution to \emph{(\ref{5.11})}.
\end{proposition}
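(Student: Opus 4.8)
The plan is to insert the representation of the corrector obtained just before Proposition~\ref{p5.2}, namely $u_{1}=\pi(\cdot,u_{0},Du_{0})$, into the variational identity (\ref{5.9}), carry out the integration in the fast variables $(y,\tau)$, and verify that the four terms collapse precisely onto the distributional form of the equation in (\ref{5.11}); the lateral boundary condition and the initial datum are then recovered separately.

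First I would treat the term $\iint_{Q_{T}\times Y\times Z}a(\cdot,Du_{0}+D_{y}u_{1})\cdot Dv_{0}$. Since $v_{0}=v_{0}(x,t)$ does not depend on $(y,\tau)$ and $u_{1}=\pi(\cdot,u_{0},Du_{0})$, integrating first over $Y\times Z$ turns this term into $\int_{Q_{T}}q(\cdot,\cdot,u_{0},Du_{0})\cdot Dv_{0}\,dx\,dt$ by the very definition of $q$. For the term $\iint_{Q_{T}\times Y\times Z}G(y,\tau,u_{0})\cdot Dv_{0}$ I would observe that for each fixed $(x,t)$ the number $r=u_{0}(x,t)$ is frozen and $\int_{Y}G(y,\tau,r)\,dy=\int_{Y}D_{y}R(y,\tau,r)\,dy=0$ by $Y$-periodicity of $R(\cdot,\tau,r)$, so that this whole term vanishes.

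The decisive manipulation concerns the last term $\iint_{Q_{T}\times Y\times Z}\big(\partial_{r}G(y,\tau,u_{0})\cdot(Du_{0}+D_{y}u_{1})\big)v_{0}$. Writing $\partial_{r}G=D_{y}(\partial_{r}R)$ and splitting $Du_{0}+D_{y}u_{1}=Du_{0}+D_{y}\pi$, the part carrying $Du_{0}$ integrates to zero over $Y$ (again the $Y$-average of a $y$-gradient), while in the part carrying $D_{y}\pi$ I would integrate by parts in $y$ (no boundary contribution by periodicity) to obtain $-\int_{Y}\Delta_{y}(\partial_{r}R)\,\pi\,dy=-\int_{Y}\partial_{r}g(y,\tau,r)\,\pi\,dy$, where I used that differentiating $\Delta_{y}R(\cdot,\cdot,r)=g(\cdot,\cdot,r)$ in $r$ gives $\Delta_{y}(\partial_{r}R)=\partial_{r}g$; this is legitimate because $R(\cdot,\tau,r)$ is twice $y$-differentiable (hypothesis \textbf{A5}) and, by elliptic regularity, so is $\partial_{r}R(\cdot,\tau,r)$, while $\pi(\cdot,\tau)\in W^{1,p}_{\#\rho}(Y)$. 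Integrating in $(y,\tau)$ and recalling the definition of $q_{0}$, this term becomes $-\int_{Q_{T}}q_{0}(\cdot,\cdot,u_{0},Du_{0})\,v_{0}\,dx\,dt$. Collecting the surviving contributions, (\ref{5.9}) reads $\int_{0}^{T}(u_{0}^{\prime},v_{0})\,dt+\int_{Q_{T}}q(\cdot,\cdot,u_{0},Du_{0})\cdot Dv_{0}\,dx\,dt=\int_{Q_{T}}q_{0}(\cdot,\cdot,u_{0},Du_{0})\,v_{0}\,dx\,dt$ for all $v_{0}\in\mathcal{C}_{0}^{\infty}(Q_{T})$, which is exactly $\partial u_{0}/\partial t=\Div q(\cdot,\cdot,u_{0},Du_{0})+q_{0}(\cdot,\cdot,u_{0},Du_{0})$ in $\mathcal{D}^{\prime}(Q_{T})$. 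Growth condition (\ref{3.3})(ii) together with the a priori control $\|D_{y}\pi(\cdot,\cdot,u_{0},Du_{0})\|_{L^{p}(Y\times Z)}\le C(1+|u_{0}|+|Du_{0}|)$ coming from coercivity in the cell problem (\ref{5.10}) (and measurability in $(x,t)$ from the continuity of $(x,t,r,\xi)\mapsto\pi(x,t,r,\xi)$, itself a consequence of the modulus-of-continuity part of (\ref{3.3})) show that $q(\cdot,\cdot,u_{0},Du_{0})\in L^{p^{\prime}}(Q_{T})^{N}$ and $q_{0}(\cdot,\cdot,u_{0},Du_{0})\in L^{p^{\prime}}(Q_{T})$, so $u_{0}^{\prime}\in L^{p^{\prime}}(0,T;W^{-1,p^{\prime}}(Q))$.

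Finally, the lateral condition $u_{0}=0$ on $\partial Q\times(0,T)$ is built into $u_{0}\in L^{p}(0,T;W_{0}^{1,p}(Q))$; and since $u_{0}^{\prime}\in L^{p^{\prime}}(0,T;W^{-1,p^{\prime}}(Q))$, $u_{0}$ has a representative in $\mathcal{C}([0,T];L^{2}(Q))$, so $u_{0}(\cdot,0)$ is meaningful. To identify it with $u^{0}$ I would repeat the passage to the limit of Theorem~\ref{t5.1} with a test function $v_{0}\in\mathcal{C}^{\infty}([0,T];\mathcal{C}_{0}^{\infty}(Q))$ satisfying $v_{0}(\cdot,T)=0$ but $v_{0}(\cdot,0)\neq0$: the weak formulation of (\ref{1.1}) then carries the extra term $\int_{Q}\rho^{\varepsilon}u^{0}v_{0}(\cdot,0)\,dx$, which converges to $\int_{Q}u^{0}v_{0}(\cdot,0)\,dx$ because $\rho^{\varepsilon}\rightarrow1$ in $L^{2}(Q)$-weak, and comparison with the weak form of (\ref{5.11}) carrying $\int_{Q}u_{0}(\cdot,0)v_{0}(\cdot,0)\,dx$ forces $u_{0}(\cdot,0)=u^{0}$. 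Thus $u_{0}$ solves (\ref{5.11}). The last assertion is then immediate: by Proposition~\ref{p3.1} every subsequence of $(u_{\varepsilon})_{\varepsilon>0}$ has a further subsequence converging in $L^{2}(Q_{T})$, and any such limit plays the role of $u_{0}$ in Theorem~\ref{t5.1}, hence satisfies (\ref{5.11}). I expect the only genuinely delicate points to be the integration-by-parts identity producing $q_{0}$ (it rests on the regularity of $R$ and on differentiating the corrector equation in $r$) and the recovery of the initial datum, since (\ref{5.9}) by itself only encodes the equation in the interior of $Q_{T}$.
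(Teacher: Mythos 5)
Your proposal is correct and, at its core, follows the same route as the paper: substitute $u_{1}=\pi(\cdot,u_{0},Du_{0})$ into (\ref{5.9}), exploit the potential structure $g=\Div_{y}G$ with $G=D_{y}R$, and integrate by parts in $y$ to convert $\iint(\partial_{r}G\cdot D_{y}u_{1})v_{0}$ into $-\iint\partial_{r}g\,u_{1}v_{0}$, which is exactly how $q_{0}$ appears. The one place where you genuinely diverge is the disposal of the terms $\iint G(y,\tau,u_{0})\cdot Dv_{0}$ and $\iint(\partial_{r}G(y,\tau,u_{0})\cdot Du_{0})v_{0}$: the paper cancels them against each other via the identity $-\iint G(y,\tau ,u_{0})\cdot Dv_{0}=\iint(\partial_{r}G(y,\tau,u_{0})\cdot Du_{0})v_{0}$ (an integration by parts in $x$, using $\Div_{x}\bigl[G(y,\tau,u_{0}(x,t))\bigr]=\partial_{r}G\cdot Du_{0}$), whereas you observe that each term vanishes separately because $G=D_{y}R$ and $\partial_{r}G=D_{y}(\partial_{r}R)$ are $y$-gradients of $Y$-periodic functions and hence have zero mean over $Y$. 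Both arguments are valid and yield the same reduced equation; yours is arguably more elementary since it avoids applying a chain rule in $x$ to the Sobolev function $u_{0}$, at the mild cost of invoking the periodicity and regularity of $\partial_{r}R$ (which hypothesis \textbf{A5} does provide). You also supply two items the paper leaves implicit: the integrability of $q(\cdot,\cdot,u_{0},Du_{0})$ and $q_{0}(\cdot,\cdot,u_{0},Du_{0})$, and the recovery of the initial datum $u_{0}(\cdot,0)=u^{0}$ by re-running the limit passage with test functions not vanishing at $t=0$; the paper simply declares the reduced identity to be ``the variational formulation of (\ref{5.11})'' and never addresses the initial condition, so your added care here is a genuine improvement rather than a deviation.
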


\begin{proof}
Substituting $u_{1}=\pi (\cdot ,u_{0},Du_{0})$ in (\ref{5.9}) and using the
obvious equalities
\begin{equation*}
-\iint_{Q_{T}\times Y\times Z}G(y,\tau ,u_{0})\cdot Dv_{0}dxdtdyd\tau
=\iint_{Q_{T}\times Y\times Z}(\partial _{r}G(y,\tau ,u_{0})\cdot
Du_{0})v_{0}dxdtdyd\tau ,
\end{equation*}%
\begin{equation*}
-\iint_{Q_{T}\times Y\times Z}(\partial _{r}G(y,\tau ,u_{0})\cdot
D_{y}u_{1})v_{0}dxdtdyd\tau =\iint_{Q_{T}\times Y\times Z}\partial
_{r}g(y,\tau ,u_{0})u_{1}v_{0}dxdtdyd\tau ,
\end{equation*}%
Eq. (\ref{5.9}) becomes
\begin{equation*}
\left\{
\begin{array}{l}
\int_{0}^{T}\left( u_{0}^{\prime },v_{0}\right) dxdt+\iint_{Q_{T}\times
Y\times Z}a(\cdot ,Du_{0}+D_{y}\pi (\cdot ,u_{0},Du_{0}))\cdot
Dv_{0}dxdtdyd\tau \\
=\iint_{Q_{T}\times Y\times Z}\partial _{r}g(y,\tau ,u_{0})\pi (\cdot
,u_{0},Du_{0})v_{0}dxdtdyd\tau \text{ for all }v_{0}\in \mathcal{C}%
_{0}^{\infty }(Q_{T})\text{,}%
\end{array}%
\right.
\end{equation*}%
which is nothing else, but the variational formulation of (\ref{5.11}).
\end{proof}

To conclude the study in the case when $0<k<2$, we have the following

\begin{theorem}
\label{t5.2}Let $2\leq p<\infty $. Assume hypotheses \textbf{A1}-\textbf{A5}
hold. For each $\varepsilon >0$ let $u_{\varepsilon }$ be the unique
solution to \emph{(\ref{1.1})}. Then there exists a subsequence of $%
\varepsilon $ not relabeled such that $u_{\varepsilon }\rightarrow u_{0}$ in
$L^{2}(Q_{T})$ where $u_{0}\in L^{p}(0,T;W_{0}^{1,p}(Q))$ is solution to
\emph{(\ref{5.11})}.
\end{theorem}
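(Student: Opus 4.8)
The statement is essentially a synthesis of the results of Sections 3--5 specialized to the range $0<k<2$, so the plan is to assemble them in the right order rather than to prove anything genuinely new. First I would invoke Proposition \ref{p3.1}: since $(u_{\varepsilon })_{\varepsilon >0}$ is relatively compact in $L^{2}(Q_{T})$, there exist an ordinary sequence $E$ of positive numbers tending to $0$, a subsequence $E^{\prime }\subset E$, and a function $u_{0}\in L^{2}(Q_{T})$ with $u_{\varepsilon }\rightarrow u_{0}$ in $L^{2}(Q_{T})$ along $E^{\prime }$. The a priori bounds (\ref{3.6})--(\ref{3.7}) of Lemma \ref{l3.1} show that $(u_{\varepsilon })$ is bounded in $L^{p}(0,T;W_{0}^{1,p}(Q))$; by reflexivity, after a further (not relabeled) extraction it converges weakly in that space, and its weak limit necessarily coincides with the strong $L^{2}(Q_{T})$-limit $u_{0}$, so that $u_{0}\in L^{p}(0,T;W_{0}^{1,p}(Q))$.

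Next I would apply Theorem \ref{t2.3} to $(u_{\varepsilon })_{\varepsilon \in E^{\prime }}$: refining $E^{\prime }$ once more, I obtain $u_{1}\in L^{p}(Q_{T}\times Z;W_{\#\rho }^{1,p}(Y))$ realizing the two-scale gradient convergence (\ref{5.2}), i.e.\ exactly the data (\ref{5.1})--(\ref{5.2}) under which the later sections operate. With $(u_{0},u_{1})$ fixed, Theorem \ref{t5.1} in the case $0<k<2$ states that $(u_{0},u_{1})$ solves the variational problem (\ref{5.3}). Testing (\ref{5.3}) against couples of the form $(0,v_{1})$ and then $(v_{0},0)$ decouples it into the two relations (\ref{5.8}) and (\ref{5.9}). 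Proposition \ref{p5.2}, through the cell problem (\ref{5.10}) together with the strict monotonicity in part (i) of (\ref{3.3}) and the normalization $M_{y}(\rho\,\cdot)=0$, identifies $u_{1}$ uniquely as $u_{1}=\pi (\cdot ,u_{0},Du_{0})$. Substituting this into (\ref{5.9}) and simplifying via the elementary identities used in Proposition \ref{p5.3} turns (\ref{5.9}) into the variational formulation of the homogenized problem (\ref{5.11}) with the effective data $q$ and $q_{0}$. The homogeneous lateral condition $u_{0}=0$ on $\partial Q\times (0,T)$ is contained in the membership $u_{0}\in L^{p}(0,T;W_{0}^{1,p}(Q))$, while the initial condition $u_{0}(x,0)=u^{0}(x)$ is recovered exactly as in Proposition \ref{p5.3}, by passing to the limit in the weak formulation of (\ref{1.1}) against test functions that do not vanish at $t=0$.

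I do not expect a genuine obstacle here; the one point requiring a little care is the bookkeeping of the successive (and always admissible) subsequence extractions --- strong $L^{2}(Q_{T})$ compactness, then weak $L^{p}(0,T;W_{0}^{1,p}(Q))$ convergence, then the two-scale refinement of Theorem \ref{t2.3} --- and the verification that the weak $L^{p}(0,T;W_{0}^{1,p}(Q))$-limit agrees with $u_{0}$, both of which are routine. All the substantive work, namely the limit passage producing (\ref{5.3}), the well-posedness of the cell problem, and the elimination of the corrector $u_{1}$, has already been carried out in Theorem \ref{t5.1} and Propositions \ref{p5.2} and \ref{p5.3}.
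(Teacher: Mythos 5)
Your proposal is correct and coincides with the paper's own (implicit) treatment: Theorem \ref{t5.2} is stated there without a separate proof, precisely because it is the concatenation of Proposition \ref{p3.1}, Lemma \ref{l3.1}, Theorem \ref{t2.3}, Theorem \ref{t5.1} (case $0<k<2$), and Propositions \ref{p5.2}--\ref{p5.3}, which is exactly the chain you assemble. Your added care about identifying the weak $L^{p}(0,T;W_{0}^{1,p}(Q))$-limit with the strong $L^{2}(Q_{T})$-limit and about tracking the subsequence extractions is sound and, if anything, slightly more explicit than the paper.
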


The case when $k>2$ is quite similar to that when $0<k<2$. Now, let us
consider the case where $k=2$. In that case, all we need to check is that
the solution $u_{1}$ of the microscopic problem is unique and belongs to $%
L^{p}(Q_{T};\mathcal{W})$ as announced in Theorem \ref{t5.1}. For that
purpose, we begin by checking that $u_{1}$ is the solution to the following
variational problem:
\begin{equation}
\left\{
\begin{array}{l}
\int_{Q_{T}}\left[ \rho \frac{\partial u_{1}}{\partial \tau },v_{1}\right]
dxdt+\iint_{Q_{T}\times Y\times Z}a(\cdot ,Du_{0}+D_{y}u_{1})\cdot
D_{y}v_{1}dxdtdyd\tau \\
=\iint_{Q_{T}\times Y\times Z}g(y,\tau ,u_{0})v_{1}dxdtdyd\tau \text{\ for
all }v_{1}\in \mathcal{C}_{0}^{\infty }(Q_{T})\otimes \mathcal{E}\text{.}%
\end{array}%
\right.  \label{5.12}
\end{equation}%
Fix $(r,\xi )\in \mathbb{R}\times \mathbb{R}^{N}$ and $(x,t)\in Q_{T}$, and
consider the cell problem
\begin{equation}
\left\{
\begin{array}{l}
\pi \equiv \pi (x,t,r,\xi )\in \mathcal{V}=L_{\text{per}}^{p}(Z;W_{\#\rho
}^{1,p}(Y)) \\
\left[ \rho \frac{\partial \pi }{\partial \tau },w\right] +\int_{Y\times
Z}a(\cdot ,\xi +D_{y}\pi )\cdot D_{y}wdyd\tau =\int_{Y\times Z}g(y,\tau
,r)wdyd\tau \\
\text{for all }w\in \mathcal{E}=\mathcal{D}_{\text{per}}(Z)\otimes \mathcal{D%
}_{\#\rho }(Y)\text{.}%
\end{array}%
\right.  \label{5.13}
\end{equation}%
Assume for a while that the solution of (\ref{5.13}) exists. Then, for the
same reasons as in the case where $0<k<2$, the linear functional $L:w\mapsto
\int_{Y\times Z}g(y,\tau ,r)wdyd\tau $ defined on $\mathcal{V}$ verifies the
property: there is a positive constant $c$ independent of $w$ such that
\begin{equation*}
\left\vert L(w)\right\vert \leq c\left\Vert w\right\Vert _{\mathcal{V}}\text{
for all }w\in \mathcal{V}.
\end{equation*}%
Likewise there exists another constant $k>0$ such that
\begin{equation*}
\left\vert \int_{Y\times Z}a(\cdot ,\xi +D_{y}\pi )\cdot D_{y}wdyd\tau
\right\vert \leq k\left\Vert w\right\Vert _{\mathcal{V}}.
\end{equation*}%
We deduce from the above facts that the linear functional $w\mapsto \left[
\rho \frac{\partial \pi }{\partial \tau },w\right] $, defined on $\mathcal{E}
$, is continuous when endowing $\mathcal{E}$ with the $\mathcal{V}$-norm.
From the density of $\mathcal{E}$ in $\mathcal{V}$ we get readily $\rho
\frac{\partial \pi }{\partial \tau }\in \mathcal{V}^{\prime }$, so that $\pi
$ lies in $\mathcal{W}$. Since $\mathcal{E}$ in $\mathcal{W}$, Eq. (\ref%
{5.13}) still holds for $w\in \mathcal{W}$. Therefore by equality (\ref{4.2}%
) (in Proposition \ref{p4.1}) we deduce that $\left[ \rho \frac{\partial \pi
}{\partial \tau },\pi \right] =0$. The uniqueness of the solution of (\ref%
{5.13}) follows from that. So it remains to show that Eq. (\ref{5.13})
possesses at least a solution. But this equation is the variational
formulation of the following equation:
\begin{equation}
\left\{
\begin{array}{l}
\pi \in \mathcal{W}: \\
\rho \frac{\partial \pi }{\partial \tau }=\Div_{y}a(\cdot ,\xi +D_{y}\pi
)+g(\cdot ,\cdot ,r).%
\end{array}%
\right.  \label{5.14}
\end{equation}%
In view of the properties of the operator $\mathcal{R}$ defined in Section
4, we see immediately by \cite{Paronetto} (see also \cite{Alt}) that the
above equation admits at least a solution. Now, taking $r=u_{0}(x,t)$ and $%
\xi =Du_{0}(x,t)$ and arguing as in the case where $0<k<2$, we obtain $%
u_{1}=\pi (\cdot ,\cdot ,u_{0},Du_{0})$. By the preceding equality, we have
shown, as claimed, that $u_{1}$ lies in $L^{p}(Q_{T};\mathcal{W})$, thereby
concluding the proof of Theorem \ref{t5.1}. This also shows that even in
this case, the homogenized equation still has the form (\ref{5.11}).

\subsection{Some uniqueness results and convergence of the sequence $(u_{%
\protect\varepsilon })_{\protect\varepsilon >0}$}

In order to find a uniqueness result for the solution of the problem (\ref%
{5.11}), we need to know the properties of the homogenized coefficients. The
properties of the function $q$ are classically known. However it is
difficult to have the general properties of the function $q_{0}$. But we
will nevertheless show that in some cases, there is uniqueness. For this, we
will restrict the study to a special case: we assume that the function $%
\lambda \mapsto a(x,t,y,\tau ,\lambda )$, from $\mathbb{R}^{N}$ into itself
is linear, that is, there exists a family $\{a_{ij}\}_{1\leq i,j\leq
N}\subset \mathcal{C}(\overline{Q}_{T};L^{\infty }(\mathbb{R}_{y,\tau
}^{N+1}))$ (thanks to (\ref{3.1}) and parts (ii) and (iii) of (\ref{3.3})),
such that
\begin{equation*}
a_{i}(x,t,y,\tau ,\lambda )=\sum_{j=1}^{N}a_{ij}(x,t,y,\tau )\lambda _{j}%
\text{ for all }\lambda \in \mathbb{R}^{N}\text{ (}1\leq i\leq N\text{).}
\end{equation*}%
In the sequel, we assume $p=2$. It is clear that the results obtained in the
preceding sections are still valid in this case. From the periodicity
assumption on $a(x,t,\cdot ,\cdot ,\lambda )$, it is clear that the
functions $a_{ij}(x,t,\cdot ,\cdot )$ are $Y\times Z$-periodic.

Set $b=(a_{ij})_{1\leq i,j\leq N}$ (the matrix derived from the coefficients
$a_{ij}$) and let us focused our attention on the special case where $k=2$,
which seems to be the more involved. The cell problem (\ref{5.14}) takes the
form
\begin{equation}
\left\{
\begin{array}{l}
\pi \equiv \pi (x,t,r,\xi )\in \mathcal{W}: \\
\rho \frac{\partial \pi }{\partial \tau }=\Div_{y}\left( b(x,t,\cdot ,\cdot
)(\xi +D_{y}\pi )\right) +g(\cdot ,\cdot ,r).%
\end{array}%
\right.  \label{5.15}
\end{equation}%
We know that this equation has a unique solution. But it can be easily seen
that the solution of the above equation expresses under the form
\begin{equation}
\pi (x,t,r,\xi )(y,\tau )=\chi (x,t,y,\tau )\cdot \xi +w_{1}(x,t,y,\tau ,r)
\label{5.16}
\end{equation}%
where $\chi $ and $w_{1}$ are respective unique solutions to the following
equations

\begin{equation*}
\rho \frac{\partial \chi }{\partial \tau }-\Div_{y}(b(x,t)D_{y}\chi )=\Div%
_{y}b\text{ in }\mathcal{W}^{\prime },\;\chi =\chi (x,t,\cdot ,\cdot )\in (%
\mathcal{W})^{N},
\end{equation*}%
and
\begin{equation*}
\rho \frac{\partial w_{1}}{\partial \tau }-\Div_{y}(b(x,t)D_{y}w_{1})=g(%
\cdot ,\cdot ,r)\text{ in }\mathcal{W}^{\prime },\;w_{1}=w_{1}(x,t,\cdot
,\cdot ,r)\in \mathcal{W}
\end{equation*}%
where $b(x,t)$ stands for the matrix $(a_{ij}(x,t,\cdot ,\cdot ))_{1\leq
i,j\leq N}$. The existence and uniqueness of $\chi $ and $w_{1}$ is ensured
by a classical result \cite{Paronetto}.

Now, taking $r=u_{0}(x,t)$ and $\xi =Du_{0}(x,t)$ in (\ref{5.15}), it
follows from (\ref{5.16}) that
\begin{equation*}
u_{1}(x,t,y,\tau )=\chi (x,t,y,\tau )\cdot Du_{0}(x,t)+w_{1}(x,t,y,\tau
,u_{0}(x,t)).
\end{equation*}%
Now, going back to the variational formulation of (\ref{5.4}) with the
function $u_{1}$ replaced by the above expression, we end up with
\begin{equation*}
\left\{
\begin{array}{l}
\int_{0}^{T}\left( u_{0}^{\prime },v_{0}\right) dxdt+\int_{Q_{T}}(\widehat{b}%
(x,t)Du_{0})\cdot Dv_{0}dxdt \\
+\iint_{Q_{T}\times Y\times Z}b(x,t)D_{y}w_{1}(x,t,u_{0})\cdot
Dv_{0}dxdtdyd\tau \\
=\iint_{Q_{T}\times Y\times Z}\partial _{r}g(y,\tau ,u_{0})(\chi (x,t,y,\tau
)\cdot Du_{0}(x,t))dxdtdyd\tau \\
+\iint_{Q_{T}\times Y\times Z}\partial _{r}g(y,\tau ,u_{0})w_{1}(x,t,y,\tau
,u_{0}(x,t))v_{0}dxdtdyd\tau \text{ for all }v_{0}\in \mathcal{C}%
_{0}^{\infty }(Q_{T})\text{,}%
\end{array}%
\right.
\end{equation*}%
where $\widehat{b}(x,t)=\int_{Y\times Z}b(x,t)(I+D_{y}\chi )dyd\tau $ is the
homogenized matrix, $I$ being denoting the unit $N\times N$ matrix. Setting
\begin{eqnarray*}
F_{1}(x,t,r) &=&\int_{Y\times Z}b(x,t)D_{y}w_{1}(x,t,y,\tau ,r))dyd\tau
\text{;} \\
F_{2}(x,t,r) &=&\int_{Y\times Z}\partial _{r}g(y,\tau ,r)\chi (x,t,y,\tau
)dyd\tau \text{;} \\
F_{3}(x,t,r) &=&\int_{Y\times Z}\partial _{r}g(y,\tau ,r)w_{1}(x,t,y,\tau
,r)dyd\tau \text{,}
\end{eqnarray*}%
we are led to the following result.

\begin{proposition}
\label{p5.4}The solution $u_{0}$ to the variational problem \emph{(\ref{5.4})%
} solves the following boundary value problem:
\begin{equation}
\left\{
\begin{array}{l}
\frac{\partial u_{0}}{\partial t}=\Div\left( \widehat{b}(x,t))Du_{0}\right) +%
\Div F_{1}(x,t,u_{0})-F_{2}(x,t,u_{0})\cdot Du_{0}-F_{3}(x,t,u_{0})\text{\
in }Q_{T} \\
u_{0}=0\text{\ \ on }\partial Q\times (0,T) \\
u_{0}(x,0)=u^{0}(x)\text{\ \ in }Q\text{.}%
\end{array}%
\right.  \label{5.17}
\end{equation}
\end{proposition}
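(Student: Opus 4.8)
The plan is to recognise the variational identity displayed just before Proposition~\ref{p5.4} as the variational formulation of the initial--boundary value problem (\ref{5.17}), and then to read off from it the PDE, the Dirichlet condition and the initial condition separately; recall that throughout this part $a$ is linear and $p=2$.

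First, for completeness I would recall how that identity is produced. Starting from the variational problem (\ref{5.4}) for the pair $(u_0,u_1)$ and inserting the representation $u_1(x,t,y,\tau)=\chi(x,t,y,\tau)\cdot Du_0(x,t)+w_1(x,t,y,\tau,u_0(x,t))$ given by (\ref{5.16}) --- legitimate because the cell problem (\ref{5.15}) is linear and uniquely solvable, so its solution depends affinely on $(\xi,r)$ through the decoupled problems defining $\chi$ and $w_1$ --- one simplifies the terms containing $G$ and $\partial_r G$ exactly as in the proof of Proposition~\ref{p5.3}, using $-\iint G(y,\tau,u_0)\cdot Dv_0=\iint(\partial_r G(y,\tau,u_0)\cdot Du_0)v_0$ and the $y$-integration by parts $\int_Y\partial_r G(y,\tau,u_0)\cdot D_yu_1\,dy=-\int_Y\partial_r g(y,\tau,u_0)u_1\,dy$ (which rests on $\Div_y\partial_r G=\partial_r g$ and $Y$-periodicity). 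Since $a(\cdot,Du_0+D_yu_1)=b(x,t)(I+D_y\chi)Du_0+b(x,t)D_yw_1(\cdot,u_0)$, integration in $(y,\tau)$ turns the diffusion term into $(\widehat b(x,t)Du_0+F_1(x,t,u_0))\cdot Dv_0$ and the reaction contribution into $(F_2(x,t,u_0)\cdot Du_0+F_3(x,t,u_0))v_0$, which is precisely the displayed identity.

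After substituting the definitions of $F_1,F_2,F_3$, that identity reads: for all $v_0\in\mathcal C_0^\infty(Q_T)$,
\[
\int_{Q_T}u_0'\,v_0\,dxdt+\int_{Q_T}\bigl(\widehat b(x,t)Du_0+F_1(x,t,u_0)\bigr)\cdot Dv_0\,dxdt=\int_{Q_T}\bigl(F_2(x,t,u_0)\cdot Du_0+F_3(x,t,u_0)\bigr)v_0\,dxdt .
\]
An integration by parts in the space variable (the boundary term vanishes since $v_0$ is compactly supported in $Q_T$), together with the fundamental lemma of the calculus of variations and the density of $\mathcal C_0^\infty(Q_T)$, then shows that $u_0$ satisfies the equation of (\ref{5.17}) in the sense of distributions on $Q_T$; here one uses that $\widehat b,F_1,F_2,F_3$ are bounded with at most linear growth in their last argument --- a consequence of (\ref{3.4})--(\ref{3.5}) and of the classical bounds for the solutions $\chi,w_1$ of the weighted parabolic cell problems --- so that every term, as well as $u_0'$, lies in $L^2(0,T;H^{-1}(Q))$. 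The homogeneous Dirichlet condition $u_0=0$ on $\partial Q\times(0,T)$ is already contained in the membership $u_0\in L^p(0,T;W_0^{1,p}(Q))$. For the initial condition, I would return to the $\varepsilon$-problem (\ref{5.6}) with a test function $\Phi_\varepsilon=\psi_0+\varepsilon\psi_1^\varepsilon$ in which $\psi_0\in\mathcal C^\infty(\overline{Q_T})$ is now allowed to be nonzero at $t=0$ while still vanishing at $t=T$; integrating by parts in $t$ produces the term $\int_Q\rho^\varepsilon u^0\psi_0(\cdot,0)\,dx$, which converges to $\int_Q u^0\psi_0(\cdot,0)\,dx$ because $\rho^\varepsilon\rightharpoonup 1$ and $u^0\psi_0(\cdot,0)\in L^2(Q)$, while on the limit side the term $\int_Qu_0(\cdot,0)\psi_0(\cdot,0)\,dx$ appears --- the trace $u_0(\cdot,0)$ being well defined thanks to the uniform bound $\sup_{0\le t\le T}|u_\varepsilon(t)|_{L^2}\le C$ of Lemma~\ref{l3.1} and the regularity $u_\varepsilon\in\mathcal C(0,T;L^2(Q))$. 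Comparing with the PDE already established isolates $\int_Qu_0(\cdot,0)\psi_0(\cdot,0)\,dx=\int_Qu^0\psi_0(\cdot,0)\,dx$ for all admissible $\psi_0$, hence $u_0(\cdot,0)=u^0$.

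The main obstacle is not a single sharp estimate but two points requiring care. The first is making the affine decomposition (\ref{5.16}) and the definitions of $\widehat b,F_1,F_2,F_3$ fully rigorous: one must check that $\chi(x,t,\cdot,\cdot)$ and $w_1(x,t,\cdot,\cdot,r)$ depend measurably on $(x,t)$ (and on $r$) and obey uniform bounds strong enough for the growth assertions used in the integration-by-parts step; this rests on the classical existence--uniqueness theory for the weighted parabolic cell problems \cite{Paronetto}, applied with $(x,t,r)$ as parameters, together with (\ref{3.4})--(\ref{3.5}). The second, slightly beyond routine, is handling the weight $\rho^\varepsilon$ when passing to the limit in the time-derivative term --- already encountered in Theorem~\ref{t5.1}, and again in the recovery of the initial datum --- which is exactly where the uniform control of Lemma~\ref{l3.1} and the compactness furnished by \cite[Theorem 2.3]{Amar} are indispensable.
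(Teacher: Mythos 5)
Your proposal is correct and follows essentially the same route as the paper: the paper's "proof" of Proposition \ref{p5.4} is precisely the derivation displayed just before its statement, namely substituting the affine decomposition (\ref{5.16}) of the cell solution into the macroscopic part of (\ref{5.4}), simplifying the $G$ and $\partial _{r}G$ terms as in Proposition \ref{p5.3}, and reading off $\widehat{b},F_{1},F_{2},F_{3}$ so that the resulting identity is the weak formulation of (\ref{5.17}). Your additional remarks on the distributional interpretation, the recovery of the initial datum, and the measurable dependence of $\chi ,w_{1}$ on the parameters are routine supplements the paper leaves implicit, not a different method.
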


As in \cite{AllPiat1}, it can be checked straightforwardly that the
functions $F_{i}(x,t,\cdot )$ ($1\leq i\leq 3$) are Lipschitz continuous
functions. This therefore ensures the uniqueness of the solution to (\ref%
{5.17}), and the following result holds true.

\begin{theorem}
\label{t5.3}Assume hypotheses \textbf{A1}-\textbf{A5} hold with $p=2$. For
each $\varepsilon >0$ let $u_{\varepsilon }$ be the unique solution to \emph{%
(\ref{1.1})}. Then $u_{\varepsilon }\rightarrow u_{0}$ in $L^{2}(Q_{T})$ as $%
\varepsilon \rightarrow 0$, where $u_{0}\in L^{2}(0,T;H_{0}^{1}(Q))$ is the
unique solution to \emph{(\ref{5.17})}.
\end{theorem}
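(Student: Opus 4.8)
The strategy is to promote the subsequential convergence already available into convergence of the whole sequence $(u_\varepsilon)_{\varepsilon>0}$; the only ingredient not yet at hand is a uniqueness statement for the homogenized problem (\ref{5.17}). So the plan is: (i) take an arbitrary sequence, extract along it a subsequence converging in $L^2(Q_T)$, and identify the limit as a solution of (\ref{5.17}); (ii) prove that (\ref{5.17}) has at most one solution in the natural class; and (iii) conclude by relative compactness.

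\emph{Step (i).} Let $E$ be any sequence of positive reals tending to $0$. By Proposition \ref{p3.1}, $(u_\varepsilon)_{\varepsilon\in E}$ is relatively compact in $L^2(Q_T)$, so there are a subsequence $E'\subset E$ and $u_0\in L^2(0,T;H_0^1(Q))$ with $u_\varepsilon\to u_0$ in $L^2(Q_T)$ along $E'$. Combining Theorem \ref{t5.1} with the analysis carried out above (with $p=2$ and $a$ linear; in the case $k=2$ via the decomposition (\ref{5.16}), and analogously for $0<k<2$ and $k>2$), the corrector $u_1$ is uniquely determined and $u_0$ is a weak solution of a boundary value problem of the form (\ref{5.17}), with homogenized matrix $\widehat b(x,t)=\int_{Y\times Z}b(x,t)(I+D_y\chi)\,dy\,d\tau$ and Lipschitz-continuous coefficients $F_1,F_2,F_3$; see in particular Proposition \ref{p5.4}. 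Moreover $\widehat b$ is uniformly elliptic: testing the $\chi$-cell equation with $\chi\xi$ and using the skew-symmetry (\ref{4.2}) to discard the $\rho\,\partial_\tau$ term gives $\int_{Y\times Z}b(x,t)(\xi+D_y(\chi\xi))\cdot D_y(\chi\xi)\,dy\,d\tau=0$, whence $\widehat b(x,t)\xi\cdot\xi=\int_{Y\times Z}b(x,t)(\xi+D_y(\chi\xi))\cdot(\xi+D_y(\chi\xi))\,dy\,d\tau\geq c_1|\xi|^2$ by part (i) of (\ref{3.3}) and Jensen's inequality.

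\emph{Step (ii): uniqueness, the main obstacle.} Since $F_1(x,t,0)=F_3(x,t,0)=0$ and $F_2$ is bounded, the right-hand side of (\ref{5.17}) lies in $L^2(0,T;H^{-1}(Q))$ whenever $u_0\in L^2(0,T;H_0^1(Q))$; hence $u_0'\in L^2(0,T;H^{-1}(Q))$, $u_0\in\mathcal C([0,T];L^2(Q))$, the initial datum is attained, and the energy identity $\tfrac12\tfrac{d}{dt}|u_0(t)|_{L^2}^2=\langle u_0'(t),u_0(t)\rangle$ is valid. Let $u_0,\widetilde u_0$ be two solutions of (\ref{5.17}) with the same data, set $w=u_0-\widetilde u_0$, and test the difference of the equations with $w$. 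The principal term contributes $\int_Q\widehat b\,Dw\cdot Dw\,dx\geq c_1\|w\|^2$; the Lipschitz continuity of $F_1$ and $F_3$ bounds the contributions of $\Div F_1$ and of $F_3$ by $\tfrac{c_1}{2}\|w\|^2+C|w(t)|_{L^2}^2$ after Young's inequality; and the contribution of the quasilinear transport term $F_2(\cdot,u_0)\cdot Du_0-F_2(\cdot,\widetilde u_0)\cdot D\widetilde u_0$ — which carries no sign — is estimated, using the boundedness and Lipschitz continuity of $F_2$ together with the uniform ellipticity of $\widehat b$, by $\tfrac{c_1}{2}\|w\|^2+\phi_0(t)|w(t)|_{L^2}^2$ for some $\phi_0\in L^1(0,T)$. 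Collecting terms leaves
\begin{equation*}
\tfrac{d}{dt}|w(t)|_{L^2}^2\leq\phi(t)\,|w(t)|_{L^2}^2,\qquad\phi\in L^1(0,T),
\end{equation*}
and $w(0)=0$, so Gronwall's lemma forces $w\equiv0$. This energy/Gronwall scheme is exactly the one used in \cite{AllPiat1}; the estimate of the transport term, which requires a careful interpolation, is where all the work lies.

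\emph{Step (iii).} By Step (ii) the limit $u_0$ produced in Step (i) is independent of the subsequence $E'$. Hence every subsequence of $(u_\varepsilon)_{\varepsilon>0}$ has a further subsequence converging in $L^2(Q_T)$ to this same $u_0$, and, since $(u_\varepsilon)_{\varepsilon>0}$ is relatively compact in $L^2(Q_T)$ (Proposition \ref{p3.1}), the whole sequence converges: $u_\varepsilon\to u_0$ in $L^2(Q_T)$ as $\varepsilon\to0$. As $u_0\in L^2(0,T;H_0^1(Q))$ is the unique solution of (\ref{5.17}), the proof is complete.
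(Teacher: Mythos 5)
Your proposal is correct and follows essentially the same route as the paper: uniqueness of the solution to (\ref{5.17}) (which the paper asserts via the Lipschitz continuity of the $F_i$, deferring to \cite{AllPiat1}) combined with the relative compactness of Proposition \ref{p3.1} and the standard subsequence argument. You merely supply more detail than the paper does — in particular the ellipticity of $\widehat b$ and the energy/Gronwall scheme for uniqueness, including the delicate interpolation needed for the transport term, which the paper leaves implicit.
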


\begin{proof}
By the uniqueness of the solution to (\ref{5.17}), the result follows in an
obvious way.
\end{proof}

The same remark as above holds in all the other cases (as far as the
parameter $k$ is concerned), so that we are justified in saying that Theorem %
\ref{t5.3} holds for any positive value of the parameter $k$. This shows the
convergence of the sequence $(u_{\varepsilon })_{\varepsilon >0}$ when the
function $a(x,t,y,\tau ,\lambda )$ is linear with respect to $\lambda $.
Also we recover the results by Allaire and Piatnitski \cite{AllPiat1} (when
setting in our situation $k=2$) when the function $a(x,t,y,\tau ,\lambda )$
is linear and does not depend on the variable $x,t$. We can therefore argue
that our work generalize the one of the previous authors.

\begin{acknowledgement}
\emph{The research of the second author was partially supported by the
University of Pretoria and the National Research Foundation of South Africa.}
\end{acknowledgement}

\end{document}